\DeclareSymbolFontAlphabet{\mathbb}{AMSb} 
\DeclareSymbolFontAlphabet{\mathbbl}{bbold}
\theoremstyle{definition}
\newtheorem{thm}{Theorem}[section]
\newtheorem{mainthm}{Theorem}
\newtheorem{lem}[thm]{Lemma}
\newtheorem{cor}[thm]{Corollary}
\newtheorem{prop}[thm]{Proposition}
\theoremstyle{definition}
\newtheorem{rem}[thm]{Remark}
\newtheorem{dfn}[thm]{Definition}
\newtheorem{ques}[thm]{Question}
\newtheorem{construction}[thm]{Construction}
\newcommand{\bC}{\mathbb{C}}
\newcommand{\bG}{\mathbb{G}}
\newcommand{\bL}{\mathbb{L}}
\newcommand{\bN}{\mathbb{N}}
\newcommand{\bZ}{\mathbb{Z}}
\newcommand{\bQ}{\mathbb{Q}}
\newcommand{\cA}{\mathcal{A}}
\newcommand{\cE}{\mathcal{E}}
\newcommand{\cM}{\mathcal{M}}
\newcommand{\cO}{\mathcal{O}}
\newcommand{\cP}{\mathcal{P}}
\newcommand{\cQ}{\mathcal{Q}}
\newcommand{\sC}{\mathscr{C}}
\newcommand{\isom}{\stackrel{\sim}{\to}}
\newcommand{\et}{\text{\'{e}t}}
\newcommand{\rk}{\mathrm{rk}}
\newcommand{\sHom}{\mathcal{H}om}
\title{Log $p$-divisible groups associated with semi-abelian degeneration}
\author{Kentaro Inoue}
\address{Department of Mathematics, Faculty of Science, Kyoto University, Kyoto 606-8502, Japan}
\email{keninoue0123@gmail.com}
\begin{document}

\maketitle

\begin{abstract}
Kato and Zhao proved that, when an abelian variety $A$ over a complete discrete valuation field $K$ has semi-abelian degeneration $G$ over the valuation ring $\cO_{K}$ of $K$, the associated $p$-divisible group $A[p^{\infty}]$ uniquely extends to a log $p$-divisible group $A[p^{\infty}]^{\mathrm{log}}$ over  $\cO_{K}$. Here, the log $p$-divisible group $A[p^{\infty}]^{\mathrm{log}}$ captures more information than a system of quasi-finite flat group schemes $\{G[p^{n}]\}_{n\geq 1}$ over $\cO_{K}$. In this paper, we generalize the results of Kato and Zhao to the case where the base log scheme is a regular scheme equipped with the log structure defined by a normal crossings divisor.
\end{abstract}
\tableofcontents

\section{Introduction}

It is an important to understand degeneration of abelian varieties. Among such degenerating objects, semi-abelian degeneration of abelian varieties is of particular importance, as is suggested by the semi-stable reduction theorem (\cite{sga7}) and the theory of toroidal compactifications of moduli spaces of abelian varieties (\cite{fc90}). In this paper, we study the behavior of torsion subgroups of abelian varieties degenerating to semi-abelian schemes.

We begin with a simple situation. Let $K$ be a complete discrete valuation field of characteristic $0$ with valuation ring $\cO_{K}$ whose residue field $k$ is of characteristic $p>0$. Let $A$ be a semi-abelian scheme over $\cO_{K}$ with $A_{K}\coloneqq A\otimes_{\cO_{K}} K$ being an abelian variety. The associated $p$-divisible group $A_{K}[p^{\infty}]$ over $K$ degenerates to a system of quasi-finite flat group schemes $A[p^{\infty}]\coloneqq\{A[p^{n}]\coloneqq \mathrm{Ker}(\times p^{n}\colon A\to A)\}_{n\geq 1}$ over $\cO_{K}$. However, this object does not behave well because $A[p^{n}]$ is not necessarily finite flat over $\cO_{K}$ and its rank is not constant unless $A$ is an abelian scheme over $\cO_{K}$. 

As a remedy, Kato used the framework of log geometry developed in \cite{kat89} and introduced the notion of \emph{log finite group schemes} and \emph{log $p$-divisible groups} over fs log schemes (\cite{kat23}). These objects behave like usual finite flat group schemes or $p$-divisible groups, as, for example, suggested by a log version of Dieudonn\'{e} theory (\cites{kat23,ino25}). Furthermore, Kato and Zhao proved that the $p$-divisible group $A_{K}[p^{\infty}]$ over $K$ uniquely extends to a log $p$-divisible group over $\cO_{K}$ equipped with the standard log structure in \cite[Theorem 5.2]{zha21} and \cite[\S 4.3]{kat23}. 

The goal of this paper is to generalize the results of Kato and Zhao to semi-abelian schemes over higher dimensional bases. Our main theorems are as follows.

\begin{mainthm}[Theorem \ref{mainthm BT ver}]\label{mainthmb}
    Let $(X,\cM_{X})$ be an fs log scheme defined by a locally noetherian regular scheme $X$ with a normal crossings divisor $D$, and $A$ be a semi-abelian scheme over $X$. Let $U\coloneqq X-D$. Suppose that $A_{U}$ is an abelian scheme over $U$. Then the $p$-divisible group $A_{U}[p^{\infty}]$ over $U$ uniquely extends to a log $p$-divisible group $A[p^{\infty}]^{\mathrm{log}}$ over $(X,\cM_{X})$. 
\end{mainthm}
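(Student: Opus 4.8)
The overall strategy is to prove uniqueness by a density-and-purity argument reducing to the discrete valuation ring case of Kato and Zhao, and then to prove existence by constructing the extension étale-locally from degeneration data and gluing the local pieces by the uniqueness.

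\emph{Uniqueness.} Suppose $\cG_1$ and $\cG_2$ are log $p$-divisible groups over $(X,\cM_X)$ whose restrictions to $U$ are both $A_U[p^\infty]$. Since log $p$-divisible groups are sheaves for the Kummer log flat topology and $U$ is schematically dense in $X$, the restriction map $\Hom_{(X,\cM_X)}(\cG_1,\cG_2)\to\Hom_U(A_U[p^\infty],A_U[p^\infty])$ is injective, so it suffices to extend $\id_{A_U[p^\infty]}$ to a homomorphism $\cG_1\to\cG_2$, and symmetrically. A morphism of log $p$-divisible groups over the regular scheme $X$ is determined by its restrictions to the codimension $\le 1$ points; at the generic point $\eta$ of each branch of $D$ the trait $\Spec\cO_{X,\eta}$ with its induced log structure falls under \cite[Theorem~5.2]{zha21} and \cite[\S 4.3]{kat23}, which supply the extension there, and a purity argument on $X$ glues these (together with the given morphism on $U$) into a morphism over $(X,\cM_X)$.

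\emph{Existence.} By étale descent, using the uniqueness just obtained, it suffices to construct $A[p^\infty]^{\log}$ after an étale cover of $X$; so we may assume $X=\Spec R$ with $R$ regular local Noetherian and $D=V(t_1\cdots t_r)$ for part of a regular system of parameters, and, after a further reduction, complete. In this situation the theory of degeneration of abelian varieties of Mumford and Faltings--Chai (\cite{fc90}) applies to the semi-abelian scheme $A$, which is abelian over $U$: it is described by a Raynaud extension $0\to T\to\tilde G\to B\to 0$ with $T$ a torus and $B$ an abelian scheme over $X$, a lattice $Y$ (étale-locally $\bZ^{g}$ with $g=\dim T$), and a period homomorphism subject to a positivity condition; the coordinates of the period lie a priori in $\Gamma(U,\cO_X^\times)$ but, by that positivity, extend to sections of $\cM_X$ realizing the monodromy in $\cM_X/\cO_X^\times\cong\bN^{r}$. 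Over $U$ one has $A_U=\tilde G_U/Y$, and hence exact sequences
\[
0\longrightarrow\tilde G[p^{n}]\longrightarrow A_U[p^{n}]\longrightarrow Y\otimes\bZ/p^{n}\longrightarrow 0 ,
\]
where $\tilde G[p^{\infty}]$ is already a $p$-divisible group over all of $X$ (its $p^{n}$-torsion being an extension of $B[p^{n}]$ by $T[p^{n}]$, hence finite flat over $X$) and $Y\otimes\bZ/p^{n}$ is constant. The class of this extension in $\Ext^{1}_{U}(Y\otimes\bZ/p^{n},\tilde G[p^{n}])$ has a part coming from the abelian quotient $B$, which is already defined over $X$, and a part coming from the period; the latter extends because in the Kummer log flat topology the $\mu_{p^{n}}$-torsors underlying the torus contribution are governed by $\cM_X^{\mathrm{gp}}/p^{n}$ rather than $\cO_X^\times/p^{n}$, so the extended period $\in\Gamma(X,\cM_X^{\mathrm{gp}})$ yields a class in $\Ext^{1}_{\kfl}(Y\otimes\bZ/p^{n},\tilde G[p^{n}])$ lifting it. The associated extensions $A[p^{n}]^{\log}$ of log finite group schemes, assembled over $n$, are then checked to be independent of the auxiliary trivializations (so that they glue over $X$), to restrict to $A_U[p^{\infty}]$ on $U$, and to satisfy Kato's axioms for a log $p$-divisible group — each $A[p^{n}]^{\log}$ Kummer log flat and log finite over $(X,\cM_X)$, and multiplication by $p$ an epimorphism in the $\kfl$ topology — all inherited from the corresponding properties of $\mu_{p^{n}}$ and $\tilde G[p^{n}]$ through the structure of the extension. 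Uniqueness then glues these étale-local objects into the desired $A[p^{\infty}]^{\log}$ over $(X,\cM_X)$.

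\emph{Main obstacle.} I expect the heart of the matter to be the interface of the two ingredients: extracting from \cite{fc90} the degeneration data in the form needed over a higher-dimensional regular base rather than a complete discrete valuation ring — in particular that the period has coordinates in $\cM_X$ with the correct order along every branch of $D$ — and verifying that the resulting Kummer log flat extensions satisfy \emph{every} axiom of a log $p$-divisible group uniformly over $X$, not merely at the generic points of $D$. By comparison, the uniqueness statement and the concluding gluing should be comparatively formal.
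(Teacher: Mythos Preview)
Your overall strategy parallels the paper's: uniqueness via the DVR case plus a purity argument, existence via degeneration data over complete regular local rings, then gluing by uniqueness. The gap is in the phrase ``after a further reduction, complete'' and the subsequent claim that ``uniqueness then glues these \'etale-local objects''. The map $\Spec\widehat R\to\Spec R$ is not \'etale, so neither \'etale descent nor your uniqueness statement (which only controls restriction to the dense open $U$, not comparison along an fpqc cover) lets you descend an object built over $\widehat R$ back to $R$. Since Mumford--Faltings--Chai degeneration genuinely requires completeness, this step cannot be bypassed.

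The paper closes this gap by strict fpqc descent. After reducing (by Zariski localization and a limit argument) to $X=\Spec R$ with $R$ local, it first uses the DVR result at each generic point of $D$ together with spreading-out to produce $A[p^n]^{\log}$ over an open $V\supset U$ with $\mathrm{codim}_X(X\setminus V)\ge 2$. Over $X'=\Spec\widehat R$, the log $1$-motive $\cQ_{\log}$ attached to $A_{\widehat R}$ supplies $\cQ_{\log}[p^n]$, which agrees with $A[p^n]^{\log}|_{V'}$ over $V'$. The key point is then to apply purity for homomorphisms of weak log finite group schemes (Corollary~\ref{purity for hom of weak log fin grp}) to the \emph{flat but non-\'etale} morphism $X''=X'\times_X X'\to X$: this extends the tautological isomorphism over $V''$ to a descent datum $p_1^*\cQ_{\log}[p^n]\cong p_2^*\cQ_{\log}[p^n]$, and Kato's strict fpqc descent for finite Kummer log flat schemes yields the global object over $R$. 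Your ``main obstacle'' paragraph therefore misidentifies the hard part: the degeneration data over a complete regular local base are already available (Theorem~\ref{deg theory}), and the log $p$-divisible group axioms are verified by comparison with $\cQ_{\log}[p^\infty]$ after passing to completions; the substantive work is precisely the fpqc descent step you left implicit.
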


We also prove an analogue of Theorem \ref{mainthmb} for $n$-torsion points of $A_{U}$ when $n$ is invertible on the generic points of $D$. This assumption is necessary in order to ensure the uniqueness of the extension of $A_{U}[n]$.

\begin{mainthm}[Theorem \ref{mainthm fin ver}]\label{mainthma}
    Let $n\geq 1$ be an integer. Let $(X,\cM_{X})$ be an fs log scheme defined by a locally noetherian regular scheme $X$ with a normal crossings divisor $D$, and $A$ be a semi-abelian scheme over $X$. Let $U\coloneqq X\backslash D$. Suppose that $A_{U}$ is an abelian scheme over $U$ and that $D\otimes_{\bZ} \bZ[1/n]$ is dense in $D$. Then the finite flat group scheme $A_{U}[n]$ over $U$ uniquely extends to a log finite group scheme $A[n]^{\mathrm{log}}$ over $(X,\cM_{X})$.
\end{mainthm}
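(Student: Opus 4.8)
The plan is to deduce existence directly from Theorem~\ref{mainthmb} and to prove uniqueness by reducing to discrete valuation rings and then invoking a purity property of log finite group schemes. For existence I would write $n=\prod_{p\mid n}p^{e_p}$ and, for each prime $p\mid n$, apply Theorem~\ref{mainthmb} to get the log $p$-divisible group $A[p^{\infty}]^{\mathrm{log}}$ over $(X,\cM_X)$ extending $A_U[p^{\infty}]$; since a log $p$-divisible group is by construction an inductive system of log finite group schemes, $A[p^{\infty}]^{\mathrm{log}}[p^{e_p}]$ is a log finite group scheme over $(X,\cM_X)$ restricting to $A_U[p^{e_p}]$ on $U$. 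As the category of log finite group schemes is closed under finite products, $A[n]^{\mathrm{log}}\coloneqq\prod_{p\mid n}A[p^{\infty}]^{\mathrm{log}}[p^{e_p}]$ is a log finite group scheme over $(X,\cM_X)$ with $A[n]^{\mathrm{log}}|_U=\prod_{p\mid n}A_U[p^{e_p}]=A_U[n]$. The density hypothesis is not used here; it will enter only in the uniqueness argument.

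For uniqueness, let $\cG$ be a log finite group scheme over $(X,\cM_X)$ equipped with an isomorphism $\cG|_U\cong A_U[n]$; composing with the canonical isomorphism $A_U[n]\cong A[n]^{\mathrm{log}}|_U$, I must extend the resulting $\Phi\colon\cG|_U\isom A[n]^{\mathrm{log}}|_U$ to an isomorphism over all of $(X,\cM_X)$ (this extension is then automatically unique, because a morphism of log finite group schemes over an integral base is determined by its restriction to the generic point). Since isomorphisms of log finite group schemes satisfy \'etale descent, I may work \'etale-locally and hence assume $X=\Spec R$ with $R$ a regular local ring and $D=V(t_1\cdots t_r)$ for part of a regular system of parameters. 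The point at which the hypothesis enters is this: the density of $D\otimes_{\bZ}\bZ[1/n]$ in $D$ is equivalent to the statement that $n$ is invertible in $\kappa(\eta)$ --- equivalently $n\in\cO_{X,\eta}^{\times}$ --- for every generic point $\eta$ of $D$, since a generic point of $D$ lies in the closure of $D\otimes_{\bZ}\bZ[1/n]$ only if it lies in $D\otimes_{\bZ}\bZ[1/n]$ itself.

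Next I would treat the codimension-one situation. Fix a generic point $\eta$ of $D$, set $V\coloneqq\cO_{X,\eta}$ --- a discrete valuation ring with $n\in V^{\times}$, fraction field $K$ equal to the function field of $X$, and $\Spec V\times_X U=\Spec K$ --- equip $\Spec V$ with the standard log structure, and write $(\Spec V)^{\mathrm{log}}$ for the corresponding log scheme. Because $n$ is invertible on $V$, every log finite group scheme over $(\Spec V)^{\mathrm{log}}$ killed by $n$ is Kummer log \'etale --- the logarithmic analogue of the classical fact that a finite flat group scheme of order invertible on the base is \'etale --- hence is classified by a finite continuous module over $\pi_1^{\mathrm{log}}((\Spec V)^{\mathrm{log}})$. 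The natural map $\mathrm{Gal}(\overline{K}/K)\to\pi_1^{\mathrm{log}}((\Spec V)^{\mathrm{log}})$ induced by the strict open immersion $(\Spec K,\mathrm{triv})\hookrightarrow(\Spec V)^{\mathrm{log}}$ is surjective, since a connected Kummer log \'etale cover of $(\Spec V)^{\mathrm{log}}$ is a composite of an unramified extension of $V$ and a tamely ramified cover $V[\varpi^{1/m}]$ with $m\in V^{\times}$ and $\varpi$ a uniformizer, and no such cover splits over $K$. It follows that restriction along $\Spec K\hookrightarrow(\Spec V)^{\mathrm{log}}$ is fully faithful on log finite group schemes killed by $n$; applying this to $\cG|_V$ (which is killed by $n$, as this already holds over the dense point $\Spec K$) and to $A[n]^{\mathrm{log}}|_V$, the isomorphism $\Phi|_K$ lifts uniquely to an isomorphism $\Phi_{\eta}\colon\cG|_V\isom A[n]^{\mathrm{log}}|_V$ over $(\Spec V)^{\mathrm{log}}$.

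Combining this with the isomorphism $\Phi$ over $U$, the two log finite group schemes $\cG$ and $A[n]^{\mathrm{log}}$ become isomorphic over $\cO_{X,x}$ for every codimension-one point $x$ of $X$ --- over $x\in U$ via $\Phi$, over the generic points of $D$ via the previous step --- and, because each of these isomorphisms restricts over $K$ to $\Phi|_K$, they are mutually compatible. I would then finish by invoking a purity property of log finite group schemes over a regular base: such an object is recovered from its restrictions to the local rings at codimension-one points in the way a reflexive sheaf is recovered as the intersection of its codimension-one localizations, so this compatible family descends to an isomorphism $\cG\isom A[n]^{\mathrm{log}}$ over $(X,\cM_X)$ restricting to $\Phi$ over $U$. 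I expect the codimension-one step to be the conceptual heart of the proof --- it is exactly where the density hypothesis is indispensable, and without it $\cG|_V$ need not be Kummer log \'etale, allowing genuinely non-isomorphic extensions --- while the main technical point is the purity property, which should however follow from the reflexivity arguments already underlying the proof of Theorem~\ref{mainthmb}.
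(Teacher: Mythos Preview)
Your argument is correct. The uniqueness half is essentially the paper's own: the paper also extends a morphism over $U$ first to the DVR localizations at the generic points of $D$ (via Lemma~\ref{fully faithfulness dvr case}(1), which is exactly your Kummer-log-\'etale/$\pi_1^{\log}$-surjectivity step), then spreads out to an open $V\supset U$ with $\mathrm{codim}_X(X\setminus V)\ge 2$ and invokes purity for morphisms of weak log finite group schemes (Corollary~\ref{purity for hom of weak log fin grp}) to extend to all of $X$; you should make that spreading-out step explicit, since the purity corollary is phrased for restriction to an open subset rather than to a family of stalks. The genuine divergence is in existence: the paper does \emph{not} reduce to Theorem~\ref{mainthmb} but proves both theorems by the same direct method---localizing to $X=\Spec R$ with $R$ local, passing to the completion $\widehat R$, producing a log $1$-motive $\cQ_{\log}$ over $(\Spec\widehat R,\cM_{\widehat R})$ via the log degeneration equivalence (Theorem~\ref{log deg theory}), and then descending $\cQ_{\log}[n]$ along the strict fpqc cover $\Spec\widehat R\to\Spec R$, the descent datum being manufactured precisely from the purity of morphisms already established. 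Your route is cleaner once Theorem~\ref{mainthmb} is in hand, bypassing the completion and fpqc-descent machinery entirely; the paper's route keeps the finite-level statement logically independent of the $p$-divisible one---and in fact in the paper the dependence runs the other way, the proof of Theorem~\ref{mainthm BT ver} invoking the argument of Theorem~\ref{mainthm fin ver} at each finite level.
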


Let us explain the relation of our results with the theory of \emph{log abelian varieties} introduced by Kajiwara-Kato-Nakayama in \cite[Definition 4.1]{kkn08b}, which we do not use in this paper. For simplicity, we restrict to Theorem \ref{mainthmb}. Roughly speaking, a log abelian variety is a degenerating object of a usual abelian variety in the world of log geometry. In \cite[Proposition 18.1]{kkn15} and \cite[Proposition 4.5]{kat23}, Kajiwara-Kato-Nakayama proved that, for an integer $n\geq 1$ and a log abelian scheme $A^{\mathrm{log}}$ over an fs log scheme $(Y,\cM_{Y})$, the object $A^{\mathrm{log}}[n]\coloneqq \mathrm{Ker}(\times n\colon A^{\mathrm{log}}\to A^{\mathrm{log}})$ is a log finite group scheme over $(Y,\cM_{Y})$. Hence, Theorem \ref{mainthmb} follows immediately from the results of Kajiwara-kato-Nakayama when $A$ is the semi-abelian part of a log abelian scheme over $(X,\cM_{X})$ in the sense of \cite[4.4]{kkn08b}. It seems to be natural to ask the following question.

\begin{ques}\label{conj on log ab red}
     Let $(X,\cM_{X})$ and $A$ be as in Theorem \ref{mainthmb}. Then there is a unique log abelian scheme $A^{\mathrm{log}}$ over $(X,\cM_{X})$ whose semi-abelian part is isomorphic to $A$.
\end{ques}

\begin{rem}
Question \ref{conj on log ab red} is answered affirmatively in the following cases.
    \begin{enumerate}
    
    \item When $(X,\cM_{X})$ is a spectrum of a complete discrete valuation ring equipped with the standard log structure, Question \ref{conj on log ab red} is resolved affirmatively in \cite[Corollary 4.5]{kkn19}. 
    \item Let $n\geq 3$ be an integer. Let $\cA_{g,n}$ denote the moduli space of principally polarized $g$-dimensional abelian varieties with level-$n$ structures, and $\cA_{g,n}^{\Sigma}$ denote the toroidal compactification of $\cA_{g,n}$ associated with a fixed smooth cone decomposition $\Sigma$ constructed by Faltings-Chai in \cites{fc90}. We equip $\cA_{g,n}^{\Sigma}$ with the log structure $\cM_{\cA_{g,n}^{\Sigma}}$ defined by the boundary divisor. In a series of papers \cites{kkn08a,kkn08b,kkn15,kkn18,kkn19,kkn21,kkn22}, Kajiwara-Kato-Nakayama studied fundamental properties of log abelian varieties and interpreted the log scheme $(\cA_{g,n}^{\Sigma},\cM_{\cA_{g,n}^{\Sigma}})$ as the moduli space of log abelian varieties. In particular, their results imply that there exists a universal log abelian scheme $A^{\mathrm{log}}_{\mathrm{univ}}$ on $(\cA_{g,n}^{\Sigma},\cM_{\cA_{g,n}^{\Sigma}})$ such that its semi-abelian part is isomorphic to the universal semi-abelian scheme $A^{\mathrm{sab}}_{\mathrm{univ}}$. 

    Let $(X,\cM_{X})$ and $A$ be as in Theorem \ref{mainthmb}. Suppose that there is a morphism $X\to \cA_{g,n}^{\Sigma}$ such that $A$ is obtained as the pullback of $A^{\mathrm{sab}}_{\mathrm{univ}}$ along this morphism. Then it follows that Question \ref{conj on log ab red} has an affirmative answer from the functoriality of the semi-abelian parts of log abelian schemes. For example, this assumption holds for the toroidal compactification of the integral canonical model of a Shimura variety of Hodge type with hyperspecial level constructed by Lan and Madapusi in \cites{lan13,mad19}.
    \item When $X$ is a smooth variety over the complex number field $\bC$ and $A$ admits a polarization, Question \ref{conj on log ab red} is resolved affirmatively in \cite[Proposition 3.9.2]{kkn08a} by using log Hodge theory.
    \end{enumerate}
\end{rem}

\begin{rem}
    By the argument in the previous paragraph of Question \ref{conj on log ab red}, some previous works directly imply Theorem \ref{mainthmb} when Question \ref{conj on log ab red} has an affirmative answer. However, let us notice that our methods in this paper do not use the theory of log abelian varieties and is much simpler than an approach using the theory developed by Kajiwara-Kato-Nakayama, even if Question \ref{conj on log ab red} has an affirmative answer.
\end{rem}

In what follows, we explain the strategy of our proof of Theorem \ref{mainthma}. Theorem \ref{mainthmb} is proved in a similar way. We may assume that $X=\mathrm{Spec}(R)$ is affine. First, we consider the case where $R$ is a complete regular local ring. In this case, Mumford's degeneration theory (cf.~\cite{fc90}) allows us to associate a log $1$-motive with a semi-abelian degeneration $A$. Then the log finite group scheme associated with this log $1$-motive constructed in Proposition \ref{log fin ass to log 1mot}) is the desired object. In this step, we prove the following.

\begin{mainthm}[Theorem \ref{log deg theory}]\label{mainthmc}
    Let $X$ be a spectrum of a complete regular local ring and $U$ be an open subset of $X$ whose complement is a normal crossings divisor. Then there are natural equivalences of categories:
    \[
    \mathrm{DEG}(X,U)\simeq \mathrm{DD}(X,U)\simeq \mathrm{DD}^{\mathrm{log}}(X,U).
    \]
    Here, the categories $\mathrm{DEG}(X,U)$, $\mathrm{DD}(X,U)$, and $\mathrm{DD}^{\mathrm{log}}(X,U)$ are defined in Definition \ref{categories concerning degeneration}.
\end{mainthm}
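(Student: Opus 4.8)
The plan is to derive the first equivalence from Mumford--Faltings--Chai degeneration theory and the second by unwinding the definition of the log structure attached to $D$. Write $X=\Spec(R)$; since $R$ is regular it is a normal domain and, by Auslander--Buchsbaum, a unique factorization domain, so each irreducible component of the normal crossings divisor $X\setminus U$ is cut out by a prime element $g_{i}$ of $R$ ($1\leq i\leq r$). Then $U=\Spec(R[1/g])$ for $g\coloneqq g_{1}\cdots g_{r}$, the ideal $I\coloneqq(g)$ defines $X\setminus U$, and $R$, being a complete noetherian local ring, is $I$-adically complete, so $(R,I)$ satisfies the running hypotheses of \cite{fc90}. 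The first equivalence $\mathrm{DEG}(X,U)\simeq\mathrm{DD}(X,U)$ is then the content of \cite{fc90} applied to $(R,I)$: the functor sending a semi-abelian scheme over $X$ which becomes abelian over $U$ to its associated degeneration datum --- a Raynaud extension $0\to T\to\widetilde{G}\to B\to 0$ with its classifying homomorphism $c$ and the dual homomorphism $c^{\vee}$, the period lattice $Y$ and the character lattice of $T$, and a trivialization over $U$ of the pullback of the Poincar\'{e} biextension satisfying the positivity condition --- is an equivalence of categories. I would invoke its proof only to note that full faithfulness and essential surjectivity are established in \cite{fc90} (the latter by Mumford's construction of a semi-abelian scheme from a degeneration datum), and that the formulation without a fixed polarization reduces to the polarized one since an abelian scheme over the complete local ring $R$ is projective, hence polarizable.

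For the second equivalence I first identify the relevant global sections. The fs log structure $\cM_{X}$ attached to $D$ is $\cM_{X}=j_{*}\cO_{U}^{\times}\cap\cO_{X}$ for $j\colon U\hookrightarrow X$; as $X$ is regular and $D$ is reduced, its groupification is $\cM_{X}^{\mathrm{gp}}=j_{*}\cO_{U}^{\times}$, so $\Gamma(X,\cM_{X}^{\mathrm{gp}})=R[1/g]^{\times}$, and since $R$ is a UFD with $\Pic(R)=0$ the valuations along $g_{1},\dots,g_{r}$ give an isomorphism $\Gamma(X,\cM_{X}^{\mathrm{gp}}/\cO_{X}^{\times})\xrightarrow{\ \sim\ }\bigoplus_{i=1}^{r}\bZ$ carrying $\Gamma(X,\cM_{X}/\cO_{X}^{\times})$ onto $\bigoplus_{i=1}^{r}\bN$; after a finite \'etale base change --- permitted since all three categories satisfy \'etale descent --- one may assume moreover that $T$ is split and the $g_{i}$ form part of a regular system of parameters. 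By Definition \ref{categories concerning degeneration}, an object of $\mathrm{DD}^{\mathrm{log}}(X,U)$ is the same package $(B,T,\widetilde{G},c,c^{\vee},Y,\dots)$ as a classical degeneration datum, except that the trivialization is required to take values in $\cM_{X}^{\mathrm{gp}}$ rather than merely in $\cO_{U}^{\times}$, and the positivity condition is replaced by the requirement that the induced bilinear pairing between $Y$ and the character lattice of $T$, valued in $\Gamma(X,\cM_{X}^{\mathrm{gp}}/\cO_{X}^{\times})\cong\bigoplus_{i=1}^{r}\bZ$, factor through the monoid $\bigoplus_{i=1}^{r}\bN$ and be nondegenerate there. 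Restricting $\cM_{X}^{\mathrm{gp}}$-valued sections to $U$ defines a functor $\mathrm{DD}^{\mathrm{log}}(X,U)\to\mathrm{DD}(X,U)$; conversely, for a classical degeneration datum the positivity condition forces every value of the trivialization, together with its inverse, to be regular on $U$, hence to lie in $\Gamma(X,\cM_{X}^{\mathrm{gp}})$, so the trivialization lifts canonically to an $\cM_{X}^{\mathrm{gp}}$-valued one, and under the isomorphism above classical positivity becomes exactly the monoid condition. These functors are mutually quasi-inverse, whence $\mathrm{DD}(X,U)\simeq\mathrm{DD}^{\mathrm{log}}(X,U)$.

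The main obstacle is the essential surjectivity hidden in the first equivalence, combined with the bookkeeping of positivity along the several components $D_{1},\dots,D_{r}$ of $D$: one must check that an abstract degeneration datum whose positivity is imposed component by component genuinely arises from a semi-abelian scheme over all of $X$, which amounts to running Mumford's construction $I$-adically, algebraizing the resulting formal semi-abelian scheme by Grothendieck's existence theorem, and verifying that the outcome is independent of the chosen equation $g$ for $D$ and of the auxiliary polarization used to make the construction run; the hypothesis that $R$ is a complete regular local ring is what makes each of these steps available. Finally I would check that the three equivalences are natural in $(X,U)$ with respect to strict morphisms $(X',U')\to(X,U)$ of pairs of the kind considered here --- i.e.\ compatible with pullback of semi-abelian schemes, of degeneration data, and of the log structures $\cM_{X}$ --- since this naturality is precisely what lets the general cases of Theorems \ref{mainthmb} and \ref{mainthma} be reduced to the complete regular local case proved here; once the functors are in place this is a routine base-change verification, using that the formation of $\cM_{X}^{\mathrm{gp}}$, of $Y$, and of the character lattice of $T$ commutes with such pullbacks.
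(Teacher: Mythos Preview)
Your approach is essentially the paper's: the first equivalence is cited from Faltings--Chai, and the second rests on the identification $\cM_X^{\mathrm{gp}}=j_*\cO_U^\times$. The paper packages this as $G_{\mathrm{log}}|_{S_\et}\cong j_{\et,*}(G|_U)$ for a general split semi-abelian $G$ (Lemma~\ref{description of Glog}) and compares the data $(Y|_U\to G|_U)$ and $(Y\to G_{\mathrm{log}})$ directly, whereas you go through the symmetric biextension description; the content is the same.

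Two points to clean up. First, the sentence ``the positivity condition forces every value of the trivialization, together with its inverse, to be regular on $U$, hence to lie in $\Gamma(X,\cM_X^{\mathrm{gp}})$'' is misattributed: $\tau$ is already a trivialization over $U$, so its values lie in $\cO_U^\times=\Gamma(X,\cM_X^{\mathrm{gp}})$ automatically, and the equivalence $\mathrm{wDD}(X,U)\simeq\mathrm{wDD}^{\mathrm{log}}(X,U)$ holds before any positivity is imposed. Second, your description of the positivity conditions does not match Definition~\ref{categories concerning degeneration}: one does not require the full monodromy pairing $Y\times X\to\cM_X^{\mathrm{gp}}/\cO_X^\times$ to land in the monoid, only that $\langle y,\lambda^{\et}(y)\rangle\in(\cM_{S,\bar s}/\cO_{S,\bar s}^\times)\setminus\{1\}$ for every nonzero $y$, i.e.\ positivity is tested along the diagonal via the polarization $\lambda^{\et}\colon Y\to X$, and likewise on the classical side via $\nu(\tau(y,\lambda^{\et}(y)))$. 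With that corrected, matching the two conditions is precisely the identification $\mathrm{Div}^+(S',U')\cong\cM_{S,\bar s}/\cO_{S,\bar s}^\times$ you indicate, which is exactly how the paper concludes.
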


While the first equivalence is already known in \cite{fc90}, the second one is new and allows us to interpret the degeneration theory in terms of the theory of log $1$-motives. We believe that Theorem \ref{mainthmc} is well-known to experts.

Next, we treat the case where $R$ is a (not necessarily complete) discrete valuation ring. In this case, Theorem \ref{mainthma} follows from what we proved above and Beauville-Laszlo gluing (Proposition \ref{bl gluing for log fin grp}). 

Finally, we consider general cases. By the results in the previous cases and the limit argument, we obtain an extension $A_{V}[n]^{\mathrm{log}}$ of $A_{U}[n]$ to an open subset $V\subset X$ containing $U$ such that the codimension of the complement of $V$ in $X$ is at least $2$. The purity for homomorphisms of log finite group schemes (Corollary \ref{purity for hom of weak log fin grp}) and the limit argument allow us to assume that $R$ is local. Let $\widehat{R}$ be the completion of $R$. Consider a strict fpqc cover $(\mathrm{Spec}(\widehat{R}),\cM_{\widehat{R}})\to (X,\cM_{X})$. What we proved above implies that there is a log finite group scheme $A_{\widehat{R}}[n]^{\mathrm{log}}$ over $(\mathrm{Spec}(\widehat{R}),\cM_{\widehat{R}})$ extending the pullback of $A_{V}[n]^{\mathrm{log}}$, and $A_{\widehat{R}}[n]^{\mathrm{log}}$ is naturally equipped with a descent datum over $(\mathrm{Spec}(\widehat{R}\otimes_{R} \widehat{R}),\cM_{\widehat{R}\otimes_{R} \widehat{R}})$ by the purity for homomorphisms of log finite group schemes (Corollary \ref{purity for hom of weak log fin grp}). Therefore, we obtain the desired extension by strict fpqc descent for log finite group schemes (\cite[Theorem 7.1 and Theorem 8.1]{kat21}).

Let us explain the outline of this paper. In Section \ref{sec2}, we recall some basic properties of log $p$-divisible groups. In Section \ref{sec3}, we give the reinterpretation of Mumford's degeneration theory of abelian schemes in terms of log $1$-motives. In Section \ref{sec4}, we prove the main theorem (Theorem \ref{mainthm fin ver}).

\subsection*{Acknowledgements}
The author is grateful to his advisor, Tetsushi Ito, for useful discussions and warm encouragement. The author would like to thank Kazuya Kato and Peihang Wu for some helpful discussions. This work was supported by JSPS KAKENHI Grant Number 23KJ1325.

\subsection*{Notation and conventions}\noindent

\begin{itemize}
\item All rings and monoids are commutative.
\item For a monoid $P$ and an integer $n\geq 1$, let $P^{1/n}$ denote the monoid $P$ with $P\to P^{1/n}$ mapping $p$ to $p^{n}$. The colimit of $P^{1/n}$ with respect to $n\geq 1$ is denoted by $P_{\bQ_{\geq 0}}$.
\item For a log scheme $(S,\cM_{S})$ and a scheme $T$ over $S$, the pullback log structure of $\cM_{S}$ to $T$ is denoted by $\cM_{T}$ unless otherwise specified. 
\item For a site $\sC$, the associated topos with $\sC$ is denoted by $\mathrm{Shv}(\sC)$.
\end{itemize}

We refer readers to \cite{ogu18} for notation and terminologies concerning log schemes.

\section{Preliminaries on log schemes and log $p$-divisible groups}\label{sec2}

\subsection{Kfl vector bundles}

We recall some basic results on kfl topology introduced by Kato in \cite[Definition 2.3]{kat21}.

\begin{dfn}
    A monoid map $f\colon M\to N$ of fs monoids is called \emph{Kummer} if $f$ is injective and, for every $q\in N$, there exist an integer $n\geq 1$ and $p\in M$ such that $f(p)=q^{n}$. 
\end{dfn}

\begin{dfn}[{\cite[(1.10) and Definition 2.2]{kat21}}]
    Let $f\colon (X,\cM_{X})\to (Y,\cM_{Y})$ be a morphism of fs log schemes. 
    \begin{enumerate}
        \item The morphism $f$ is \emph{log flat} (resp. \emph{log \'{e}tale}) if, fppf locally on $X$ and $Y$, there exists a chart $P\to Q$ of $f$ such that the following conditions are satisfied:
        \begin{itemize}
            \item the induced map $P^{\mathrm{gp}}\to Q^{\mathrm{gp}}$ is injective (resp. injective and its cokernel is a finite abelian group with an order invertible on $X$);
            \item the induced morphism $(X,\cM_{X})\to (Y,\cM_{Y})\times_{(\bZ[P],P)^{a}} (\bZ[Q],Q)^{a}$ is strict flat (resp. strict \'{e}tale).
        \end{itemize}
        \item The morphism $f$ is \emph{Kummer} if, for each $x\in X$, the natural map $\cM_{Y,\overline{y}}/\cO_{Y,\overline{y}}^{\times}\to \cM_{X,\overline{x}}/\cO_{X,\overline{x}}^{\times}$ is Kummer, where $y\coloneqq f(x)$.
    \end{enumerate}
\end{dfn}

Let $(X,\cM_{X})_{\mathrm{kfl}}$ (resp. $(X,\cM_{X})_{\mathrm{k\et}}$) be the category of fs log schemes over $(X,\cM_{X})$ equipped with Kummer log flat topology (resp. Kummer log \'{e}tale topology) (\cite[Definition 2.3]{kat21}), called \emph{kfl topology} (resp. \emph{k\'{e}t topology}) for short. The kfl topology is subcanonical (\cite[Theorem 3.1]{kat21}). In other words, for an fs log scheme $(Z,\cM_{Z})$ over $(X,\cM_{X})$, the presheaf on $(X,\cM_{X})_{\mathrm{kfl}}$ given by $(Y,\cM_{Y})\mapsto \mathrm{Mor}_{(X,\cM_{X})}((Y,\cM_{Y}),(Z,\cM_{Z}))$ is a sheaf. In particular, we have a sheaf on $(X,\cM_{X})_{\mathrm{kfl}}$ defined by $(Y,\cM_{Y})\mapsto \Gamma(Y,\cO_{Y})$, denoted by $\cO_{(X,\cM_{X})}$. Furthermore, we define $\bG_{m,\mathrm{log}}$ as the strict \'{e}tale sheafification of the presheaf on $(X,\cM_{X})_{\mathrm{kfl}}$ given by $(Y,\cM_{Y})\mapsto \Gamma(Y,\cM_{Y})^{\mathrm{gp}}$. Then $\bG_{m,\mathrm{log}}$ is a sheaf on $(X,\cM_{X})_{\mathrm{kfl}}$ (\cite[Theorem 3.2]{kat21}).

We refer to vector bundles on the ringed site $((X,\cM_{X})_{\mathrm{kfl}},\cO_{(X,\cM_{X})})$ as \emph{kfl vector bundles} on $(X,\cM_{X})$, and the category of kfl vector bundles on $(X,\cM_{X})$ is denoted by $\mathrm{Vect}(X,\cM_{X})$. For a (usual) vector bundle $\cE$ on $X$, we define the $\cO_{(X,\cM_{X})}$-module $\iota(\cE)$ by $(Y,\cM_{Y})\mapsto \Gamma(Y,f^{*}\cE)$, where $f\colon Y\to X$ is the structure morphism. The sheaf property of $\cO_{(X,\cM_{X})}$ implies that $\iota(\cE)$ is a kfl vector bundle on $(X,\cM_{X})$. Then $\cE\mapsto \iota(\cE)$ gives a fully faithful functor 
\[
\iota\colon \mathrm{Vect}(X)\to \mathrm{Vect}_{\mathrm{kfl}}(X,\cM_{X}),
\]
where $\mathrm{Vect}(X)$ denotes the category of (usual) vector bundles on $X$. We regard $\mathrm{Vect}(X)$ as a full subcategory of $\mathrm{Vect}_{\mathrm{kfl}}(X,M_{X})$ via the functor $\iota$, and, by abuse of notation, we simply write $\cE$ for $\iota(\cE)$. For $\cE\in \mathrm{Vect}_{\mathrm{kfl}}(X,\cM_{X})$, we say that $\cE$ is \emph{classical} if $\cE$ belongs to $\mathrm{Vect}(X)$.

\begin{lem}[{\cite[Lemma 2.4]{ino23}}]\label{kfl vect bdle is classical after n-power ext}
    Let $(X,\cM_{X})$ be a quasi-compact fs log scheme and $\cE$ be a kfl vector bundle on $(X,\cM_{X})$. Suppose that we are given an fs chart $P\to \cM_{X}$. Then the pullback of $\cE$ by a kfl covering
    \[
    (X,\cM_{X})\otimes_{(\bZ[P],P)^{a}} (\bZ[P^{1/n}],P^{1/n})^{a}\to (X,\cM_{X})
    \]
    is classical for some $n\geq 1$. 
\end{lem}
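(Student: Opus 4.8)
The plan is to reduce the assertion to classical fppf descent of vector bundles, by absorbing the ``logarithmic'' part of a trivialising covering into a single standard Kummer base change along the given chart. Since $(X,\cM_{X})$ is quasi-compact and $\cE$ is, by definition of a kfl vector bundle, locally free for the kfl topology, there is a \emph{finite} kfl covering $\{(X_{i},\cM_{X_{i}})\to (X,\cM_{X})\}_{i}$ over which each pullback of $\cE$ is free, in particular classical.

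Next I would refine this covering so that its logarithmic part becomes a single standard Kummer base change. Each $(X_{i},\cM_{X_{i}})\to(X,\cM_{X})$ is Kummer log flat, so by the local structure theory of Kummer log flat morphisms over a log scheme equipped with a chart (\cite{kat21}) there is an fppf covering of $X$ over each member of which the pullback of $(X_{i},\cM_{X_{i}})$ is the composite of a strict flat morphism with the base change along $(X,\cM_{X})\to(\bZ[P],P)^{a}$ of $(\bZ[Q],Q)^{a}\to(\bZ[P],P)^{a}$ for some Kummer homomorphism $P\to Q$ of fs monoids. A Kummer homomorphism $P\to Q$ fits into a chain $P\hookrightarrow Q\hookrightarrow P^{1/m}$ as soon as $m$ annihilates the finite group $Q^{\mathrm{gp}}/P^{\mathrm{gp}}$ (using that $P$ and $Q$ are saturated), so that $(\bZ[P^{1/m}],P^{1/m})^{a}\to(\bZ[P],P)^{a}$ factors through $(\bZ[Q],Q)^{a}\to(\bZ[P],P)^{a}$. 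Since $X$ is quasi-compact, only finitely many such data $Q$ arise; choosing $n$ to be a common multiple of the associated integers $m$ and writing
\[
(X,\cM_{X})^{1/n}\coloneqq (X,\cM_{X})\otimes_{(\bZ[P],P)^{a}}(\bZ[P^{1/n}],P^{1/n})^{a}
\]
(whose chart $P^{1/n}$ is again fs), I obtain a strict fppf covering $(Z,\cM_{Z})\to (X,\cM_{X})^{1/n}$ over which the pullback $\cE^{1/n}$ of $\cE$ to $(X,\cM_{X})^{1/n}$ becomes free, hence classical.

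It then remains to prove the following: if a kfl vector bundle $\cF$ on an fs log scheme $(X',\cM_{X'})$ becomes classical after pullback along a strict fppf covering $(Z,\cM_{Z})\to (X',\cM_{X'})$, then $\cF$ is classical; applying this to $\cF=\cE^{1/n}$ over $(X',\cM_{X'})=(X,\cM_{X})^{1/n}$ completes the argument. Here one writes the pullback of $\cF$ as $\iota(\cF_{Z})$ for a genuine vector bundle $\cF_{Z}$ on $Z$. Since $\iota$ is fully faithful and compatible with pullbacks, and strict fppf coverings are kfl coverings, the canonical kfl descent datum of $\cF$ relative to $(Z,\cM_{Z})$ transports to an fppf descent datum on $\cF_{Z}$ for the covering $Z\to X'$ of schemes; classical fppf descent makes it effective and yields a vector bundle $\cF'$ on $X'$ together with an isomorphism $\iota(\cF')|_{Z}\cong \cF|_{Z}$ respecting descent data. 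As the kfl topology is subcanonical, this local isomorphism glues to $\iota(\cF')\cong\cF$, so $\cF$ is classical. The main obstacle is the second step: correctly invoking the structure theory of the Kummer log flat topology so as to confine the entire logarithmic part of a trivialising covering to one standard Kummer base change $(X,\cM_{X})^{1/n}\to(X,\cM_{X})$, after which only ordinary fppf descent of vector bundles is involved.
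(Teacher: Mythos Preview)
The paper does not prove this lemma; it merely cites \cite[Lemma~2.4]{ino23}. Your argument is correct and is essentially the standard proof one finds there: use quasi-compactness to pass to a finite trivialising kfl cover, invoke the chart description of Kummer log flat morphisms to factor each member (after strict fppf refinement on the source) through a standard Kummer base change $P\to Q$, embed each such $Q$ into some $P^{1/m}$ using saturatedness, take a common $n$, and conclude by ordinary fppf descent for vector bundles together with the full faithfulness of $\iota$. The one step that genuinely needs a precise reference is your second one---that, with the chart $P$ fixed on the base, a Kummer log flat morphism admits fppf-locally on the source a Kummer chart $P\to Q$ for which the induced morphism to the saturated fibre product is strict and flat; this is available in \cite{kat21}, and once it is granted the remainder of your outline is routine.
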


\begin{prop}[Beauville-Laszlo gluing for kfl vector bundles]\label{bl glueing for kfl vect bdle}
Let $(\mathrm{Spec}(R),\cM_{R})$ be a spectrum of a discrete valuation ring $R$ equipped with the log structure defined by the unique closed point. Let $K$ be the fraction field of $R$, $\widehat{R}$ be the completion of $R$, and $\widehat{K}$ be the fraction field of $\widehat{R}$. Then a natural functor
\[
\mathrm{Vect}_{\mathrm{kfl}}(\mathrm{Spec}(R),\cM_{R})\to \mathrm{Vect}(K)\times_{\mathrm{Vect}(\widehat{K})} \mathrm{Vect}_{\mathrm{kfl}}(\mathrm{Spec}(\widehat{R}),\cM_{\widehat{R}})
\]
is an equivalence of categories.    
\end{prop}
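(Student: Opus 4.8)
The plan is to reduce the statement to the classical Beauville--Laszlo theorem for vector bundles over the discrete valuation ring $R$, using Lemma \ref{kfl vect bdle is classical after n-power ext} to replace a kfl vector bundle by a genuine vector bundle after a Kummer base change, together with kfl descent to glue back. First I would fix notation: write $(S,\cM_S)=(\Spec(R),\cM_R)$ and $(\widehat S,\cM_{\widehat S})=(\Spec(\widehat R),\cM_{\widehat R})$, with $\eta=\Spec K$, $\widehat\eta=\Spec\widehat K$. Since $R$ is a discrete valuation ring and $\cM_R$ is the log structure of the closed point, there is a global fs chart $\bN\to\cM_R$, $1\mapsto\pi$, where $\pi$ is a uniformizer. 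For each $n\geq1$ let $R_n$ denote the ring obtained from $R$ by adjoining an $n$-th root $\pi^{1/n}$ of $\pi$ (so $(\Spec(R_n),\cM_{R_n})=(S,\cM_S)\otimes_{(\bZ[\bN],\bN)^a}(\bZ[\bN^{1/n}],\bN^{1/n})^a$), and similarly $\widehat R_n$. Note $R_n$ is again a discrete valuation ring (totally ramified of degree $n$ over $R$), $\widehat R_n$ is its completion, and the covering $(\Spec R_n,\cM_{R_n})\to(S,\cM_S)$ is a kfl covering; the same chart remains a chart on each of the four corners of the fibre square, so all constructions are compatible.

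The key steps, in order, are as follows. \emph{Step 1 (full faithfulness).} The functor is the natural restriction functor; full faithfulness amounts to showing that for kfl vector bundles $\cE,\cF$ on $(S,\cM_S)$ the sequence
\[
\Hom(\cE,\cF)\to\Hom(\cE_\eta,\cF_\eta)\times\Hom(\cE_{\widehat S},\cF_{\widehat S})\rightrightarrows\Hom(\cE_{\widehat\eta},\cF_{\widehat\eta})
\]
is exact. Writing $\cH om(\cE,\cF)$, which is again a kfl vector bundle, this reduces to the $n=0$ case of the claim: the global sections of a kfl vector bundle $\cH$ on $(S,\cM_S)$ satisfy $\Gamma(S,\cH)=\Gamma(\eta,\cH_\eta)\times_{\Gamma(\widehat\eta,\cH_{\widehat\eta})}\Gamma(\widehat S,\cH_{\widehat S})$. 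This in turn follows from the classical statement $R=K\times_{\widehat K}\widehat R$ by descending along the kfl covering $\Spec R_n\to S$ that makes $\cH$ classical (Lemma \ref{kfl vect bdle is classical after n-power ext}): over $R_n$ one has the classical fibre-product identity for the vector bundle $\cH|_{R_n}$, and then one takes invariants under the Kummer covering, using that kfl cohomology descent holds and that $R_n\to R$, $\widehat R_n\to\widehat R$ are compatible with completion and localization.

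\emph{Step 2 (essential surjectivity).} Given a triple $(\cE_\eta,\cE_{\widehat S},\varphi)$ with $\varphi\colon\cE_\eta|_{\widehat\eta}\isom\cE_{\widehat S}|_{\widehat\eta}$, by Lemma \ref{kfl vect bdle is classical after n-power ext} applied to the quasi-compact log scheme $(\widehat S,\cM_{\widehat S})$ there is $n\geq1$ such that $\cE_{\widehat S}|_{\widehat R_n}$ is classical, i.e. a genuine vector bundle on $\Spec\widehat R_n$; enlarging $n$ we may also assume $\cE_\eta$ becomes a classical vector bundle over $K_n:=K(\pi^{1/n})$ (this is automatic since $\eta$ has trivial log structure, so $\cE_\eta$ is already classical — but the base change to $K_n$ is still harmless and keeps the gluing data compatible). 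Over the discrete valuation ring $R_n$, with fraction field $K_n$ and completion $\widehat R_n$ (fraction field $\widehat K_n$), classical Beauville--Laszlo gives a vector bundle $\cE_n$ on $\Spec R_n$ restricting to $\cE_\eta|_{K_n}$ and $\cE_{\widehat S}|_{\widehat R_n}$ compatibly with $\varphi$. Viewing $\cE_n$ as a (classical, hence) kfl vector bundle on $(\Spec R_n,\cM_{R_n})$, it remains to equip it with a descent datum for the kfl covering $(\Spec R_n,\cM_{R_n})\to(S,\cM_S)$. The descent datum over $(\Spec(R_n\otimes_R R_n),\cM)$ is produced by gluing: on the two generic fibres of $\Spec(R_n\otimes_R R_n)$ the descent datum comes from $\cE_\eta$ (which lives on $\eta$ and so has a canonical descent datum), on the completed part from the descent datum of $\cE_{\widehat S}$, and these agree over the overlap by the compatibility in Step 1 (full faithfulness, now applied over $R_n\otimes_R R_n$ and its completion — which is again a product of discrete valuation rings, so the same fibre-product description applies). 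The cocycle condition is checked on each of these pieces where it is known. Finally, kfl descent for vector bundles (the kfl topology is subcanonical and vector bundles descend along kfl coverings — e.g. by \cite[Theorem 3.1]{kat21} combined with faithfully flat descent after the strict base change) produces a kfl vector bundle $\cE$ on $(S,\cM_S)$ with $\cE|_{R_n}\cong\cE_n$, and by construction $\cE$ maps to $(\cE_\eta,\cE_{\widehat S},\varphi)$.

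\emph{Main obstacle.} The genuinely delicate point is Step 2: one must check that the descent datum on $\cE_n$ along the Kummer covering is well-defined and satisfies the cocycle condition, i.e. that the three-way compatibility of the gluing data ($\cE_\eta$, $\cE_{\widehat S}$, $\varphi$) actually descends through the Kummer base change. This is where the product decompositions of $R_n\otimes_R R_n$ and of its completion (all factors being discrete valuation rings, or finite products thereof, since $R_n/R$ is totally ramified) are used to reduce the verification to the classical Beauville--Laszlo setting on each factor and then invoke the full faithfulness from Step 1. The other steps are essentially formal once one has Lemma \ref{kfl vect bdle is classical after n-power ext} and the classical Beauville--Laszlo theorem in hand.
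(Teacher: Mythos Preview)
Your overall strategy matches the paper's: trivialize the kfl bundle after the Kummer cover $R\to R_n$ using Lemma \ref{kfl vect bdle is classical after n-power ext}, apply classical Beauville--Laszlo at that level, and descend. But your justification of the key step is incorrect. You write the \v{C}ech nerve as $R_n\otimes_R R_n$ and assert it is a product of discrete valuation rings. First, kfl descent takes place in the kfl site, so the self-products must be computed in the category of \emph{fs} log schemes; write $R_n^{(m)}$ for the underlying ring of the $(m{+}1)$-fold fs self-product of $(\Spec R_n,\cM_{R_n})$ over $(\Spec R,\cM_R)$. Second, even with the correct product, $R_n^{(m)}$ is \emph{not} a product of DVRs in general: already for $m=1$ and $n=2$ one computes that the saturated pushout of $\tfrac12\bN\leftarrow\bN\to\tfrac12\bN$ is $\bN\oplus\bZ/2$, so $R_2^{(1)}\cong R_2[z]/(z^2-1)$, which has nilpotents whenever the residue characteristic is $2$. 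Hence your appeal to ``the same fibre-product description applies, since these are products of DVRs'' fails precisely in the cases where kfl covers are not k\'et.

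The fix is exactly what the paper does. One observes that $R_n^{(m)}$ is flat over the DVR $R_n^{(0)}=R_n$ for every $m\geq 0$, hence $\pi$-torsion free; then the general form of Beauville--Laszlo in \cite{bl95}, which requires only that the element be a non-zero-divisor, yields
\[
\mathrm{Vect}(R_n^{(m)})\;\simeq\;\mathrm{Vect}(K_n^{(m)})\times_{\mathrm{Vect}(\widehat K_n^{(m)})}\mathrm{Vect}(\widehat R_n^{(m)})
\]
for all $m\geq 0$ simultaneously. Passing to descent data along the kfl cover then gives, for each $n$, an equivalence between the full subcategories of kfl bundles that become classical at level $n$ on both sides, and taking the colimit over $n$ (again via Lemma \ref{kfl vect bdle is classical after n-power ext}) finishes. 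In particular there is no need to split into Steps 1 and 2 or to build the descent datum by hand: once you have the level-wise Beauville--Laszlo equivalences on the whole \v{C}ech nerve, both full faithfulness and essential surjectivity drop out at once.
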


\begin{proof}
    Fix a uniformizer $\pi\in R$, and let $\alpha\colon \bN\to \cM_{R}$ be a chart defined by $1\mapsto \pi$. For an integer $n\geq 1$, we set
    \[
    (\mathrm{Spec}(R^{(0)}_{n}),\cM_{R^{(0)}_{n}})\coloneqq (\mathrm{Spec}(R),\cM_{R})\otimes_{(\bZ[\bN],\bN)} (\bZ[\frac{1}{n}\bN],\frac{1}{n}\bN),
    \]
    and we let $(\mathrm{Spec}(R^{(m)}_{n}),\cM_{R^{(m)}_{n}})$ denote the $(m+1)$--fold self-product of $(\mathrm{Spec}(R^{(0)}_{n}),\cM_{R^{(0)}_{n}})$ in the category of saturated log schemes over $(\mathrm{Spec}(R),\cM_{R})$ for $m\geq 0$. Let $K^{(m)}_{n}\coloneqq R^{(m)}_{n}[1/\pi]$, and let $\widehat{R}^{(m)}_{n}$ denote the $\pi$-adic completion of $R^{(m)}_{n}$. Let $\widehat{K}^{(m)}_{n}\coloneqq \widehat{R}^{(m)}_{n}[1/\pi]$. The ring $R^{(0)}_{n}$ is a discrete valuation ring, and $R^{(m)}_{n}$ is flat over $R^{(0)}_{n}$ for $m\geq 1$. Hence, $R^{(m)}_{n}$ is $\pi$-torsion free for $m\geq 0$. Beauville-Laszlo gluing (\cite[Theorem and Remarques (1)]{bl95}) gives equivalences
    \[
    \mathrm{Vect}(R^{(m)}_{n})\isom \mathrm{Vect}(K^{(m)}_{n})\times_{\mathrm{Vect}(\widehat{K}^{(m)}_{n})} \mathrm{Vect}(\widehat{R}^{(m)}_{n})
    \]
    for $m\geq 0$. Therefore, by working kfl locally, we obtain an equivalence
    \[
    \mathrm{Vect}_{\mathrm{kfl},n}(\mathrm{Spec}(R),\cM_{R})\to \mathrm{Vect}(K)\times_{\mathrm{Vect}(\widehat{K})} \mathrm{Vect}_{\mathrm{kfl},n}(\mathrm{Spec}(\widehat{R}),\cM_{\widehat{R}}),
    \]
    where $\mathrm{Vect}_{\mathrm{kfl},n}(\mathrm{Spec}(R),\cM_{R})$ (resp. $\mathrm{Vect}_{\mathrm{kfl},n}(\mathrm{Spec}(\widehat{R}),\cM_{\widehat{R}})$) is the full subcategory of the category of kfl vector bundles on $(\mathrm{Spec}(R),\cM_{R})$ (resp. $(\mathrm{Spec}(\widehat{R}),\cM_{\widehat{R}})$) consisting of objects which become classical after being pulled back to $(\mathrm{Spec}(R^{(0)}_{n}),\cM_{R^{(0)}_{n}})$ (resp. $(\mathrm{Spec}(\widehat{R}^{(0)}_{n}),\cM_{\widehat{R}^{(0)}_{n}})$). Taking the colimit with respect to $n\geq 1$, we obtain the equivalence in the assertion by Lemma \ref{kfl vect bdle is classical after n-power ext}.
\end{proof}

\subsection{Log finite group schemes}

In this subsection, we recall basic results on log finite group schemes and log $p$-divisible groups introduced by Kato in \cite{kat23}.

For a scheme $X$, let $\mathrm{Fin}(X)$ (resp. $\mathrm{BT}(X)$) denote the category of finite and locally free group schemes (resp. $p$-divisible groups) over $X$. When $X=\mathrm{Spec}(R)$ is affine, we write $\mathrm{Fin}(R)=\mathrm{Fin}(X)$ and $\mathrm{BT}(R)=\mathrm{BT}(X)$.

\begin{dfn}[cf.~{\cite[Definition 1.3 and \S 1.6]{kat23}}]
    Let $(X,\cM_{X})$ be an fs log scheme and $G$ be a sheaf of abelian groups on $(X,\cM_{X})_{\mathrm{kfl}}$.
    \begin{enumerate}
        \item We call $G$ a \emph{weak log finite group scheme} if there exists a kfl covering $\{(U_{i},\cM_{U_{i}})\to (X,\cM_{X})\}_{i\in I}$ such that the restriction of $G$ to $(U_{i},\cM_{U_{i}})_{\mathrm{kfl}}$ belongs to $\mathrm{Fin}(U_{i})$ for each $i\in I$. We let $\mathrm{wFin}(X,\cM_{X})$ denote the category of weak log finite group schemes over $(X,\cM_{X})$. The category $\mathrm{Fin}(X)$ is regarded as the full subcategory of $\mathrm{wFin}(X,\cM_{X})$, and we say that an object $G\in \mathrm{wFin}(X,\cM_{X})$ is \emph{classical} if $G$ belongs to $\mathrm{Fin}(X)$.
        \item For a weak log finite group scheme $G$ over $(X,\cM_{X})$, we set
        \[ G^{*}\coloneqq\sHom_{(X,\cM_{X})_{\mathrm{kfl}}}(G,\bG_{m})
        \]
        (which we call the \emph{Cartier dual} of $G$). We say that $G$ is a \emph{log finite group scheme} if both $G$ and $G^{*}$ are representable by finite Kummer log flat log schemes over $(X,\cM_{X})$. We let $\mathrm{Fin}(X,\cM_{X})$ denote the full subcategory of $\mathrm{wFin}(X,\cM_{X})$ consisting of log finite group schemes over $(X,\cM_{X})$. 
    \end{enumerate}
\end{dfn}

\begin{lem}[Coordinate rings, cf.~{\cite[Proposition 2.15]{kat23}}]\label{str sheaf of log fin grp sch}
    There is a natural equivalence between the category $\mathrm{wFin}(X,\cM_{X})$ to the category of Hopf algebra objects of the monoidal tensor category $\mathrm{Vect}(X,\cM_{X})$.
\end{lem}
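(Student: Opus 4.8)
\emph{Proof strategy.} The plan is to reduce the statement to the classical anti-equivalence between finite locally free group schemes and their coordinate rings, and then descend along a trivializing kfl covering. Recall that for any scheme $Y$ the assignment $G\mapsto \cO_{G}$ identifies $\mathrm{Fin}(Y)$ with the opposite of the category of commutative cocommutative Hopf algebras over $\cO_{Y}$ that are locally free of finite rank, and that this identification is compatible with arbitrary base change, in particular with pullback along any morphism of fs log schemes $(T,\cM_{T})\to (X,\cM_{X})$. Since the functor $\iota$ is monoidal, such a Hopf algebra on $Y$ is exactly a classical Hopf algebra object of $\mathrm{Vect}(Y,\cM_{Y})$.

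First I would construct the functor $G\mapsto \cO_{G}$ on $\mathrm{wFin}(X,\cM_{X})$. Choose a kfl covering $\{(U_{i},\cM_{U_{i}})\to (X,\cM_{X})\}_{i\in I}$ such that $G|_{(U_{i},\cM_{U_{i}})}$ is represented by some $G_{i}\in \mathrm{Fin}(U_{i})$, and put $\cO_{G,i}\coloneqq \iota(\cO_{G_{i}})$. Over the fs fibre products $(U_{ij},\cM_{U_{ij}})\coloneqq (U_{i},\cM_{U_{i}})\times_{(X,\cM_{X})}(U_{j},\cM_{U_{j}})$ the pullbacks of $G_{i}$ and $G_{j}$ are classical finite locally free group schemes representing the same kfl sheaf $G|_{(U_{ij},\cM_{U_{ij}})}$; since the kfl topology is subcanonical, Yoneda yields a canonical isomorphism of group schemes over $U_{ij}$, hence a canonical isomorphism $\varphi_{ij}\colon \cO_{G,i}|_{U_{ij}}\xrightarrow{\sim}\cO_{G,j}|_{U_{ij}}$ of Hopf algebra objects, and uniqueness in Yoneda forces the cocycle condition on triple overlaps. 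A sheaf of $\cO_{(X,\cM_{X})}$-modules satisfies kfl descent by the sheaf axiom, and being locally free of finite rank descends because a composite of kfl coverings is again a kfl covering; so $(\cO_{G,i},\varphi_{ij})$ glue to a kfl vector bundle $\cO_{G}$, and its (co)multiplications, (co)units and antipodes, being compatible with the $\varphi_{ij}$, descend to make $\cO_{G}$ a Hopf algebra object of $\mathrm{Vect}(X,\cM_{X})$. A common-refinement argument shows the result is independent of the covering, and a morphism in $\mathrm{wFin}(X,\cM_{X})$ induces, kfl-locally, a morphism of classical Hopf algebras, which glues; this defines the functor.

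For a quasi-inverse, to a Hopf algebra object $\cA$ of $\mathrm{Vect}(X,\cM_{X})$ I would attach the kfl sheaf $G_{\cA}=\underline{\Spec}(\cA)$, i.e. $(T,\cM_{T})\mapsto \Hom_{\cO_{(T,\cM_{T})}\text{-alg}}(\cA|_{(T,\cM_{T})},\cO_{(T,\cM_{T})})$, which is a group sheaf via the coalgebra structure of $\cA$. Working Zariski-locally on $X$ we may assume $X$ is quasi-compact and admits an fs chart $P\to \cM_{X}$; by Lemma \ref{kfl vect bdle is classical after n-power ext} there is then a kfl covering after pulling back along which $\cA$ becomes classical, hence a locally free Hopf algebra $A$ over some $U$, and subcanonicity identifies $G_{\cA}|_{(U,\cM_{U})}$ with the kfl sheaf represented by $\Spec(A)\in \mathrm{Fin}(U)$; thus $G_{\cA}\in \mathrm{wFin}(X,\cM_{X})$. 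The natural isomorphisms $\cO_{G_{\cA}}\cong \cA$ and $G_{\cO_{G}}\cong G$ are built kfl-locally, where they are the classical anti-equivalence, and they glue by uniqueness in Yoneda together with kfl descent.

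The substance is purely this descent bookkeeping: one checks that the classical anti-equivalence is stable under kfl base change (clear), that the transition isomorphisms are canonical so the cocycle conditions are automatic (Yoneda and subcanonicity of the kfl topology, \cite[Theorem 3.1]{kat21}), that the glued coordinate ring is genuinely a kfl vector bundle and not merely a sheaf of modules (composites of kfl coverings are kfl coverings), and that an arbitrary kfl vector bundle is kfl-locally classical so that $\underline{\Spec}$ lands in $\mathrm{wFin}$ (Lemma \ref{kfl vect bdle is classical after n-power ext}). I do not expect a deeper obstacle; the lemma is the anticipated log analogue of the classical dictionary between finite locally free group schemes and Hopf algebras.
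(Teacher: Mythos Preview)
Your proposal is correct and follows the same route as the paper: invoke the classical equivalence between $\mathrm{Fin}(Y)$ and Hopf algebra objects in $\mathrm{Vect}(Y)$, then transport it to the kfl site by descent. The paper records this in two lines (``this equivalence induces the desired equivalence via kfl descent''), whereas you spell out the descent bookkeeping; the only cosmetic slip is that fs charts are guaranteed \'etale locally rather than Zariski locally, which is harmless since strict \'etale covers are kfl covers.
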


\begin{proof}
The functor sending $G$ to $f_{*}\cO_{G}$ gives an equivalence between the category of finite and locally free group schemes over $X$ and the category of Hopf algebra objects of the monoidal tensor category $\mathrm{Vect}(X)$, where $f$ is the structure morphism $G\to X$. This equivalence induces the desired equivalence via kfl descent.
\end{proof}

\begin{prop}[Beauville-Laszlo gluing for log finite group schemes]\label{bl gluing for log fin grp}
Under the notation of Proposition \ref{bl glueing for kfl vect bdle}, natural functors
\begin{align*}
    \mathrm{wFin}(\mathrm{Spec}(R),\cM_{R})&\to \mathrm{Fin}(K)\times_{\mathrm{Fin}(\widehat{K})} \mathrm{wFin}(\mathrm{Spec}(\widehat{R}),\cM_{\widehat{R}}), \\
    \mathrm{Fin}(\mathrm{Spec}(R),\cM_{R})&\to \mathrm{Fin}(K)\times_{\mathrm{Fin}(\widehat{K})} \mathrm{Fin}(\mathrm{Spec}(\widehat{R}),\cM_{\widehat{R}})
\end{align*}
are equivalence of categories.
\end{prop}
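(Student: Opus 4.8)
My plan is to reduce the statement to Beauville--Laszlo gluing for kfl vector bundles (Proposition~\ref{bl glueing for kfl vect bdle}) through the coordinate-ring description of Lemma~\ref{str sheaf of log fin grp sch}. For the first functor, I would note that, by Lemma~\ref{str sheaf of log fin grp sch}, the categories $\mathrm{wFin}(\mathrm{Spec}(R),\cM_R)$ and $\mathrm{wFin}(\mathrm{Spec}(\widehat R),\cM_{\widehat R})$ are the categories of Hopf algebra objects in the symmetric monoidal categories $\mathrm{Vect}_{\mathrm{kfl}}(\mathrm{Spec}(R),\cM_R)$ and $\mathrm{Vect}_{\mathrm{kfl}}(\mathrm{Spec}(\widehat R),\cM_{\widehat R})$, while $\mathrm{Fin}(K)$ and $\mathrm{Fin}(\widehat K)$ are the categories of Hopf algebra objects in $\mathrm{Vect}(K)$ and $\mathrm{Vect}(\widehat K)$ (the classical Cartier description, as in the proof of Lemma~\ref{str sheaf of log fin grp sch}; here the log structures on $\mathrm{Spec}(K)$ and $\mathrm{Spec}(\widehat K)$ are trivial, so the kfl and fppf topoi agree there and $\mathrm{wFin}$ coincides with $\mathrm{Fin}$). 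The equivalence of Proposition~\ref{bl glueing for kfl vect bdle} is symmetric monoidal, being induced by pullback functors with the componentwise monoidal structure on the target fibre product; a symmetric monoidal equivalence induces an equivalence on Hopf algebra objects, and forming Hopf algebra objects commutes with fibre products of symmetric monoidal categories along symmetric monoidal functors. Putting these together shows the first functor is an equivalence.

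For the second functor, I would first observe that it is the restriction of the first functor to the full subcategory $\mathrm{Fin}(\mathrm{Spec}(R),\cM_R)$, landing in the full subcategory $\mathrm{Fin}(K)\times_{\mathrm{Fin}(\widehat K)}\mathrm{Fin}(\mathrm{Spec}(\widehat R),\cM_{\widehat R})$ because log finiteness is preserved by base change; hence the second functor is automatically fully faithful. For essential surjectivity it is enough to prove: if $G\in\mathrm{wFin}(\mathrm{Spec}(R),\cM_R)$ has $G_{\widehat R}$ a log finite group scheme over $(\mathrm{Spec}(\widehat R),\cM_{\widehat R})$, then $G$ is a log finite group scheme over $(\mathrm{Spec}(R),\cM_R)$ (the restriction $G_K$ lies in $\mathrm{Fin}(K)$ for free, the log structure on $\mathrm{Spec}(K)$ being trivial). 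Since formation of the Cartier dual commutes with the pullbacks to $\mathrm{Spec}(K)$ and $\mathrm{Spec}(\widehat R)$, the image of $G^{*}$ under the first functor is the gluing datum built from $G_K^{*}\in\mathrm{Fin}(K)$ and $G_{\widehat R}^{*}$, which is a log finite and hence weak log finite group scheme over $(\mathrm{Spec}(\widehat R),\cM_{\widehat R})$; therefore $G^{*}\in\mathrm{wFin}(\mathrm{Spec}(R),\cM_R)$. So both $G$ and $G^{*}$ are weak log finite group schemes over $(\mathrm{Spec}(R),\cM_R)$, and it remains to check that such a group is representable by a finite Kummer log flat log scheme. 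Using the chart $\bN\to\cM_R$ with $1\mapsto\pi$, Lemma~\ref{kfl vect bdle is classical after n-power ext} shows the coordinate Hopf algebras of $G$ and $G^{*}$ become classical after pullback along the Kummer covering $(\mathrm{Spec}(R_n^{(0)}),\cM_{R_n^{(0)}})\to(\mathrm{Spec}(R),\cM_R)$ for some $n\geq 1$, so over this covering $G$ and $G^{*}$ are represented by honest finite and locally free group schemes; since the composite with the Kummer covering is finite Kummer log flat and finite Kummer log flat log schemes satisfy effective descent for the kfl topology, $G$ and $G^{*}$ are representable by finite Kummer log flat log schemes over $(\mathrm{Spec}(R),\cM_R)$, i.e. $G$ is log finite.

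I expect the main obstacles to be the two compatibility inputs: (i) upgrading Proposition~\ref{bl glueing for kfl vect bdle} to a symmetric monoidal equivalence and checking that the Hopf algebra structure and the Cartier dual pass through the gluing; and (ii) the representability statement that a weak log finite group scheme over $(\mathrm{Spec}(R),\cM_R)$ which becomes classical over a Kummer covering is representable by a finite Kummer log flat log scheme over $(\mathrm{Spec}(R),\cM_R)$ (effective kfl descent for such log schemes). The remainder is formal bookkeeping with Lemma~\ref{str sheaf of log fin grp sch} and the classical Beauville--Laszlo theorem.
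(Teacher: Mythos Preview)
Your treatment of the first functor is essentially the paper's proof: the paper also derives it from Proposition~\ref{bl glueing for kfl vect bdle} and Lemma~\ref{str sheaf of log fin grp sch}, and the symmetric-monoidal compatibility you spell out is exactly the content hidden in that one-line reduction.

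For the second functor, however, your route diverges from the paper's and the divergence is where the gap lies. The paper does \emph{not} use kfl descent along the Kummer cover $(\mathrm{Spec}(R_n^{(0)}),\cM_{R_n^{(0)}})\to(\mathrm{Spec}(R),\cM_R)$; it uses \emph{strict} fpqc descent along $\mathrm{Spec}(\widehat R)\to\mathrm{Spec}(R)$, which is precisely what \cite[Theorem~7.1 and Theorem~8.1]{kat21} provide (strict fppf descent, upgraded to strict fpqc by the same proof). Concretely: given $G\in\mathrm{wFin}(\mathrm{Spec}(R),\cM_R)$ with $G_{\widehat R}\in\mathrm{Fin}(\mathrm{Spec}(\widehat R),\cM_{\widehat R})$, both $G_{\widehat R}$ and $(G^*)_{\widehat R}\cong (G_{\widehat R})^*$ are representable by finite Kummer log flat log schemes over $\widehat R$, and since $\mathrm{Spec}(\widehat R)\to\mathrm{Spec}(R)$ is a strict fpqc cover, these representing log schemes descend to $R$. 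That argument uses the hypothesis $G_{\widehat R}\in\mathrm{Fin}$ in an essential way.

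Your argument, by contrast, does not really use that hypothesis. Your Step~1 (showing $G^*\in\mathrm{wFin}$) is automatic for \emph{any} $G\in\mathrm{wFin}$: kfl-locally $G$ is classical, hence so is its Cartier dual, so $G^*\in\mathrm{wFin}$ without invoking anything about $\widehat R$. Your Step~2 then only uses that $G,G^*\in\mathrm{wFin}(\mathrm{Spec}(R),\cM_R)$. Thus, if your argument worked, it would prove $\mathrm{wFin}=\mathrm{Fin}$ over $(\mathrm{Spec}(R),\cM_R)$ outright, with no input from the $\widehat R$-side. The obstacle you yourself flag as (ii)---effective kfl descent for finite Kummer log flat log schemes---is exactly the missing ingredient, and it is not furnished by the references the paper has available (Kato's theorems are stated for strict fppf covers, and the Kummer cover $R\to R_n^{(0)}$ is not strict). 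Whether or not such kfl descent can ultimately be established, the paper's strict-fpqc route sidesteps the issue entirely and makes the hypothesis do real work.
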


\begin{proof}
    The equivalence of the former functor follows from Proposition \ref{bl glueing for kfl vect bdle} and Lemma \ref{str sheaf of log fin grp sch}. Then the former equivalence restricts to the latter equivalence thanks to strict fpqc descent for finite Kummer log flat log schemes (\cite[Theorem 7.1 and Theorem 8.1]{kat21}). Although the statement in \emph{loc. cit.} proves strict fppf descent, the same proof as in \emph{loc. cit.} also shows strict fpqc descent.
\end{proof}

Let $\mathrm{Lcf}((X,\cM_{X})_{\mathrm{k\et}})$ denote the category of locally constant sheaves of finite abelian groups on $(X,\cM_{X})_{\mathrm{k\et}}$. Then we have a natural fully faithful functor
\[
\mathrm{Lcf}((X,\cM_{X})_{\mathrm{k\et}})\hookrightarrow \mathrm{wFin}(X,\cM_{X})
\]
by \cite[Theorem 10.2 (2)]{kat21}.

\begin{lem}\label{log fin grp sch and lcf sheaf}
     Let $n\geq 1$ be an integer that is invertible on $X$. Then the above functor induces equivalences
     \[
     \mathrm{Lcf}((X,\cM_{X})_{\mathrm{k\et}},\bZ/n)\isom \mathrm{Fin}((X,\cM_{X}),\bZ/n)\isom \mathrm{wFin}((X,\cM_{X}),\bZ/n),
     \]
     where $\mathrm{Lcf}((X,\cM_{X})_{\mathrm{k\et}},\bZ/n)$,  $\mathrm{Fin}((X,\cM_{X}),\bZ/n)$, $\mathrm{wFin}((X,\cM_{X}),\bZ/n)$ are the full subcategories of $\mathrm{Lcf}((X,\cM_{X})_{\mathrm{k\et}})$, $\mathrm{Fin}(X,\cM_{X})$, $\mathrm{wFin}(X,\cM_{X})$)) consisting of objects killed by $n$, respectively.
\end{lem}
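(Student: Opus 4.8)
The plan is to reduce the statement to a single essential‑surjectivity assertion and then establish that by a descent argument, with the hypothesis ``$n$ invertible on $X$'' used in an essential way. Full faithfulness of all the functors in the statement is automatic: the functor $\mathrm{Lcf}((X,\cM_{X})_{\mathrm{k\et}})\hookrightarrow\mathrm{wFin}(X,\cM_{X})$ is fully faithful by \cite[Theorem 10.2 (2)]{kat21}, and $\mathrm{Fin}(X,\cM_{X})$ is by definition a full subcategory of $\mathrm{wFin}(X,\cM_{X})$, so the intermediate functor is fully faithful as well. Hence it suffices to prove (a) that a locally constant sheaf of finite $\bZ/n$‑modules on $(X,\cM_{X})_{\mathrm{k\et}}$ is a log finite group scheme, and (b) that every weak log finite group scheme over $(X,\cM_{X})$ killed by $n$ lies in the essential image of $\mathrm{Lcf}((X,\cM_{X})_{\mathrm{k\et}},\bZ/n)$, i.e.\ becomes constant Kummer log étale locally.

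For (a), I would argue as follows. Such a $G$, viewed as a weak log finite group scheme, is $\mathrm{k\et}$‑locally (hence $\mathrm{kfl}$‑locally) a classical finite étale group scheme, so by descent for finite Kummer log flat log schemes (\cite[Theorem 7.1 and Theorem 8.1]{kat21}, cf.\ also \cite[Theorem 10.2 (2)]{kat21}) it is representable by a finite Kummer log flat log scheme over $(X,\cM_{X})$. Since $n$ is invertible on $X$ one has $G^{*}=\sHom_{(X,\cM_{X})_{\mathrm{kfl}}}(G,\bG_{m})=\sHom_{(X,\cM_{X})_{\mathrm{kfl}}}(G,\mu_{n})$, and as $\mu_{n}$ is classical finite étale, hence $\mathrm{k\et}$‑locally constant, $G^{*}$ is again a locally constant sheaf of finite $\bZ/n$‑modules on $(X,\cM_{X})_{\mathrm{k\et}}$; so $G^{*}$ is also representable by a finite Kummer log flat log scheme, and therefore $G\in\mathrm{Fin}(X,\cM_{X})$.

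For (b), the main step: the property of being $\mathrm{k\et}$‑locally constant is local on $X$ for the strict étale topology, so I would first reduce to the case where $X=\Spec(R)$ is affine and $(X,\cM_{X})$ admits a sharp fs chart $P\to\cM_{X}$, in particular with $P^{\mathrm{gp}}$ torsion‑free. By the coordinate‑ring equivalence (Lemma \ref{str sheaf of log fin grp sch}), $G$ corresponds to a Hopf algebra object of $\mathrm{Vect}(X,\cM_{X})$ whose underlying $\cO_{(X,\cM_{X})}$‑module is a kfl vector bundle; by Lemma \ref{kfl vect bdle is classical after n-power ext} this becomes classical after the Kummer covering $(X_{m},\cM_{X_{m}})\coloneqq(X,\cM_{X})\otimes_{(\bZ[P],P)^{a}}(\bZ[P^{1/m}],P^{1/m})^{a}\to(X,\cM_{X})$ for a suitable $m\geq 1$. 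Then $G|_{X_{m}}$ is a classical finite locally free group scheme over $X_{m}$ killed by $n$, hence (as $n$ is invertible) finite étale over $X_{m}$. Writing $m=m'm''$ with $m'$ the largest divisor of $m$ invertible on $X$ and localizing so that all residue characteristics of $R$ equal a fixed prime $p$, so that $m''$ is a power of $p$ (the case where $R$ is a $\bQ$‑algebra being immediate, as then $X_{m}\to X$ is already Kummer log étale), the morphism $X_{m}\to X_{m'}$ is obtained by adjoining $p$‑power roots of monomials and is therefore a universal homeomorphism. By topological invariance of the étale site, the descent datum on the finite étale group scheme $G|_{X_{m}}$ for the covering $X_{m}\to X_{m'}$ is effective in the category of finite étale group schemes over $X_{m'}$, so $G|_{X_{m'}}$ is a classical finite étale group scheme over $X_{m'}$, in particular a locally constant sheaf of finite abelian groups on $(X_{m'},\cM_{X_{m'}})_{\mathrm{k\et}}$; since $m'$ is invertible on $X$, the covering $X_{m'}\to X$ is Kummer log étale, and being locally constant descends along it. Thus $G$ is a locally constant sheaf of finite abelian groups on $(X,\cM_{X})_{\mathrm{k\et}}$, which proves (b) (and reconfirms (a) for this $G$).

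I expect the main obstacle to be exactly the last step of (b): showing that the kfl covering along which $G$ becomes classical can be replaced by a Kummer log étale one. This is precisely where the hypothesis ``$n$ invertible on $X$'' enters — it forces $G$ to be étale wherever it is classical, and an étale group scheme is insensitive to the infinitesimal, ``residue‑characteristic'' factor of a Kummer covering, so that factor may be discarded.
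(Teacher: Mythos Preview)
Your overall strategy is sound and your part~(a) is fine, but there is a genuine gap in part~(b). The claim that $X_m\to X_{m'}$ is a universal homeomorphism fails in mixed characteristic. For instance, take $X=\Spec(\bZ_p)$ with chart $\bN\to\cM_X$ sending $1\mapsto p$, and $m'=1$, $m''=p$: then $X_p=\Spec(\bZ_p[T]/(T^p-p))$, and over the generic point $\Spec(\bQ_p)$ this is a separable field extension of degree~$p$, so the map is not radicial and topological invariance of the \'etale site does not apply. The phrase ``localizing so that all residue characteristics of $R$ equal a fixed prime $p$'' cannot be arranged by Zariski localization: a local ring such as $\bZ_p$ already has residue characteristics $0$ and $p$ simultaneously. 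Thus your descent from $X_m$ to $X_{m'}$ breaks down exactly where the content lies.

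The paper's proof avoids this entirely. It first cites \cite[Proposition 2.1]{kat23} for $\mathrm{Fin}((X,\cM_X),\bZ/n)=\mathrm{wFin}((X,\cM_X),\bZ/n)$. For the remaining equivalence it observes that, since a classical finite locally free group scheme killed by an invertible integer is \'etale-locally constant, $\mathrm{wFin}((X,\cM_X),\bZ/n)$ is nothing but the category of locally constant sheaves of finite $\bZ/n$-modules on $(X,\cM_X)_{\mathrm{kfl}}$; it then invokes \cite[Theorem 10.2(2)]{kat21}, which says that locally constant sheaves of finite abelian groups on the kfl and k\'et sites agree. Your argument in~(b) is in effect an attempt to reprove that theorem of Kato in this special case, and the obstacle you hit is precisely its nontrivial content; replacing a kfl cover by a k\'et one requires more than the universal-homeomorphism trick in mixed characteristic.
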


\begin{proof}
    The equivalence of the second functor follows from \cite[Proposition 2.1]{kat23}. 
    
    Since a finite and locally free group scheme killed by an integer invertible on the base is an \'{e}tale locally constant sheaf, $\mathrm{wFin}((X,\cM_{X}),\bZ/n)$ is nothing but the category of locally constant sheaves of finite $\bZ/n$-modules on $(X,\cM_{X})_{\mathrm{kfl}}$. Hence, it follows from \cite[Theorem 10.2 (2)]{kat21} that the composite of the functors in the statement is an equivalence.
\end{proof}

The notion of log finite group schemes allows us to define log $p$-divisible groups in a usual way.

\begin{dfn}
Let $p$ be a prime, and let $(X,\cM_{X})$ be an fs log scheme. Let $G$ be a sheaf of abelian groups on $(X,\cM_{X})_{\mathrm{kfl}}$. We call $G$ a \emph{weak log $p$-divisible group} if the following conditions are satisfied:
\begin{enumerate}
    \item the multiplication by $p$ map $\times p\colon G\to G$ is surjective;
    \item for every $n\geq 1$, the sheaf $G[p^n]\coloneqq\mathrm{Ker}(\times p^{n}\colon G\to G)$ is a weak log finite group scheme over $(X,\cM_{X})$;
    \item $G=\bigcup_{n\geq 1} G[p^{n}].$
\end{enumerate}
The category of weak log $p$-divisible groups over $(X,\cM_{X})$ is denoted by $\mathrm{wBT}(X,\cM_{X})$. A weak log $p$-divisible group $G$ over $(X,\cM_{X})$ is called a \emph{log $p$-divisible group} if $G[p^n]$ is a log finite group scheme for every $n\geq 1$. The category of log $p$-divisible groups over $(X,\cM_{X})$ is denoted by $\mathrm{BT}(X,\cM_{X})$. The category $\mathrm{BT}(X)$ of $p$-divisible groups over $X$ is regarded as the full subcategory of $\mathrm{wBT}(X,\cM_{X})$. A weak log $p$-divisible group $G$ over $(X,\cM_{X})$ is called \emph{classical} if $G$ belongs to $\mathrm{BT}(X)$. Clearly, $G$ is classical if and only if $G[p^n]$ is classical for every $n\geq 1$. 
\end{dfn}

\subsection{Log regular schemes}

In this subsection, we recall the definition of log regularity and some properties of log regular log schemes.

\begin{dfn}[\cite{kat94}]
    Let $(X,\cM_{X})$ be a locally noetherian fs log scheme. For $x\in X$, let $\bar{x}$ denote a geometric point on $x$. Let $I(\bar{x})$ be the ideal of $\cO_{X, \bar{x}}$ generated by the image of the map $\cM_{X, \bar{x}}\setminus \cO_{X, \bar{x}}^{\times}\to \cO_{X, \bar{x}}$. We say that $(X,\cM_{X})$ is \emph{log regular} at $x$ if the following conditions are satisfied:
    \begin{enumerate}
        \item $\cO_{X, \bar{x}}/I(\bar{x})$ is a regular local ring.
        \item $\dim(\cO_{X, \bar{x}})=\dim(\cO_{X, \bar{x}}/I(\bar{x}))+\rk(\cM_{X, \bar{x}}^{\mathrm{gp}}/\cO_{X, \bar{x}}^{\times})$.
    \end{enumerate}
    The log scheme $(X,\cM_{X})$ is called \emph{log regular} if it is log regular at every point $x\in X$. For example, an fs log scheme $(X,\cM_{X})$ defined by a locally noetherian regular scheme $X$ and a normal crossings divisor $D$ is log regular. Conversely, for a log regular log scheme $(X,\cM_{X})$ whose underlying scheme $X$ is regular, the log structure $\cM_{X}$ is defined by a normal crossings divisor by \cite[Theorem 11.6]{kat94} and \cite[Chapter $\rm{III}$, Theorem 1.11.6]{ogu18}.
    
    For a log regular log scheme $(X,\cM_{X})$, the condition $(2)$ implies that the largest open subset $U$ on which the log structure $\cM_{X}$ is trivial is dense. Such an open subset $U$ is called the \emph{interior} of $(X,\cM_{X})$.
\end{dfn}

\begin{prop}[Kato]\label{log reg}
    Let $(X,\cM_{X})$ be a locally noetherian fs log scheme. 
    \begin{enumerate}
        \item The subset $\{ x\in X \mid (X,\cM_{X}) \ \text{is log regular at} \ x\}\subset X$  is stable under generalization.
        \item If $(X,\cM_{X})$ is log regular at $x\in X$, the scheme $X$ is normal at $x$.
        \item Suppose that $(X,\cM_{X})$ is log regular. Let $U$ be the interior of $(X,\cM_{X})$. Then $\cM_{X}$ is the subsheaf of $\cO_{X}$ consisting of functions invertible on $U$.
    \end{enumerate}
\end{prop}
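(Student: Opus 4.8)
The plan is to deduce all three assertions from Kato's structure theory of complete log regular local rings. Fix $x\in X$ and write $A=\cO_{X,\bar{x}}$, $P=\cM_{X,\bar{x}}/\cO_{X,\bar{x}}^{\times}$ (an fs monoid, since the log structure is fs), $I=I(\bar{x})$, and $r=\rk P^{\mathrm{gp}}$. The one substantial input I would establish first (or cite from \cite{kat94}) is that, when $(X,\cM_{X})$ is log regular at $x$, the completion $\widehat{A}$ is isomorphic to the completed monoid algebra $C[\![P]\!]$ in the equicharacteristic case, and to a hypersurface $C[\![P]\!][\![t_{1},\dots,t_{e}]\!]/(\vartheta)$ with $\vartheta\equiv p$ modulo the maximal ideal in the mixed characteristic case, where $C$ is a complete regular local ring lifting $\widehat{A/I}$; moreover under this isomorphism $I\widehat{A}$ is generated by the monomials attached to $P\setminus\{0\}$. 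Granting this, (2) and (3) are short, while the genuine content sits in (1) together with the structure theorem itself.

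For (2): the hypersurface presentation exhibits $\widehat{A}$, hence $A$, as Cohen--Macaulay, so $A$ satisfies Serre's condition $(S_{2})$; it remains to check $(R_{1})$. Using (1), a localization $A_{\mathfrak{p}}$ at a height-one prime $\mathfrak{p}$ is again log regular of dimension $\leq 1$. If $I\not\subseteq\mathfrak{p}$ the induced log structure is trivial there (as $\overline{\cM}_{X}$ is sharp) and $A_{\mathfrak{p}}=(A/I)_{\mathfrak{p}}$ is regular; otherwise the dimension formula forces $\overline{\cM}_{X,\bar{\mathfrak{p}}}\cong\bN$ and $\mathfrak{p}A_{\mathfrak{p}}$ to be generated by a single monomial, whereupon Cohen--Macaulayness of $A_{\mathfrak{p}}$ shows this generator is a nonzerodivisor and Krull's intersection theorem identifies $A_{\mathfrak{p}}$ with a discrete valuation ring. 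Hence $A$ is normal, which is (2).

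For (3): the inclusion $\cM_{X}\subseteq\{f\in\cO_{X}:f\ \text{invertible on}\ U\}$ is immediate because $\cM_{X}|_{U}=\cO_{U}^{\times}$, once one notes that the structure map $\cM_{X}\to\cO_{X}$ is injective (distinct monomials remain linearly independent in $\widehat{A}\cong C[\![P]\!]$). For the reverse inclusion, take $f\in A$ invertible on $U$ and pass to $\widehat{A}$; near $x$ the complement of $U$ is the toric boundary $V(I\widehat{A})$, so $\mathrm{div}(f)$ is an effective divisor on $\Spec\widehat{A}$ supported on the torus-invariant prime divisors. By the toric description of principal invariant divisors it equals $\mathrm{div}(e^{q})$ for some $q\in P^{\mathrm{gp}}$, and effectivity together with saturatedness of $P$ gives $q\in P$; thus $f=u\,e^{q}$ with $u\in\widehat{A}^{\times}$, and descending along the faithfully flat map $A\to\widehat{A}$ (using that $A$ is normal, so $A=\widehat{A}\cap\mathrm{Frac}(A)$) yields $u\in A^{\times}$ and $f\in\cM_{X,\bar{x}}$.

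Finally (1), where I expect the difficulty to concentrate. For a generization $x'$ of $x$ corresponding to a prime $\mathfrak{p}\subset A$, the stalk $\overline{\cM}_{X,\bar{x}'}$ is the quotient $P/F$ by the face $F=\{\bar q\in P:\alpha(q)\notin\mathfrak{p}\}$, and one must verify the two defining conditions of log regularity for $A_{\mathfrak{p}}$ equipped with this monoid. I would reduce this, via faithful flatness of $A\to\widehat{A}$ and the structure theorem, to the purely combinatorial statement that the monoid scheme $\Spec\bZ[P]$ with its canonical log structure stays log regular along the specializations corresponding to faces of $P$ — essentially the compatibility of $\dim\bZ[P/F]$ with $\rk(P/F)^{\mathrm{gp}}$ — and then transport the regularity of the quotient through the presentation of $\widehat{A}$. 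The main obstacle is thus twofold: proving the structure theorem, whose mixed characteristic form requires pinning down the hypersurface equation $\vartheta$ and checking that the resulting ring is log regular, and carrying out this dimension bookkeeping for (1). Both are precisely the points treated in \cite{kat94} (see also \cite[Ch.~III]{ogu18}), so in practice I would invoke those and reproduce only the short deductions of (2) and (3) above.
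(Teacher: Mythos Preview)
The paper's own proof is simply three citations to \cite{kat94}, so there is nothing to compare at the level of strategy; your plan is a faithful outline of Kato's original arguments (structure theorem for completions, Serre's criterion for (2), toric divisor analysis for (3), face combinatorics for (1)), and you yourself conclude by deferring to \cite{kat94}.

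Two small slips in your write-up, neither fatal to the approach. In (2), the correct dichotomy at a height-one prime $\mathfrak{p}$ is whether $\rk\,\overline{\cM}_{X,\bar{\mathfrak{p}}}^{\,\mathrm{gp}}$ equals $0$ or $1$, as forced by the dimension formula applied at $\mathfrak{p}$; it is \emph{not} governed by whether $I=I(\bar{x})$ lies in $\mathfrak{p}$. For $P=\bN^{2}$, $A=k[\![x,y]\!]$, $\mathfrak{p}=(x)$ one has $I=(x,y)\not\subseteq\mathfrak{p}$, yet the log structure at $\mathfrak{p}$ is $\bN$, not trivial. Relatedly, the line ``$A_{\mathfrak{p}}=(A/I)_{\mathfrak{p}}$'' should read $A_{\mathfrak{p}}/I(\bar{\mathfrak{p}})=A_{\mathfrak{p}}$: it is the ideal $I(\bar{\mathfrak{p}})$ at the generization, not $I\cdot A_{\mathfrak{p}}$, that vanishes when the log structure there is trivial. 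In (3), the complement of $U$ is $\bigcup_{p\in P\setminus\{0\}}V(\alpha(p))$, not $V(I\widehat{A})$ (same counterexample); but this does not affect your conclusion that $\mathrm{div}(f)$ is supported on the torus-invariant prime divisors, so the rest of the argument goes through.
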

\begin{proof} (1) See \cite[Proposition 7.1]{kat94}.
    
    (2) See \cite[Theorem 4.1]{kat94}.

    (3) See \cite[Theorem 11.6]{kat94}.
\end{proof}

\begin{lem}\label{inv func on interior}
    Let $(X,\cM_{X})$ be a log regular log scheme with an interior $U\subset X$. Suppose that we are given a finitely generated monoid $P$ and a chart $\alpha\colon P\to \cM_{X}$. Then a natural monoid map $P^{\mathrm{gp}}\oplus \cM_{X}(X)\to \cO_{X}(U)^{\times}$ is surjective.
\end{lem}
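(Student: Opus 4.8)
The plan is to reduce the statement to a local computation and then invoke Proposition \ref{log reg}(3), which identifies $\cM_X$ with the sheaf of functions on $X$ that are invertible on $U$. The claim is local, so I would work in a neighbourhood of a point $x\in X$ and prove that a given $u\in\cO_X(U)^\times$ lies in the image of $P^{\mathrm{gp}}\oplus\cM_X(X)\to\cO_X(U)^\times$; by a standard gluing/patching argument (using that $X$ is normal, hence locally factorial after further localization, and that the monoid $P$ is fixed globally) it suffices to treat the stalk at $x$. So I would like to show: for $u\in\cO_{X,\bar x}$ invertible on $U$, there exist $m\in\cM_X(X)$ and $q\in P^{\mathrm{gp}}$ with $u=\alpha(q)\cdot m$ in $\cO_{X,\bar x}^\times$.

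The key point is a divisor-theoretic one. Since $(X,\cM_X)$ is log regular, $X$ is normal (Proposition \ref{log reg}(2)), so near $x$ the complement $X\setminus U$ is a union of prime divisors $D_1,\dots,D_r$, each of which is the vanishing locus of one of the generators of $\cM_{X,\bar x}/\cO_{X,\bar x}^\times$ — equivalently, the image of some element of $P$ generates the ideal of $D_i$ up to a unit. Given $u$ invertible on $U$, its divisor $\mathrm{div}(u)$ is supported on $\bigcup D_i$, say $\mathrm{div}(u)=\sum_i n_i D_i$ with $n_i\in\bZ$. Choosing for each $i$ an element $p_i\in P$ mapping to a local equation of $D_i$, the element $q:=\sum_i n_i p_i\in P^{\mathrm{gp}}$ has the property that $\alpha(q)$ and $u$ have the same divisor near $x$; hence $u/\alpha(q)$ is a unit in $\cO_{X,\bar x}$. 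But a unit in $\cO_{X,\bar x}$ is in particular a function invertible on $U$, so by Proposition \ref{log reg}(3) it lies in $\cM_{X,\bar x}$, i.e.\ it comes from a section of $\cM_X$ over a neighbourhood of $x$ (and shrinking, from $\cM_X(X)$ after the usual localization). This exhibits $u$ in the image. The fact that $q\in P^{\mathrm{gp}}$ rather than $P$ is exactly why we need the group completion on the left-hand side.

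I expect the main obstacle to be the globalization: matching the local choices of $q_i\in P^{\mathrm{gp}}$ across different points of $X$ and producing a single $q\in P^{\mathrm{gp}}$ together with a single $m\in\cM_X(X)$ that works for the given $u\in\cO_X(U)^\times$ globally, rather than just in each stalk. One clean way around this is to argue it is enough to check surjectivity on stalks — a map of sheaves is an epimorphism iff it is so on stalks, and here the target $\cO_X(U)^\times = (j_*\cO_U^\times)(X)$ with $j\colon U\hookrightarrow X$, while the source $P^{\mathrm{gp}}\oplus\cM_X(X)$ should be compared with the corresponding sheaf $\underline{P}^{\mathrm{gp}}\oplus\cM_X$; since $P$ is a (constant) finitely generated monoid and $X$ is noetherian, sections and stalks are controlled, and the stalkwise statement proved above suffices. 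Care is needed with the distinction between the Zariski and étale (or strict) topology, since $\cO_{X,\bar x}$ denotes a strictly henselian local ring; but log regularity and the identification in Proposition \ref{log reg}(3) are insensitive to this, so passing between $\cO_{X,x}$ and $\cO_{X,\bar x}$ is harmless. Apart from this bookkeeping, the argument is elementary given the structural results on log regular schemes already recalled.
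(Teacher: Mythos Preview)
Your local, divisor-theoretic intuition is sound and close in spirit to what the paper does, but the globalization step is a genuine gap. Surjectivity of a map of sheaves (which is what a stalkwise check gives you) does not imply surjectivity on global sections; passing from ``$\underline{P}^{\mathrm{gp}}\oplus\cM_X \twoheadrightarrow j_*\cO_U^\times$ as sheaves'' to ``$P^{\mathrm{gp}}\oplus\cM_X(X)\to\cO_X(U)^\times$ is onto'' would require controlling an $H^1$ of the kernel, and your remark that ``sections and stalks are controlled'' does not supply this. There is also a subtler local issue: you assume that for each component $D_i$ there exists $p_i\in P$ with $\alpha(p_i)$ a local equation for $D_i$ alone (valuation $1$ along $D_i$, $0$ along the others). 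For a general global chart this can fail: take $X$ regular with $D=D_1\sqcup D_2$ disjoint and $P=\bN$ charting via a single global function vanishing to order~$1$ along both components; then $P^{\mathrm{gp}}\to\bZ^2$ is the diagonal embedding, and you cannot realize an arbitrary divisor supported on $D$ by an element of $\alpha(P^{\mathrm{gp}})$. (In the general log regular, non-regular case the individual $D_i$ need not even be Cartier, so ``local equation for $D_i$'' is already problematic.)

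The paper's proof sidesteps both problems with one global move. Let $p\in P$ be the product of a finite set of monoid generators, so that $\alpha(p)\in\cO_X(X)$ vanishes precisely on $X\setminus U$. Given $f\in\cO_X(U)^\times$, choose $N\gg 0$ so that $\alpha(p)^N f$ has non-negative valuation at every codimension-one generic point of $X\setminus U$. Normality of $X$ (Proposition~\ref{log reg}(2)) then forces $\alpha(p)^N f\in\cO_X(X)$, and since this function is still invertible on $U$, Proposition~\ref{log reg}(3) gives $\alpha(p)^N f\in\cM_X(X)$. Hence $f=\alpha(p^{-N})\cdot(\alpha(p)^N f)$ with $p^{-N}\in P^{\mathrm{gp}}$. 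The key simplification over your approach is that one does not need to \emph{match} $\mathrm{div}(f)$ by an element of $\alpha(P^{\mathrm{gp}})$; it suffices to dominate its negative part, and a high power of the product of the generators always does this. The argument is global from the outset, so no patching or cohomological input is required.
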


\begin{proof}
    Take a generator $\{p_{1},\dots,p_{m}\}$ of $P$, and let $p\coloneqq \prod_{i=1}^{m}p_{i}$. Then the vanishing locus of $\alpha(p)\in \cM_{X}(X)\subset \cO_{X}(X)$ coincides with $X\backslash U$. Let $f\in \cO_{U}(U)^{\times}$. For the generic point $\eta$ of each irreducible component $E$ of $X\backslash U$ with $\mathrm{codim}_{X}(E)=1$, the local ring $\cO_{X,\eta}$ is a discrete valuation ring by Proposition \ref{log reg}(2). Take a sufficiently large integer $N\geq 1$ such that  the valuation of $\alpha(p)^{N}f$ defined by the discrete valuation ring $\cO_{X,\eta}$ is non-negative for every $\eta$. Then $\alpha(p)^{N}f\in \cO_{X}(X)$, and so $\alpha(p)^{N}f\in \cM_{X}(X)$ by Proposition \ref{log reg}(3). This proves the assertion.
\end{proof}

\begin{lem}[{\cite[Lemma 4.3]{ino23}}] \label{log reg stab}
    Let $(X,\cM_{X})$ be a log regular log scheme whose underlying scheme is the spectrum of a noetherian strict local ring. Let $x$ be the unique closed point of $X$. Fix a chart $P\to \cM_{X}$ inducing $P\isom \cM_{X,\bar{x}}/\cO_{X,\bar{x}}^{\times}$. Then, for an fs monoid $Q$ and a Kummer map $P\to Q$, the fs log scheme $(X,\cM_{X})\otimes_{(\bZ[P],P)} (\bZ[Q],Q)$ is also log regular.
\end{lem}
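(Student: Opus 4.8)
The plan is to reduce, by completion, to an explicit description of $\cO_{X}$ via Kato's structure theorem for log regular local rings, and then to identify the base change as a ring of the same shape with $Q$ in place of $P$. Write $R:=\cO_{X}$, with maximal ideal $\mathfrak{m}$ and (separably closed) residue field $k$, and let $R'$ be the coordinate ring of $(X,\cM_{X})\otimes_{(\bZ[P],P)}(\bZ[Q],Q)$. Since $P\to Q$ is Kummer, $\bZ[P]\to\bZ[Q]$ is finite, so $R\to R'$ is finite and $\Spec R'$ is semilocal; moreover the chart $P\to\cM_{X}$ makes the structure morphism $X\to(\Spec\bZ[P],P)$ strict, so the underlying scheme of the base change is simply $\Spec(R\otimes_{\bZ[P]}\bZ[Q])$. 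By Proposition \ref{log reg}(1) log regularity is stable under generalization, so it suffices to prove it at the closed points of $\Spec R'$. The quantities occurring in the definition of log regularity --- the dimension of a local ring and the regularity of its quotient by the ideal $I(\bar x)$ --- are unchanged by completion; since $R$ is henselian we have $R'=\prod_{i}R'_{i}$ with each $R'_{i}$ local, $\widehat{R'_{i}}=R'_{i}\otimes_{R}\widehat{R}$, and $\prod_{i}\widehat{R'_{i}}=\widehat{R}\otimes_{\bZ[P]}\bZ[Q]$, the coordinate ring of the analogous base change of $(\Spec\widehat{R},\cM_{\widehat{R}})$. Hence we may assume $R$ is complete. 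We may also assume $Q$ is sharp, as the unit group of $Q$ contributes a factor $\bG_{m}^{r}$ with trivial log structure that affects neither side of the claimed log regularity.

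With $R$ complete, set $d:=\dim(R/I(\bar x))$. By Kato's structure theorem for log regular local rings (\cite{kat94}) there is, compatibly with the chart $\bZ[P]\to R$, an isomorphism
\[
R\cong k[[P]][[t_{1},\dots,t_{d}]]
\]
if $R$ has equal characteristic, and $R\cong V[[P]][[t_{1},\dots,t_{d}]]/(\theta)$ if $R$ has mixed characteristic, where $V$ is a complete discrete valuation ring with residue field $k$, $\theta$ is a non-zerodivisor whose image modulo $I(\bar x)$ lies in $\mathfrak{m}\setminus\mathfrak{m}^{2}$, and $k[[P]]$ denotes the completion of the monoid algebra $k[P]$ along the ideal generated by the image of $P\setminus P^{\times}$. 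Because $P\to Q$ is Kummer, every element of $Q$ has a power in the image of $P$, so the ideals $(P\setminus P^{\times})k[Q]$ and $(Q\setminus Q^{\times})$ define the same topology on $k[Q]$; together with the finiteness of $k[P]\to k[Q]$ this gives $k[[P]]\otimes_{k[P]}k[Q]\cong k[[Q]]$. Tensoring the structure isomorphism with $\bZ[Q]$ over $\bZ[P]$ and using that finite base change commutes with the formal power series construction, we obtain $R'\cong k[[Q]][[t_{1},\dots,t_{d}]]$, resp.\ $R'\cong V[[Q]][[t_{1},\dots,t_{d}]]/(\theta')$ with $\theta'$ the image of $\theta$; reduction modulo $I(\bar x)$ identifies $k[[P]][[t_{*}]]$ and $k[[Q]][[t_{*}]]$ (resp.\ $V[[P]][[t_{*}]]$ and $V[[Q]][[t_{*}]]$) with $k[[t_{*}]]$ (resp.\ $V[[t_{*}]]$) and carries $\theta$ to $\theta'$, so $\theta'$ is again a non-zerodivisor whose reduction lies in the maximal ideal but not its square.

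It remains to observe that a ring of the shape $k[[Q]][[t_{1},\dots,t_{d}]]$, resp.\ $V[[Q]][[t_{1},\dots,t_{d}]]/(\theta')$ with $\theta'$ as above, equipped with the log structure associated to $Q$, is log regular at its closed point: the quotient by $I$ is $k[[t_{1},\dots,t_{d}]]$, resp.\ $V[[t_{1},\dots,t_{d}]]/(\theta')$, which is regular of dimension $d$, while the Krull dimension equals $d+\rk Q^{\mathrm{gp}}$, and $\rk Q^{\mathrm{gp}}=\rk P^{\mathrm{gp}}$ since $P^{\mathrm{gp}}\hookrightarrow Q^{\mathrm{gp}}$ has finite index; this matches $\dim(R'/I(\bar x'))+\rk(\cM_{X',\bar x'}^{\mathrm{gp}}/\cO_{X',\bar x'}^{\times})$. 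Therefore $(X,\cM_{X})\otimes_{(\bZ[P],P)}(\bZ[Q],Q)$ is log regular at its closed points, hence everywhere by Proposition \ref{log reg}(1).

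I expect the mixed characteristic case to be the main obstacle. There one must invoke Kato's structure theorem in precisely the form above and check that the distinguished element $\theta$ stays well-positioned under the Kummer base change --- in particular that $\theta'$ remains a non-zerodivisor and that $\theta'\bmod I(\bar x')\notin\mathfrak{m}_{R'}^{2}$, which is what forces $R'/I(\bar x')$ to be regular. In equal characteristic everything reduces to bookkeeping with completed monoid algebras. A minor point, already used above, is that because the chart makes $X\to(\Spec\bZ[P],P)$ strict, the fs fiber product in the statement has underlying scheme the honest tensor product $\Spec(R\otimes_{\bZ[P]}\bZ[Q])$, so no $\mathrm{fs}$-ification intervenes; after the reduction to the structure-theorem form this tensor product is visibly normal.
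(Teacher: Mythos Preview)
The paper does not prove this lemma itself; it only cites \cite[Lemma~4.3]{ino23}. Your route via completion and Kato's structure theorem for complete log regular local rings is the standard one, and the computation identifying $R'$ with $k[[Q]][[t_{*}]]$ (resp.\ $V[[Q]][[t_{*}]]/(\theta')$) is correct once $Q$ is sharp.

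The gap is the reduction to $Q$ sharp. You assert that $Q^{\times}$ contributes a factor $\bG_{m}^{r}$, but $Q^{\times}$ need not be free: for an fs monoid the torsion subgroup of $Q^{\mathrm{gp}}$ sits inside $Q^{\times}$ and need not vanish, and when that torsion has order divisible by the residue characteristic the conclusion of the lemma actually fails. Concretely, take $R=k[[t]]$ with $k$ separably closed of characteristic $p$, $P=\bN$ charted by $1\mapsto t$, and $Q=\bN\times\bZ/p$ with $P\hookrightarrow Q$ the inclusion into the first factor; this map is Kummer and $Q$ is fs, but $R'=k[[t]][\zeta]/(\zeta^{p}-1)=k[[t]][\zeta]/(\zeta-1)^{p}$ is nonreduced, hence not log regular. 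So either the statement in \cite{ino23} carries an extra hypothesis (e.g.\ $Q$ sharp, or the order of $Q^{\mathrm{gp}}/P^{\mathrm{gp}}$ invertible on $X$) that was dropped in transcription, or some convention there excludes such $Q$. In any event the present paper only invokes the lemma with $Q=P^{1/n}$, which is sharp, so your argument covers every application. Two small points worth making explicit in the sharp case: that $R'$ is again local with $Q\isom\overline{\cM}_{X',\bar{x}'}$ (so Kato's theorem applies verbatim to $R'$), and that $V[[Q]][[t_{*}]]$ is a domain---this uses $Q^{\mathrm{gp}}$ torsion-free, another place where sharpness enters---so that $\theta'$ is automatically a non-zerodivisor.
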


\begin{lem}[{\cite[Lemma 4.4]{ino23}}] \label{log reg dvr}
    Let $(X,\cM_{X})$ be a log regular log scheme whose underlying scheme is the spectrum of a strict local discrete valuation ring. Then the log structure $\cM_{X}$ is either the trivial one or the one defined by the unique closed point of $X$.
\end{lem}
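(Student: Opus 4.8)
The plan is to deduce everything from Proposition \ref{log reg}(3): since $(X,\cM_{X})$ is log regular, the log structure $\cM_{X}$ coincides with the subsheaf of $\cO_{X}$ consisting of sections invertible on the interior $U$ of $(X,\cM_{X})$. Thus $\cM_{X}$ is completely determined by $U$, and it suffices to determine the possibilities for $U$.

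Because $(X,\cM_{X})$ is log regular, $U$ is a dense open subset of $X$. Writing $X=\Spec(R)$, the hypothesis that $R$ is a discrete valuation ring means that $X$ consists of exactly two points, the unique closed point $x$ and the generic point $\eta$; hence the only dense open subsets of $X$ are $X$ itself and $X\setminus\{x\}$. Therefore either $U=X$ or $U=X\setminus\{x\}$.

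If $U=X$, then by Proposition \ref{log reg}(3) the log structure $\cM_{X}$ is the subsheaf of $\cO_{X}$ consisting of sections invertible on all of $X$, i.e.\ $\cM_{X}=\cO_{X}^{\times}$, which is the trivial log structure. If $U=X\setminus\{x\}$, then by Proposition \ref{log reg}(3) the log structure $\cM_{X}$ is the subsheaf of $\cO_{X}$ consisting of sections invertible on $X\setminus\{x\}$, which is precisely the log structure on $X$ defined by the unique closed point $x$. This establishes the asserted dichotomy. I do not expect any genuine obstacle in carrying this out: the whole argument is contained in Proposition \ref{log reg}(3) together with the elementary description of the topology of the spectrum of a discrete valuation ring, and the only point requiring a word of care is that ``the log structure defined by the unique closed point'' is by definition nothing but the subsheaf of $\cO_{X}$ of functions invertible on the complement of that point, so that the second case matches the statement on the nose. (Alternatively, one could argue directly with a chart $P\to\cM_{X}$ realizing $P\cong\cM_{X,\bar x}/\cO_{X,\bar x}^{\times}$: the log regularity condition $\dim(\cO_{X,\bar x})=\dim(\cO_{X,\bar x}/I(\bar x))+\rk(P^{\mathrm{gp}})$ forces, since $\dim(\cO_{X,\bar x})=1$ and $\cO_{X,\bar x}/I(\bar x)$ must be a field or all of $\cO_{X,\bar x}$, that either $P=0$ or $P\cong\bN$ with a generator mapping to a uniformizer; but the route via Proposition \ref{log reg}(3) is cleaner.)
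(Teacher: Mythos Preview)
Your argument is correct. The paper does not supply its own proof of this lemma; it simply records the statement with a citation to \cite[Lemma 4.4]{ino23}. Your route via Proposition~\ref{log reg}(3) is clean and complete: log regularity forces the interior $U$ to be dense, the spectrum of a discrete valuation ring has exactly two dense open subsets, and Proposition~\ref{log reg}(3) then pins down $\cM_{X}$ uniquely from $U$. The strict-henselian hypothesis is not actually used in your argument (nor is it needed for this conclusion), so your proof in fact establishes the claim for any discrete valuation ring. The alternative you sketch at the end, via the dimension identity $\dim\cO_{X,\bar x}=\dim(\cO_{X,\bar x}/I(\bar x))+\rk(P^{\mathrm{gp}})$, is closer in spirit to how one would prove this from scratch without invoking the structure theorem in Proposition~\ref{log reg}(3); the only additional work there is checking that $P\cong\bN$ with generator mapping to a uniformizer really does yield the divisorial log structure, which is immediate. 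Either route is fine; yours is shorter given the machinery already in place.
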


\section{Degeneration theory of abelian schemes}\label{sec3}

The goal of this section is to reinterpret the degeneration theory of abelian schemes established by Mumford, Falting-Chai, and Lan in \cites{fc90,lan13} in terms of log $1$-motives (Proposition \ref{log deg theory}).

\subsection{$1$-motives}

Let $S$ be a base scheme. A commutative group scheme $G$ of finite presentation over $S$ is called a \emph{semi-abelian scheme} if every geometric fiber of $G$ is written as an extension of an abelian variety by a torus. We say that a semi-abelian scheme $G$ is \emph{split} if there is an exact sequence $0\to T\to G\to A\to 0$, where $T$ is a torus and $A$ is an abelian scheme over $S$. This exact sequence is unique up to a unique isomorphism if it exists, and $T$ (resp. $A$) is called the \emph{torus part} (resp. \emph{abelian part}) of $G$.

\begin{dfn}[$1$-motives, {\cite[D\'{e}finition 10.1.2 and Variante 10.1.10]{del74}}]
    A \emph{$1$-motive} over $S$ is a morphism $\cQ=(Y\stackrel{u}{\to} G)$ of \'{e}tale sheaves, where $Y$ is a locally constant sheaf of free $\bZ$-modules of finite rank on $S_{\et}$ and $G$ is a split semi-abelian scheme over $S$. 
\end{dfn}

Let $\cQ=(Y\stackrel{u}{\to} G)$ be a $1$-motive over $S$ and $T$ (resp.~$A$) be the torus part (resp.~the abelian part) of $G$. We let $c\colon Y\to A$ denote the composite of $Y\stackrel{u}{\to} G\twoheadrightarrow A$. Let $X$ denote the character group sheaf of $T$. The extension class corresponding to $G$ belongs to
\[
\mathrm{Ext}^{1}_{S_{\et}}(A,T)\cong \mathrm{Hom}_{S_{\et}}(X,\mathcal{E}xt^{1}_{S_{\et}}(A,\bG_{m}))\cong \mathrm{Hom}_{S_{\et}}(X,A^{\vee}).
\]
This gives a group homomorphism $c^{\vee}\colon X\to A^{\vee}$. Take $x\in X(S)$. Taking the pushout along $x\colon T\to \bG_{m}$ and the pullback along $c\colon Y\to A$ for the exact sequence $0\to T\to G\to A\to 0$ gives an exact sequence
\[
0\to \bG_{m}\to (c\times c^{\vee}(x))^{*}\cP_{A}\to Y\to 0,
\]
where $\cP_{A}$ is the Poincar\'{e} biextension over $A\times A^{\vee}$ and $c\times c^{\vee}(x)$ denotes a map $Y\to A\times A^{\vee}$ given by $y\mapsto c(y)\times c^{\vee}(x)$. Then $u$ induces a section of this exact sequence. By varying $x\in X(S)$, sections defined in this way are totalized into a trivialization of a $\bG_{m}$-biextension over $Y\times X$
\[
\tau\colon 1_{Y\times X}\isom (c\times c^{\vee})^{*}\cP_{A}.
\]
By construction, we can recover the $1$-motive $\cQ$ from the tuple $(X,Y,A,c,c^{\vee},\tau)$. As a summary, we get the following lemma.

\begin{lem}[The description of $1$-motives of a symmetric form]\label{symm description of 1mot}
    Consider the category of tuples $(X,Y,A,c,c^{\vee},\tau)$ consisting of the following objects:
    \begin{itemize}
        \item $X$ and $Y$ are locally constant sheaves of free $\bZ$-modules of finite rank on $S_{\et}$;
        \item $A$ is an abelian scheme over $S$
        \item $c\colon Y\to A$ and $c^{\vee}\colon X\to A^{\vee}$ are group homomorphisms;
        \item $\tau\colon 1_{Y\times X}\isom (c\times c^{\vee})^{*}\cP_{A}$ is a trivialization of a $\bG_{m}$-biextension over $Y\times X$.
    \end{itemize}
    Morphisms $f\colon (X_{1},Y_{1},A_{1},c_{1},c_{1}^{\vee},\tau_{1})\to (X_{2},Y_{2},A_{2},c_{2},c_{2}^{\vee},\tau_{2})$ are group homomorphisms $f^{\mathrm{mult}}\colon X_{2}\to X_{1}$, $f^{\mathrm{ab}}\colon A_{1}\to A_{2}$, and $f^{\et}\colon Y_{1}\to Y_{2}$ satisfying the following conditions:
    \begin{itemize}
        \item $f^{\mathrm{ab}}c_{1}=c_{2}f^{\et}$ and $c_{1}^{\vee}f^{\mathrm{mult}}=(f^{\mathrm{ab}})^{\vee}c_{2}^{\vee}$;
        \item $(\mathrm{id}_{X_{1}}\times f^{\mathrm{mult}})^{*}\tau_{1}$ and $(f^{\et}\times \mathrm{id}_{X_{2}})^{*}\tau_{2}$ are equal via the isomorphisms
        \begin{align*}
            (\mathrm{id}_{X_{1}}\times f^{\mathrm{mult}})^{*}(c_{1}\times c_{1}^{\vee})^{*}\cP_{A_{1}}&\cong (c_{1}\times c_{2}^{\vee})^{*}(\mathrm{id}_{A_{1}}\times (f^{\mathrm{ab}})^{\vee})^{*}\cP_{A_{1}} \\
            &\cong (c_{1}\times c_{2}^{\vee})^{*}(f^{\mathrm{ab}}\times \mathrm{id}_{A_{2}^{\vee}})^{*}\cP_{A_{2}} \\
            &\cong (f^{\et}\times \mathrm{id}_{X_{2}})^{*}(c_{2}\times c_{2}^{\vee})^{*}\cP_{A_{2}}.
        \end{align*}
    \end{itemize}
    Then this category is naturally equivalent to the category of $1$-motives over $S$.
\end{lem}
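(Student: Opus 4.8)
The plan is to construct two mutually inverse functors between the category of $1$-motives over $S$ and the category of symmetric tuples $(X,Y,A,c,c^{\vee},\tau)$, and then to check that the bookkeeping of morphisms matches up. Essentially all the geometric content has already been assembled in the paragraph preceding the statement; what remains is to organize it into a clean categorical equivalence, so I expect the proof to be short but somewhat notation-heavy.

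First I would spell out the functor $\cQ\mapsto (X,Y,A,c,c^{\vee},\tau)$. Given a $1$-motive $\cQ=(Y\stackrel{u}{\to}G)$, the torus part $T$ and abelian part $A$ of $G$ are canonically attached to the split semi-abelian scheme $G$ (the splitting being unique up to unique isomorphism), so $X\coloneqq\sHom_{S_{\et}}(T,\bG_{m})$, $Y$, and $A$ are functorial in $\cQ$. The homomorphism $c\colon Y\to A$ is the composite $Y\stackrel{u}{\to}G\twoheadrightarrow A$, and $c^{\vee}\colon X\to A^{\vee}$ comes from the extension class of $G$ under the canonical identifications $\mathrm{Ext}^{1}_{S_{\et}}(A,T)\cong\Hom_{S_{\et}}(X,\mathcal{E}xt^{1}_{S_{\et}}(A,\bG_{m}))\cong\Hom_{S_{\et}}(X,A^{\vee})$ (using $\mathcal{E}xt^{1}_{S_{\et}}(A,\bG_{m})\cong A^{\vee}$ and the fact that $\mathcal{H}om_{S_{\et}}(A,\bG_{m})=0$). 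The trivialization $\tau$ is obtained, as in the excerpt, by pushing out $0\to T\to G\to A\to 0$ along each character $x\in X(S)$, pulling back along $c$, observing that $u$ yields a section of the resulting extension of $Y$ by $\bG_{m}$, and totalizing these sections over $x$ into an isomorphism of biextensions $1_{Y\times X}\isom(c\times c^{\vee})^{*}\cP_{A}$; since all constructions commute with \'{e}tale localization on $S$ and with the group structures involved, $\tau$ is a biextension trivialization and is functorial.

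Next I would construct the functor in the other direction. Given a tuple $(X,Y,A,c,c^{\vee},\tau)$, the homomorphism $c^{\vee}\colon X\to A^{\vee}=\mathcal{E}xt^{1}_{S_{\et}}(A,\bG_{m})$ together with $X=\sHom_{S_{\et}}(T,\bG_{m})$ (for $T\coloneqq\sHom_{S_{\et}}(X,\bG_{m})$, a torus since $X$ is \'{e}tale-locally free of finite rank) determines a class in $\mathrm{Ext}^{1}_{S_{\et}}(A,T)$, hence a split semi-abelian scheme $G$; the trivialization $\tau$, restricted to each $x\in X(S)$ via pullback along $c$, provides a section of $0\to\bG_{m}\to(c\times c^{\vee}(x))^{*}\cP_{A}\to Y\to 0$, and the biextension compatibility of $\tau$ guarantees that these sections are additive in $x$ and thus glue to a homomorphism $u\colon Y\to G$ lifting $c$. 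This produces a $1$-motive $(Y\stackrel{u}{\to}G)$, functorially. I would then check the two composites are naturally isomorphic to the identities: in one direction this is immediate from the reconstruction recipe in the excerpt ("we can recover the $1$-motive $\cQ$ from the tuple"), and in the other direction one verifies that starting from a tuple, extracting the torus/abelian parts and the homomorphisms and trivialization of the reconstructed $1$-motive returns the original data, which is a formal check using the canonical identifications.

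Finally, the morphism condition: I would verify that a morphism of $1$-motives, i.e.\ a pair of compatible homomorphisms $G_{1}\to G_{2}$ and $Y_{1}\to Y_{2}$ over $u_{1},u_{2}$, induces exactly the triple $(f^{\mathrm{mult}},f^{\mathrm{ab}},f^{\et})$ with the stated compatibilities — $f^{\mathrm{ab}}$ and $f^{\et}$ being the maps on abelian parts and on $Y$, and $f^{\mathrm{mult}}\colon X_{2}\to X_{1}$ the map on characters induced by $T_{1}\to T_{2}$ — and conversely that such a triple determines a unique morphism of $1$-motives. The compatibility $f^{\mathrm{ab}}c_{1}=c_{2}f^{\et}$ expresses that the map respects the projections to the abelian parts, $c_{1}^{\vee}f^{\mathrm{mult}}=(f^{\mathrm{ab}})^{\vee}c_{2}^{\vee}$ that it respects extension classes, and the displayed equality of pulled-back trivializations that it respects the sections $u_{i}$; functoriality of the Poincar\'{e} biextension under $f^{\mathrm{ab}}$ (the identity $(\mathrm{id}_{A_{1}}\times(f^{\mathrm{ab}})^{\vee})^{*}\cP_{A_{1}}\cong(f^{\mathrm{ab}}\times\mathrm{id}_{A_{2}^{\vee}})^{*}\cP_{A_{2}}$) is what makes the diagram in the statement meaningful. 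The main obstacle, such as it is, is purely organizational: keeping the numerous canonical isomorphisms ($\mathcal{E}xt^{1}(A,\bG_{m})\cong A^{\vee}$, $\Hom(A,\bG_{m})=0$, compatibility of push-out/pull-back of biextensions, duality $(f^{\mathrm{ab}})^{\vee}$) coherent enough that additivity of sections in $x$ and the morphism compatibilities fall out without sign or direction errors. I would if needed reduce to the case where $S$ is strictly henselian so that $X$ and $Y$ are constant and the computations become explicit, then descend; but the argument goes through directly on $S_{\et}$.
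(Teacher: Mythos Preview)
Your proposal is correct and follows exactly the approach the paper takes: the paper does not give a separate proof of this lemma at all, but rather explains the construction of $(X,Y,A,c,c^{\vee},\tau)$ from $\cQ$ in the paragraph immediately preceding the statement and then declares the lemma ``as a summary,'' noting only that ``by construction, we can recover the $1$-motive $\cQ$ from the tuple.'' Your write-up simply makes explicit the inverse functor and the morphism compatibilities that the paper leaves to the reader.
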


\begin{rem}
    We also refer to an object of the category in Lemma \ref{symm description of 1mot} as a $1$-motive over $S$.
\end{rem}

\begin{dfn}[Polarization on $1$-motives]\label{def of pol on 1mot}
    Let $\cQ=(X,Y,A,c,c^{\vee},\tau)$ be a $1$-motive over $S$. The tuple $\cQ^{\vee}\coloneqq (Y,X,A^{\vee},c^{\vee},c,\tau^{\vee})$ is called the \emph{dual $1$-motive} of $\cQ$. Here, $\tau^{\vee}$ is the composite of the isomorphisms of $\bG_{m}$-biextensions over $X\times Y$ defined in the following way:
    \[
    1_{X\times Y}\stackrel{s^{*}\tau}{\isom} s^{*}(c\times c^{\vee})^{*}\cP_{A}\cong (c^{\vee}\times c)^{*}t^{*}\cP_{A}\cong (c^{\vee}\times c)^{*}\cP_{A^{\vee}},
    \]
    where $s\colon X\times Y\to Y\times X$ and $t\colon A^{\vee}\times A\to A\times A^{\vee}$ are the switching maps.
    
    Let $T^{\vee}$ denote the torus over $S$ whose character group is $Y$. The group homomorphism $c$ corresponds to a split semi-abelian scheme $G^{\vee}$ with an exact sequence $0\to T^{\vee}\to G^{\vee}\to A^{\vee}\to 0$. Then $\tau^{\vee}$ gives a group homomorphism $u^{\vee}\colon X\to G^{\vee}$, which corresponds to the dual $1$-motive $\cQ^{\vee}$ via the equivalence in Lemma \ref{symm description of 1mot}.
    
    A \emph{polarization} on $\cQ$ is a morphism $\lambda\colon \cQ\to \cQ^{\vee}$ such that the following conditions are satisfied:
    \begin{itemize}
        \item $\lambda^{\mathrm{mult}}=\lambda^{\et}\colon Y\to X$ and these maps induce an isomorphism $Y\otimes_{\bZ} \bQ\isom X\otimes_{\bZ} \bQ$;
        \item $\lambda^{\mathrm{ab}}\colon A\to A^{\vee}$ is a polarization on the abelian scheme $A$.
    \end{itemize}
\end{dfn}

\subsection{Log $1$-motives}

Next, we consider the log version of $1$-motives. Let $(S,\cM_{S})$ be a locally noetherian fs log scheme.

Let $T$ be a torus over $S$ with character group $X$. We define a sheaf $T_{\mathrm{log}}$ on $(S,\cM_{S})_{\mathrm{kfl}}$ by
\[
T_{\mathrm{log}}\coloneqq \mathcal{H}om_{(S,\cM_{S})_{\mathrm{kfl}}}(X,\bG_{m,\mathrm{log}}).
\]
The natural injection $\bG_{m}\hookrightarrow \bG_{m,\mathrm{log}}$ induces an injection $T\hookrightarrow T_{\mathrm{log}}$.
More generally, for a split semi-abelian scheme $G$ over $S$ with torus part $T$ and abelian part $A$, we define the sheaf $G_{\mathrm{log}}$ on $(S,\cM_{S})_{\mathrm{kfl}}$ by the following pushout diagram:
\[
\begin{tikzcd}
    T \ar[r,hook] \ar[d,hook] & T_{\mathrm{log}} \ar[d] \\
    G \ar[r] & G_{\mathrm{log}}.
\end{tikzcd}
\]
Then we have an exact sequence $0\to T_{\mathrm{log}}\to G_{\mathrm{log}}\to A\to 0$ of sheaves on $(S,\cM_{S})_{\mathrm{kfl}}$. 

\begin{lem}
    The restriction to the small \'{e}tale site $S_{\et}$ gives an exact sequence of sheaves on $S_{\et}$
    \[
    0\to (T_{\mathrm{log}})|_{S_{\et}}\to (G_{\mathrm{log}})|_{S_{\et}}\to A\to 0.
    \]
\end{lem}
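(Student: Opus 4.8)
The plan is to treat left exactness and surjectivity of the restricted sequence separately; only the latter has real content, and it reduces to the smoothness of $G\to A$.

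First I would recall the relevant functoriality. The small étale site $S_{\et}$ sits inside $(S,\cM_{S})_{\mathrm{kfl}}$ via the continuous functor sending a strict étale $U\to S$ to $(U,\cM_{U})\to(S,\cM_{S})$; since a strict étale covering is a kfl covering, restricting a kfl sheaf along this functor yields a sheaf on $S_{\et}$, and the resulting restriction functor commutes with all limits, as limits of sheaves are computed objectwise. In particular it is left exact. Note also that the kfl sheaf $A$ appearing in the sequence is $(V,\cM_{V})\mapsto \Hom_{S}(V,A)$, so its restriction to $S_{\et}$ is exactly the étale sheaf represented by the abelian scheme $A$, and likewise $G$ restricts to the étale sheaf represented by the scheme $G$. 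Applying the restriction functor to $0\to T_{\mathrm{log}}\to G_{\mathrm{log}}\to A\to 0$ — whose exactness as kfl sheaves already gives, for every $(V,\cM_{V})$, that $0\to T_{\mathrm{log}}(V)\to G_{\mathrm{log}}(V)\to A(V)$ is exact — immediately yields exactness of $0\to (T_{\mathrm{log}})|_{S_{\et}}\to (G_{\mathrm{log}})|_{S_{\et}}\to A$ as a sequence of étale sheaves on $S$.

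It then remains to prove that $(G_{\mathrm{log}})|_{S_{\et}}\to A$ is an epimorphism of étale sheaves. This morphism factors through the canonical map $G\to (G_{\mathrm{log}})|_{S_{\et}}$, so it suffices to show that $G\to A$ is an epimorphism of étale sheaves on $S$. But $G\to A$ exhibits $G$ as a torsor under the smooth affine $S$-group scheme $T$, hence is a smooth surjective morphism of schemes, locally of finite presentation; such a morphism admits sections étale-locally on the target, by the existence of étale quasi-sections for smooth surjective morphisms. Concretely: given a strict étale $U\to S$ and $a\in A(U)$, the pullback $G\times_{A,a}U\to U$ is smooth surjective, hence has a section after some étale covering $U'\to U$; this section lifts $a|_{U'}$ to $G(U')$ and therefore to $G_{\mathrm{log}}(U',\cM_{U'})$. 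This yields the desired surjectivity and completes the argument.

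I do not expect a genuine obstacle. The only point to be careful about is that surjectivity of $G_{\mathrm{log}}\to A$ as kfl sheaves is a priori witnessed only by kfl coverings, so one must use the smoothness of $T$ (equivalently, that an fppf $T$-torsor is an étale $T$-torsor) in order to re-witness it by strict étale coverings after restriction. Everything else — the left exactness, and the identification of the restrictions of $A$ and $G$ with the obvious étale sheaves — is formal.
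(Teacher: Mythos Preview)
Your proof is correct, and it follows a genuinely different route from the paper's. The paper argues cohomologically: after passing to an \'etale cover on which $T$ is split, it invokes the vanishing $R^{1}\epsilon_{*}T_{\mathrm{log}}=0$ from \cite[Theorem 5.1]{kat21} (where $\epsilon\colon (S,\cM_{S})_{\mathrm{kfl}}\to S_{\et}$ is the natural morphism of sites), and then the long exact sequence for $\epsilon_{*}$ applied to $0\to T_{\mathrm{log}}\to G_{\mathrm{log}}\to A\to 0$ immediately yields the short exact sequence on $S_{\et}$. Your argument avoids this vanishing result entirely by noting that the surjection $G_{\mathrm{log}}\to A$ factors the smooth surjection $G\to A$, which already has \'etale-local sections; this is more elementary and self-contained, needing nothing specific to log geometry beyond the definition of $G_{\mathrm{log}}$. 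The paper's method, on the other hand, is more systematic: the vanishing of $R^{1}\epsilon_{*}T_{\mathrm{log}}$ would handle arbitrary kfl extensions of $A$ by $T_{\mathrm{log}}$, whereas your argument relies on the particular feature that this extension is pushed out from a $T$-torsor.
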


\begin{proof}
    By working \'{e}tale locally on $S$, we may assume that $T$ is a split torus. Consider the morphism of sites $\epsilon\colon (S,\cM_{S})_{\mathrm{kfl}}\to S_{\et}$ induced from the inclusion functor $S_{\et}\hookrightarrow (S,\cM_{S})_{\mathrm{kfl}}$. By \cite[Theorem 5.1]{kat21}, we have $R^{1}\epsilon_{*}T_{\mathrm{log}}=0$. Therefore, applying $\epsilon_{*}$ to the exact sequence $0\to T_{\mathrm{log}}\to G_{\mathrm{log}}\to A\to 0$ gives the exact sequence in the assertion.
\end{proof}

\begin{dfn}[Log $1$-motives, {\cite[Definition 2.2]{kkn08b}}]
    A \emph{log $1$-motive} over $(S,\cM_{S})$ is a morphism $\cQ_{\mathrm{log}}\coloneqq (Y\stackrel{u}{\to} G_{\mathrm{log}})$ of sheaves of abelian groups on $(S,\cM_{S})_{\mathrm{kfl}}$, where $Y$ is a strict \'{e}tale locally constant sheaf of free $\bZ$-modules of finite rank on $(S,\cM_{S})_{\mathrm{kfl}}$ and $G$ is a split semi-abelian scheme over $S$.
\end{dfn}

For an abelian scheme $A$ over $S$, we let $\cP_{A}^{\mathrm{log}}$ denote the $\bG_{m,\mathrm{log}}$-biextension over $A\times A^{\vee}$ defined as the base change of the Poincar\'{e} biextension $\cP_{A}$ along $\bG_{m}\to \bG_{m,\mathrm{log}}$. In the same way as Lemma \ref{symm description of 1mot}, we get the following lemma.

\begin{lem}[The description of log $1$-motives of a symmetric form]\label{symm desc of log 1mot}
    Consider the category of tuples $(X,Y,A,c,c^{\vee},\tau)$ consisting of the following objects:
    \begin{itemize}
        \item $(X,Y,A,c,c^{\vee})$ is the same as in Lemma \ref{symm description of 1mot};
        \item $\tau\colon 1_{Y\times X}\isom (c\times c^{\vee})^{*}\cP_{A}^{\mathrm{log}}$ is a trivialization of a $\bG_{m,\mathrm{log}}$-biextension over $Y\times X$.
    \end{itemize}
    Morphisms are also defined in the same way as in Lemma \ref{symm description of 1mot}. Then this category is naturally equivalent to the category of log $1$-motives over $(S,\cM_{S})$.
\end{lem}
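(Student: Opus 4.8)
The plan is to run the log analogue of the construction that precedes Lemma \ref{symm description of 1mot}, replacing $\bG_m$ by $\bG_{m,\mathrm{log}}$, the Poincar\'{e} biextension $\cP_A$ by $\cP_A^{\mathrm{log}}$, and the small \'{e}tale site $S_{\et}$ by the kfl site $(S,\cM_S)_{\mathrm{kfl}}$. Starting from a log $1$-motive $\cQ_{\mathrm{log}}=(Y\stackrel{u}{\to}G_{\mathrm{log}})$, I would first extract the tuple: the split semi-abelian scheme $G$ is part of the datum, so its torus part $T$, character sheaf $X$, and abelian part $A$ come for free; I set $c\colon Y\to A$ to be the composite of $u$ with the projection $G_{\mathrm{log}}\twoheadrightarrow A$ furnished by the exact sequence $0\to T_{\mathrm{log}}\to G_{\mathrm{log}}\to A\to 0$, and I let $c^{\vee}\colon X\to A^{\vee}$ be the classifying homomorphism of the classical extension $0\to T\to G\to A\to 0$ under $\Ext^1_{S_{\et}}(A,T)\cong \Hom_{S_{\et}}(X,A^{\vee})$. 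For the trivialization, for each section $x$ of $X$ I would push the sequence $0\to T_{\mathrm{log}}\to G_{\mathrm{log}}\to A\to 0$ out along $x\colon T_{\mathrm{log}}\to \bG_{m,\mathrm{log}}$ and pull the result back along $c$; by the definition of $G_{\mathrm{log}}$ and of $c^{\vee}$, together with the fact that $\cP_A^{\mathrm{log}}$ is inflated from $\cP_A$, this extension is canonically identified with $(c\times c^{\vee}(x))^{*}\cP_A^{\mathrm{log}}$, and $u$ provides a homomorphic section of it. Letting $x$ vary and using $T_{\mathrm{log}}=\mathcal{H}om(X,\bG_{m,\mathrm{log}})$, these sections assemble into a bimultiplicative trivialization $\tau\colon 1_{Y\times X}\isom (c\times c^{\vee})^{*}\cP_A^{\mathrm{log}}$.

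Conversely, from a tuple $(X,Y,A,c,c^{\vee},\tau)$ I would reconstruct the log $1$-motive: let $T$ be the torus with character sheaf $X$, use $c^{\vee}$ and $\Ext^1_{S_{\et}}(A,T)\cong \Hom_{S_{\et}}(X,A^{\vee})$ to produce the classical split semi-abelian scheme $G$, form $G_{\mathrm{log}}$ via the pushout diagram defining it, and recover $u\colon Y\to G_{\mathrm{log}}$ from $\tau$. The key observation, identical to the classical situation, is that, because $T_{\mathrm{log}}=\mathcal{H}om(X,\bG_{m,\mathrm{log}})$, a homomorphic lift of $c$ along $G_{\mathrm{log}}\twoheadrightarrow A$ is the same datum as a bilinear family, indexed by the sections $x$ of $X$, of homomorphic sections of the extensions $(c\times c^{\vee}(x))^{*}\cP_A^{\mathrm{log}}$ of $Y$ by $\bG_{m,\mathrm{log}}$, which is in turn precisely a bimultiplicative trivialization of the $\bG_{m,\mathrm{log}}$-biextension $(c\times c^{\vee})^{*}\cP_A^{\mathrm{log}}$ over $Y\times X$. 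I would then check that the two constructions are mutually inverse and that they carry the morphisms of Lemma \ref{symm description of 1mot} (with $\cP_A$ replaced by $\cP_A^{\mathrm{log}}$) to morphisms of log $1$-motives in both directions.

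The log-specific ingredients needed for all of this, namely the exact sequence $0\to T_{\mathrm{log}}\to G_{\mathrm{log}}\to A\to 0$ of kfl sheaves and the fact that $\cP_A^{\mathrm{log}}$ is inflated from $\cP_A$, are already in place, so I expect no new conceptual difficulty beyond the classical case. The main labour, and hence the only real obstacle, should be purely formal bookkeeping: tracking that the chain of canonical isomorphisms among pushouts, pullbacks, and restrictions of $\cP_A^{\mathrm{log}}$ (and of $\cP_{A'}^{\mathrm{log}}$ when treating morphisms) is compatible with the two partial composition laws of the biextensions and with the morphism conditions spelled out in Lemma \ref{symm description of 1mot} — exactly the verification one performs for $\cP_A$ there, now carried out one site higher.
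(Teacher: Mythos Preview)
Your proposal is correct and follows exactly the approach the paper takes: the paper simply states that the lemma is obtained ``in the same way as Lemma \ref{symm description of 1mot}'' and gives no further proof, and your plan is precisely the log analogue of the construction preceding that lemma with $\bG_m$, $\cP_A$, and $S_{\et}$ replaced by $\bG_{m,\mathrm{log}}$, $\cP_A^{\mathrm{log}}$, and $(S,\cM_S)_{\mathrm{kfl}}$.
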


\begin{rem}
    We also refer to an object of the category in Lemma \ref{symm desc of log 1mot} as a log $1$-motive over $(S,\cM_{S})$.
\end{rem}

\begin{dfn}[Monodromy pairings associated with log $1$-motives, {\cite[(2.3)]{kkn08b}}]\label{monodromy pairing ass with log 1mot}
    Let $\cQ_{\mathrm{log}}=(Y\stackrel{u}{\to} G_{\mathrm{log}})=(X,Y,A,c,c^{\vee},\tau)$ be a log $1$-motive over $(S,\cM_{S})$. We have the following commutative diagram whose rows are exact sequences of sheaves on $(S,\cM_{S})_{\mathrm{kfl}}$:
    \[
    \begin{tikzcd}
        0 \ar[r] & T \ar[r] \ar[d] & G \ar[r] \ar[d] & A \ar[r] \ar[d,equal] & 0 \\
        0 \ar[r] & T_{\mathrm{log}} \ar[r] & G_{\mathrm{log}} \ar[r] & A \ar[r] & 0.
    \end{tikzcd}
    \]
    By the snake lemma, we get an isomorphism $T_{\mathrm{log}}/T\isom G_{\mathrm{log}}/G$. The homomorphism $u$ induces a homomorphism 
    \[
    Y\to G_{\mathrm{log}}\twoheadrightarrow G_{\mathrm{log}}/G\cong T_{\mathrm{log}}/T\cong \mathcal{H}om_{(S,\cM_{S})_{\mathrm{kfl}}}(X,\bG_{m,\mathrm{log}}/\bG_{m}),
    \]
    which corresponds to a bilinear pairing 
    \[
    \langle -,-\rangle\colon Y\times X\to \bG_{m,\mathrm{log}}/\bG_{m}.
    \]
    This pairing is called the \emph{monodromy pairing} associated with $\cQ_{\mathrm{log}}$.
    \end{dfn}

\begin{dfn}[Dual on log $1$-motives, {\cite[Definition 2.7.4]{kkn08b}}]\label{def of pol on log 1mot} \noindent

    Let $\cQ_{\mathrm{log}}=(X,Y,A,c,c^{\vee},\tau)$ be a log $1$-motive over $(S,\cM_{S})$. The tuple $\cQ_{\mathrm{log}}^{\vee}\coloneqq (Y,X,A^{\vee},c^{\vee},c,\tau^{\vee})$ is called the \emph{dual log $1$-motive} of $\cQ_{\mathrm{log}}$. Here, the trivialization $\tau^{\vee}$ of the $\bG_{m}^{\mathrm{log}}$-biextension $(c^{\vee}\times c)^{*}\cP_{A^{\vee}}^{\mathrm{log}}$ over $X\times Y$ is defined in the same way as in Definition \ref{def of pol on 1mot}. Then $\tau^{\vee}$ gives a group homomorphism $u^{\vee}\colon X\to G^{\vee}_{\mathrm{log}}$, which corresponds to the dual log $1$-motive $\cQ^{\vee}_{\mathrm{log}}$ via the equivalence in Lemma \ref{symm desc of log 1mot}.
\end{dfn}

We can associate a log finite group scheme with a log $1$-motive by taking $n$-torsion points in an appropriate sense.

\begin{dfn}[{\cite[Definition 3.4]{wz24}}]
    Let $\cQ_{\mathrm{log}}=(Y\stackrel{u}{\to} G_{\mathrm{log}})$ be a log $1$-motive over $(S,\cM_{S})$. For an integer $n\geq 1$,  consider the sheaf of commutative groups on $(S,\cM_{S})_{\mathrm{kfl}}$ defined by
    \[
    \cQ_{\mathrm{log}}[n]\coloneqq H^{-1}((Y\stackrel{u}{\to} G_{\mathrm{log}})\otimes^{\bL}_{\bZ} \bZ/n),
    \]
    where $(Y\stackrel{u}{\to} G_{\mathrm{log}})$ is regarded as a complex of sheaves on $(S,\cM_{S})_{\mathrm{kfl}}$ such that $Y$ lives in the degree $-1$ part. Concretely, we can write
    \[
    \cQ_{\mathrm{log}}[n]=\frac{\mathrm{Ker}(u-(\times n)\colon Y\oplus G_{\mathrm{log}}\to G_{\mathrm{log}})}{\mathrm{Im}((\times n)+u\colon Y\to Y\oplus G_{\mathrm{log}})}.
    \]
\end{dfn}

\begin{lem}\label{lem for weil pairing}
    Let $\cQ_{\mathrm{log}}=(Y\stackrel{u}{\to} G_{\mathrm{log}})=(X,Y,A,c,c^{\vee},\tau)$ be a log $1$-motive over $(S,\cM_{S})$. Let $\pi\colon G_{\mathrm{log}}\to A$ and $\pi^{\vee}\colon G^{\vee}_{\mathrm{log}}\to A^{\vee}$ be natural surjections. Then there are natural trivializations of $\bG_{m,\mathrm{log}}$-biextensions
    \[
    \rho_{1}\colon 1_{G_{\mathrm{log}}\times X}\isom (\pi\times c^{\vee})^{*}\cP_{A}^{\mathrm{log}}, \ \ \ \rho_{2}\colon 1_{Y\times G^{\vee}_{\mathrm{log}}}\isom (c\times \pi^{\vee})^{*}\cP_{A}^{\mathrm{log}},
    \]
    and we have an equality $(u\times \mathrm{id}_{X})^{*}(\rho_{1})=(\mathrm{id}_{Y}\times u^{\vee})^{*}(\rho_{2})=\tau$.
\end{lem}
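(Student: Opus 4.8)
\emph{Plan.} The construction of $\rho_{1}$ rests on the pushout description of $G_{\mathrm{log}}$ together with the classical recipe for $c^{\vee}$. For $x\in X$, evaluation at $x$ is a homomorphism $T_{\mathrm{log}}=\mathcal{H}om_{(S,\cM_{S})_{\mathrm{kfl}}}(X,\bG_{m,\mathrm{log}})\to \bG_{m,\mathrm{log}}$, and I write $x_{*}G_{\mathrm{log}}$ for the pushout of $0\to T_{\mathrm{log}}\to G_{\mathrm{log}}\to A\to 0$ along it, an extension of $A$ by $\bG_{m,\mathrm{log}}$. Since $G_{\mathrm{log}}$ is itself the pushout of $0\to T\to G\to A\to 0$ along $T\to T_{\mathrm{log}}$, one checks readily that $x_{*}G_{\mathrm{log}}$ is the base change along $\bG_{m}\to \bG_{m,\mathrm{log}}$ of the classical pushout $x_{*}G$, and that the canonical homomorphism $G_{\mathrm{log}}\to x_{*}G_{\mathrm{log}}$ lifts $\pi$ through $x_{*}G_{\mathrm{log}}\to A$. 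By the definition of $c^{\vee}$ and of the isomorphism $\mathcal{E}xt^{1}_{S_{\et}}(A,\bG_{m})\cong A^{\vee}$ (realized by $\cP_{A}$), there is for each $x$ a canonical isomorphism $x_{*}G\cong (\id_{A}\times c^{\vee}(x))^{*}\cP_{A}$ of extensions of $A$ by $\bG_{m}$, and these are compatible with the Baer sum in $x$ because $c^{\vee}$ is a group homomorphism. Base changing along $\bG_{m}\to \bG_{m,\mathrm{log}}$, the extensions $x_{*}G_{\mathrm{log}}$ thus assemble into a $\bG_{m,\mathrm{log}}$-biextension $\mathcal{W}$ over $A\times X$ together with an isomorphism of biextensions
\[
\mathcal{W}\isom (\id_{A}\times c^{\vee})^{*}\cP_{A}^{\mathrm{log}}.
\]

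Granting this, I would define $\rho_{1}$ as follows. Pulling back along $\pi\times \id_{X}\colon G_{\mathrm{log}}\times X\to A\times X$ gives an isomorphism of $\bG_{m,\mathrm{log}}$-biextensions $(\pi\times \id_{X})^{*}\mathcal{W}\isom (\pi\times c^{\vee})^{*}\cP_{A}^{\mathrm{log}}$ over $G_{\mathrm{log}}\times X$. The biextension $(\pi\times \id_{X})^{*}\mathcal{W}$ carries a tautological trivialization: its fibre over $(g,x)$ is $(x_{*}G_{\mathrm{log}})_{\pi(g)}$, which contains the image of $g$ under the lift $G_{\mathrm{log}}\to x_{*}G_{\mathrm{log}}$; additivity of this lift in $g$ and its compatibility with the Baer sum in $x$ show that these points form a trivialization of biextensions. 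Transporting it along the isomorphism defines $\rho_{1}\colon 1_{G_{\mathrm{log}}\times X}\isom (\pi\times c^{\vee})^{*}\cP_{A}^{\mathrm{log}}$, and naturality in $\cQ_{\mathrm{log}}$ is clear from the construction. For $\rho_{2}$, apply the same construction to the dual log $1$-motive $\cQ^{\vee}_{\mathrm{log}}=(Y,X,A^{\vee},c^{\vee},c,\tau^{\vee})$ of Definition \ref{def of pol on log 1mot}, obtaining a trivialization $1_{G^{\vee}_{\mathrm{log}}\times Y}\isom (\pi^{\vee}\times c)^{*}\cP_{A^{\vee}}^{\mathrm{log}}$; composing with the switch $s\colon Y\times G^{\vee}_{\mathrm{log}}\to G^{\vee}_{\mathrm{log}}\times Y$ and the canonical isomorphism $\cP_{A^{\vee}}^{\mathrm{log}}\cong t^{*}\cP_{A}^{\mathrm{log}}$ ($t$ the switch of $A^{\vee}\times A$) yields $\rho_{2}\colon 1_{Y\times G^{\vee}_{\mathrm{log}}}\isom (c\times \pi^{\vee})^{*}\cP_{A}^{\mathrm{log}}$.

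It remains to identify $(u\times \id_{X})^{*}\rho_{1}$ and $(\id_{Y}\times u^{\vee})^{*}\rho_{2}$ with $\tau$. Since $\pi u=c$, the pullback $(u\times \id_{X})^{*}\rho_{1}$ is a trivialization of $(c\times c^{\vee})^{*}\cP_{A}^{\mathrm{log}}$ over $Y\times X$, and by construction its value at $(y,x)$ is the image of $u(y)$ under $G_{\mathrm{log}}\to x_{*}G_{\mathrm{log}}$. This is precisely the section of $(c\times c^{\vee}(x))^{*}\cP_{A}^{\mathrm{log}}=c^{*}(x_{*}G_{\mathrm{log}})$ induced by $u$, i.e. the trivialization $\tau$ corresponding to $u$ under the equivalence of Lemma \ref{symm desc of log 1mot}; hence $(u\times \id_{X})^{*}\rho_{1}=\tau$. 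Applying this to the dual log $1$-motive gives the analogous equality for $\cQ^{\vee}_{\mathrm{log}}$, and unwinding the switches that relate $\rho_{2}$, $u^{\vee}$, and $\tau^{\vee}$ to the data of $\cQ^{\vee}_{\mathrm{log}}$ (Definition \ref{def of pol on log 1mot}) gives $(\id_{Y}\times u^{\vee})^{*}\rho_{2}=\tau$.

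The main obstacle is the core isomorphism $\mathcal{W}\isom (\id_{A}\times c^{\vee})^{*}\cP_{A}^{\mathrm{log}}$ of biextensions over $A\times X$: although each slice $x_{*}G\cong (\id_{A}\times c^{\vee}(x))^{*}\cP_{A}$ merely records the point $c^{\vee}(x)\in A^{\vee}\cong \mathcal{E}xt^{1}_{S_{\et}}(A,\bG_{m})$, one must check that the slices glue compatibly with \emph{both} partial group laws, using that $c^{\vee}$ is additive and that the Poincar\'e biextension represents $\mathcal{E}xt^{1}(A,\bG_{m})$ with its biextension structure. Everything else—well-definedness and bi-additivity of the tautological section, and the matching of the dual conventions—is a formal consequence of the pushout description of $G_{\mathrm{log}}$ and of the symmetric descriptions in Lemmas \ref{symm description of 1mot} and \ref{symm desc of log 1mot}.
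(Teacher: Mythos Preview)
Your proposal is correct and follows essentially the same approach as the paper: for each $x\in X$ you form the pushout $x_{*}G_{\mathrm{log}}$ of $0\to T_{\mathrm{log}}\to G_{\mathrm{log}}\to A\to 0$ along evaluation at $x$, identify it with $\cP_{A}^{\mathrm{log}}|_{A\times\{c^{\vee}(x)\}}$, and read off the trivialization from the canonical lift $G_{\mathrm{log}}\to x_{*}G_{\mathrm{log}}$; the paper does exactly this (writing the diagram with rows $0\to T_{\mathrm{log}}\to G_{\mathrm{log}}\to A\to 0$ and $0\to \bG_{m,\mathrm{log}}\to \cP_{A}^{\mathrm{log}}|_{A\times\{c^{\vee}(x)\}}\to A\to 0$) and then simply says these slice-by-slice trivializations ``are totalized into'' $\rho_{1}$. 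Your write-up is in fact slightly more careful than the paper's, since you isolate the one nontrivial verification---that the identifications $x_{*}G\cong(\id_{A}\times c^{\vee}(x))^{*}\cP_{A}$ are compatible with both partial group laws so as to yield a biextension isomorphism---which the paper absorbs into the word ``totalized''.
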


\begin{proof}
    Take a section $x\in X(S,\cM_{S})$. We have the following commutative diagram of exact sequences:
    \[
    \begin{tikzcd}
         0 \ar[r] & T_{\mathrm{log}} \ar[r] \ar[d] & G_{\mathrm{log}} \ar[r] \ar[d] & A \ar[r] \ar[d,equal] & 0 \\
         0 \ar[r] & \bG_{m,\mathrm{log}} \ar[r] & \cP_{A}^{\mathrm{log}}|_{A\times \{c^{\vee}(x)\}} \ar[r] & A \ar[r] & 0,
    \end{tikzcd}
    \]
    where the left vertical map is induced from $x\colon T\to \bG_{m}$. Then the middle vertical map gives a trivialization of the $\bG_{m,\mathrm{log}}$-torsor $((\pi\times c^{\vee})^{*}\cP_{A}^{\mathrm{log}})|_{G_{\mathrm{log}}\times \{x\}}$ on $G_{\mathrm{log}}\times \{x\}\cong G_{\mathrm{log}}$. By varying $x$, the trivializations obtained in this way are totalized into a trivialization of $\bG_{m,\mathrm{log}}$-biextensions $\rho_{1}\colon 1_{G_{\mathrm{log}}\times X}\isom (\pi\times c^{\vee})^{*}\cP_{A}^{\mathrm{log}}$. By construction, $(u\times \mathrm{id}_{X})^{*}(\rho_{1})$ coincides with $\tau$. The remaining assertions are also proved in the same way.
\end{proof}

\begin{construction}[Weil pairings associated with log $1$-motives]
    Let $\cQ_{\mathrm{log}}=(Y\stackrel{u}{\to} G_{\mathrm{log}})=(X,Y,A,c,c^{\vee},\tau)$ be a log $1$-motive over $(S,\cM_{S})$. We shall construct a pairing 
    \[
    e_{\cQ_{\mathrm{log}}[n]}\colon \cQ_{\mathrm{log}}[n]\times \cQ_{\mathrm{log}}^{\vee}[n]\to \bG_{m,\mathrm{log}}
    \]
    as follows:
    Let $q_{1}\coloneqq (y,g)\in (Y\times G_{\mathrm{log}})(S,\cM_{S})$ and $q_{2}\coloneqq (x,h)\in (X\times G_{\mathrm{log}}^{\vee})(S,\cM_{S})$ such that $u(y)=ng$ and $u^{\vee}(x)=nh$. Then there is a unique $e_{\cQ_{\mathrm{log}}[n]}(q_{1},q_{2})\in \bG_{m,\mathrm{log}}(S,\cM_{S})$ fitting into the following commutative diagram of $\bG_{m,\mathrm{log}}$-torsors on $(S,\cM_{S})$:
    \[
    \begin{tikzcd}
        \cP_{A}^{\mathrm{log}}|_{(ng,h)} \ar[r,"\sim"] & \cP_{A}^{\mathrm{log}}|_{(u(y),h)} \ar[r,"\rho_{2}(y{,}h)","\sim"'] & \bG_{m,\mathrm{log}} \ar[dd,"e_{\cQ_{\mathrm{log}}[n]}(q_{1}{,}q_{2})"] \\
        (\cP_{A}^{\mathrm{log}}|_{(g,h)})^{\otimes n} \ar[u,"\sim", sloped] \ar[d,"\sim"',sloped] & & \\
        \cP_{A}^{\mathrm{log}}|_{(g,nh)} \ar[r,"\sim"] & \cP_{A}^{\mathrm{log}}|_{(g,u^{\vee}(x))} \ar[r,"\rho_{1}(g{,}x)","\sim"'] & \bG_{m,\mathrm{log}}.
    \end{tikzcd}
    \]
    Here, the left vertical maps are defined by the $\bG_{m,\mathrm{log}}$-biextension structure on $\cP_{A}^{\mathrm{log}}$. The last assertion of Lemma \ref{lem for weil pairing} implies that $(q_{1},q_{2})\mapsto e_{\cQ_{\mathrm{log}}[n]}(q_{1},q_{2})$ induces a pairing $e_{\cQ_{\mathrm{log}}[n]}\colon \cQ_{\mathrm{log}}[n]\times \cQ_{\mathrm{log}}^{\vee}[n]\to \bG_{m}$ (the bilinearity implies the image is contained in $\bG_{m,\mathrm{log}}[n]=\bG_{m}[n]\subset \bG_{m}$). The pairing $e_{\cQ_{\mathrm{log}}[n]}$ is called the \emph{Weil pairing} associated with the log $1$-motive $\cQ_{\mathrm{log}}$.
\end{construction}

\begin{prop}[cf.~{\cite[Proposition 3.5]{wz24}}]\label{log fin ass to log 1mot}
    For a log $1$-motive $\cQ_{\mathrm{log}}=(Y\stackrel{u}{\to} G_{\mathrm{log}})$ over $(S,\cM_{S})$, the following statements hold.
    \begin{enumerate}
        \item $\cQ_{\mathrm{log}}[n]$ fits into an exact sequence
        \[
        0\to G[n]\to \cQ_{\mathrm{log}}[n]\to Y/nY\to 0
        \]
        of sheaves of abelian groups on $(S,\cM_{S})_{\mathrm{kfl}}$.
        \item $\cQ_{\mathrm{log}}[n]$ is a log finite group scheme over $(S,\cM_{S})$.
        \item The Weil pairing $e_{\cQ_{\mathrm{log}}[n]}$ induces an isomorphism $\cQ_{\mathrm{log}}^{\vee}[n]\isom (\cQ_{\mathrm{log}}[n])^{\vee}$ of log finite group schemes.
        \item For another integer $m\geq 1$, there is a natural exact sequence
        \[
        0\to \cQ_{\mathrm{log}}[m]\to \cQ_{\mathrm{log}}[mn]\to \cQ_{\mathrm{log}}[n]\to 0.
        \]
        In particular, for a prime number $p$, $\cQ_{\mathrm{log}}[p^{\infty}]\coloneqq \bigcup_{n\geq 1}\cQ_{\mathrm{log}}[p^{n}]$ is a log $p$-divisible group over $(S,\cM_{S})$.
    \end{enumerate}
\end{prop}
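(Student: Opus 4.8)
The plan is to establish the four assertions in the order (1), then the common core of (2) and (3), then (4).

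For (1), I would unwind the derived tensor product. Write $\cQ_{\mathrm{log}}$ as the two-term complex $[Y\to G_{\mathrm{log}}]$ placed in degrees $-1,0$ and resolve $\bZ/n$ by $[\bZ\xrightarrow{\times n}\bZ]$; the total complex computing $\cQ_{\mathrm{log}}\otimes^{\bL}_{\bZ}\bZ/n$ then lives in degrees $-2,-1,0$. Its $H^{-2}$ vanishes because $Y$ is torsion free, and its $H^{0}$ vanishes because $\times n\colon G_{\mathrm{log}}\to G_{\mathrm{log}}$ is an epimorphism of kfl sheaves: $\times n\colon A\to A$ is an fppf epimorphism and $\bG_{m,\mathrm{log}}$ is divisible on the kfl site, so $T_{\mathrm{log}}=\sHom(X,\bG_{m,\mathrm{log}})$, and hence $G_{\mathrm{log}}$, is divisible. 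Thus $\cQ_{\mathrm{log}}\otimes^{\bL}_{\bZ}\bZ/n$ is concentrated in degree $-1$, equal to $\cQ_{\mathrm{log}}[n]$, and from the explicit description in the statement the assignment $(y,g)\mapsto y\bmod nY$ is a surjection $\cQ_{\mathrm{log}}[n]\twoheadrightarrow Y/nY$ with kernel $G_{\mathrm{log}}[n]$. Finally, $G[n]\hookrightarrow G_{\mathrm{log}}[n]$ is an isomorphism: comparing the four-term sequences obtained from the snake lemma applied to $\times n$ on $0\to T\to G\to A\to 0$ and on $0\to T_{\mathrm{log}}\to G_{\mathrm{log}}\to A\to 0$ reduces this to $T[n]=T_{\mathrm{log}}[n]$, which holds since $\bG_{m,\mathrm{log}}[n]=\mu_n$; and the latter holds because $\bG_{m,\mathrm{log}}/\bG_m$ is torsion free (it is $\cM_S^{\mathrm{gp}}/\cO_S^{\times}$ étale-locally, which is torsion free for fs log structures).

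For (2) and (3) the crucial point is that, kfl-locally, $\cQ_{\mathrm{log}}$ becomes an honest $1$-motive, compatibly with duality. After a strict étale localization I may assume $X$ and $Y$ are constant and fix an fs chart $P\to\cM_S$. Over the Kummer covering $(S',\cM_{S'})\coloneqq(S,\cM_S)\otimes_{(\bZ[P],P)}(\bZ[P^{1/n}],P^{1/n})$, the induced map $\cM_S^{\mathrm{gp}}/\cO_S^{\times}\to\cM_{S'}^{\mathrm{gp}}/\cO_{S'}^{\times}$ is multiplication by $n$; hence the monodromy homomorphism $Y\xrightarrow{u}G_{\mathrm{log}}\twoheadrightarrow G_{\mathrm{log}}/G=\sHom(X,\bG_{m,\mathrm{log}}/\bG_m)$ becomes $n\bar v$ over $S'$ for some $\bar v\colon Y\to T_{\mathrm{log}}/T$, which lifts to $v\colon Y\to T_{\mathrm{log}}$ because $Y$ is $\bZ$-free. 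Replacing $u$ by $u-nv$ over $S'$ leaves $\cQ_{\mathrm{log}}[n]$ unchanged — a direct check on the explicit formula, via $(y,g)\mapsto(y,g-v(y))$ — and makes it factor through $G$; choosing $v$ compatibly with the symmetric description, the same modification identifies $\cQ_{\mathrm{log}}^{\vee}|_{S'}$ with the dual $1$-motive. Using $(\bG_{m,\mathrm{log}}/\bG_m)[n]=0$ once more, the kfl $[n]$-torsion over $S'$ of the classical complex $[Y\to G]$ coincides with the ordinary $n$-torsion; so $\cQ_{\mathrm{log}}[n]|_{S'}\cong\cQ'[n]$ for an honest $1$-motive $\cQ'$ over $S'$, $\cQ_{\mathrm{log}}^{\vee}[n]|_{S'}\cong(\cQ')^{\vee}[n]$, and $e_{\cQ_{\mathrm{log}}[n]}|_{S'}$ is the classical Weil pairing (the construction of $e_{\cQ_{\mathrm{log}}[n]}$ involves only $\rho_1,\rho_2$, which are unaffected by the modification). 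This proves (2) and (3): $\cQ_{\mathrm{log}}[n]$ and $\cQ_{\mathrm{log}}^{\vee}[n]$ are weak log finite; the classical Weil pairing is perfect, so $e_{\cQ_{\mathrm{log}}[n]}$ identifies $\cQ_{\mathrm{log}}^{\vee}[n]$ with $\sHom_{(S,\cM_S)_{\mathrm{kfl}}}(\cQ_{\mathrm{log}}[n],\bG_m)$, the Cartier dual; and since both $\cQ_{\mathrm{log}}[n]$ and its Cartier dual are then weak log finite, they are representable by finite Kummer log flat log schemes by kfl descent (\cite[Theorem 7.1 and Theorem 8.1]{kat21}), i.e.\ $\cQ_{\mathrm{log}}[n]$ is log finite and the isomorphism of (3) is one of log finite group schemes.

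For (4), tensor $0\to\bZ/m\xrightarrow{\times n}\bZ/mn\to\bZ/n\to0$ with $\cQ_{\mathrm{log}}$ over $\bZ$ and take the long exact cohomology sequence of the resulting distinguished triangle. By the computation in (1), each $\cQ_{\mathrm{log}}\otimes^{\bL}_{\bZ}\bZ/k$ is concentrated in degree $-1$ with $H^{-1}=\cQ_{\mathrm{log}}[k]$, so the sequence collapses to $0\to\cQ_{\mathrm{log}}[m]\to\cQ_{\mathrm{log}}[mn]\to\cQ_{\mathrm{log}}[n]\to0$. Specializing to $m=p^{k}$, $n=p$ and passing to the colimit shows that $\times p$ is surjective on $\cQ_{\mathrm{log}}[p^{\infty}]$, which together with (2) exhibits $\cQ_{\mathrm{log}}[p^{\infty}]$ as a log $p$-divisible group. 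The main obstacle is the reduction in the second step: verifying that a single Kummer covering renders the monodromy $n$-divisible in exactly the form needed, that $u\mapsto u-nv$ does not alter $\cQ_{\mathrm{log}}[n]$ and can be chosen compatible with duality, and that the $n$-torsion of the resulting object over $S'$ is genuinely classical; the descent, representability, and long-exact-sequence arguments are comparatively routine.
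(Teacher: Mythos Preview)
Your treatments of (1) and (4) are fine and match the paper's (it cites \cite{wz24} for (1) and uses the snake lemma for (4), which is your distinguished-triangle argument in disguise).

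The genuine gap is in (2). You conclude that $\cQ_{\mathrm{log}}[n]$ is a \emph{log} finite group scheme by appealing to ``kfl descent'' via \cite[Theorems 7.1 and 8.1]{kat21}, but those results give \emph{strict} fppf descent for finite Kummer log flat log schemes, not kfl descent---the paper says so explicitly each time it invokes them. Worse, your argument, if valid, would show that every weak log finite group scheme is log finite: the Cartier dual of any weak log finite $G$ is again weak log finite (Cartier duality commutes with pullback and preserves $\mathrm{Fin}$), so your reasoning applied to an arbitrary $G$ would force $\mathrm{wFin}=\mathrm{Fin}$. These categories do not coincide in general; Lemma~\ref{log fin grp sch and lcf sheaf} needs $n$ invertible precisely because the inclusion $\mathrm{Fin}\subset\mathrm{wFin}$ is strict otherwise. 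The paper obtains (2) instead from the exact sequence in (1) together with \cite[Proposition 2.3]{kat23}, which handles extensions of this specific shape (a finite \'etale quotient $Y/nY$ by a classical finite flat group $G[n]$) and does not rest on a general kfl-descent principle for representability.

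For (3) your strategy---pass to a Kummer cover, modify $u$ by $nv$ so that it lands in $G$, and compare with the classical Weil pairing of the resulting honest $1$-motive---is a legitimate alternative to the paper's approach, but ``$\rho_1,\rho_2$ are unaffected by the modification'' does not settle it: the isomorphism $(y,g)\mapsto(y,g-v(y))$ changes the points at which $\rho_1,\rho_2$ are evaluated in the defining diagram, so one must still verify that the two pairings agree under this shift and that the lifts $v,v^{\vee}$ can be chosen compatibly with duality. The paper sidesteps this by working directly on $(S,\cM_S)$: it filters $\cQ_{\mathrm{log}}[n]$ by $T[n]\subset G[n]\subset\cQ_{\mathrm{log}}[n]$ (and dually), checks from the construction and Lemma~\ref{lem for weil pairing} that $e_{\cQ_{\mathrm{log}}[n]}$ satisfies the expected orthogonality, and identifies the induced pairings on the three graded pieces with the standard perfect pairings on $Y/nY\times(Y^{\vee}\!\otimes\mu_n)$, on $A[n]\times A^{\vee}[n]$, and on $(X^{\vee}\!\otimes\mu_n)\times X/nX$.
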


\begin{proof}
(1) is proved by W\"{u}rthen-Zhao in \cite[Proposition 3.5]{wz24}. (2) follows from \cite[Proposition 2.3]{kat23} and (1). (4) follows from (1) and the snake lemma. (3) is presumably well-known to experts. We shall give a proof here because we could not find proofs in the literature.

We define a filtration $W_{-2,\cQ_{\mathrm{log}}}\subset W_{-1,\cQ_{\mathrm{log}}}\subset W_{0,\cQ_{\mathrm{log}}}=\cQ_{\mathrm{log}}[n]$ by
\[
W_{-2,\cQ_{\mathrm{log}}}\coloneqq T[n]\cong X^{\vee}\otimes_{\bZ} \mu_{n}, \ \ \ W_{-1,\cQ_{\mathrm{log}}}\coloneqq G[n].
\]
Applying (1) to $\cQ_{\mathrm{log}}^{\vee}$ allows us to define a filtration
\[
W_{-2,\cQ^{\vee}_{\mathrm{log}}}\subset W_{-1,\cQ^{\vee}_{\mathrm{log}}}\subset W_{0,\cQ^{\vee}_{\mathrm{log}}}=\cQ^{\vee}_{\mathrm{log}}[n]
\]
by
\[
W_{-2,\cQ^{\vee}_{\mathrm{log}}}\coloneqq T^{\vee}[n]\cong Y^{\vee}\otimes_{\bZ} \mu_{n}, \ \ \ W_{-1,\cQ^{\vee}_{\mathrm{log}}}\coloneqq G^{\vee}[n].
\] 
It follows from the definition of the Weil pairing and the last assertion of Lemma \ref{lem for weil pairing} that 
\[
e_{\cQ_{\mathrm{log}}[n]}(W_{-1,\cQ_{\mathrm{log}}},W_{-2,\cQ^{\vee}_{\mathrm{log}}})=e_{\cQ_{\mathrm{log}}[n]}(W_{-2,\cQ_{\mathrm{log}}},W_{-1,\cQ^{\vee}_{\mathrm{log}}})=0,
\]
and direct computations imply that natural pairings
\begin{align*}
(W_{0,\cQ_{\mathrm{log}}}/W_{-1,\cQ_{\mathrm{log}}})\times W_{-2,\cQ^{\vee}_{\mathrm{log}}}\cong Y/nY\times (Y^{\vee}\otimes_{\bZ} \mu_{n})\to \mu_{n}, \\
(W_{-1,\cQ_{\mathrm{log}}}/W_{-2,\cQ_{\mathrm{log}}})\times (W_{-1,\cQ^{\vee}_{\mathrm{log}}}/W_{-2,\cQ^{\vee}_{\mathrm{log}}})\cong A[n]\times A^{\vee}[n]\to \mu_{n}, \\
W_{-2,\cQ_{\mathrm{log}}}\times (W_{0,\cQ^{\vee}_{\mathrm{log}}}/W_{-1,\cQ^{\vee}_{\mathrm{log}}})\cong (X^{\vee}\otimes_{\bZ} \mu_{n})\times X/nX\to \mu_{n},
\end{align*}
coincide with the induced pairings from $e_{\cQ_{\mathrm{log}}[n]}$ by construction of the Weil pairing. Since the above three pairings are perfect pairings, $e_{\cQ_{\mathrm{log}}[n]}$ is also a perfect pairing. This proves (3).
\end{proof}

Let $(S,\cM_{S})$ be an fs log regular log scheme. Let $U$ be the interior of $(S,\cM_{S})$ and $j\colon U\hookrightarrow S$ be the inclusion map. The pullback along $j$ gives a morphism of sites $(U,\cM_{U})_{\mathrm{kfl}}\to (S,\cM_{S})_{\mathrm{kfl}}$. The associated direct image functor
\[
\mathrm{Shv}((U,\cM_{U})_{\mathrm{kfl}})\to \mathrm{Shv}((S,\cM_{S})_{\mathrm{kfl}})
\]
is denoted by $j_{\mathrm{kfl},*}$. In the same way, the direct image functor
\[
\mathrm{Shv}(U_{\et})\to \mathrm{Shv}(S_{\et})
\]
induced from $j$ is denoted by $j_{\et,*}$.

Let $G$ be a split semi-abelian scheme over $S$ with torus part $T$ and abelian part $A$. 

\begin{lem}\label{description of Glog}
The natural map $G_{\mathrm{log}}\to j_{\mathrm{kfl},*}(G|_{U})$ of sheaves on $(S,\cM_{S})_{\mathrm{kfl}}$ induces an isomorphism of sheaves on $S_{\et}$
    \[
    G_{\mathrm{log}}|_{S_{\et}}\isom j_{\et,*}(G|_{U}).
    \]
\end{lem}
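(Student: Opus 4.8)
The plan is to reduce to an étale-local statement on $S$ and then exploit log regularity together with the structure of $G_\mathrm{log}$ as an extension $0\to T_\mathrm{log}\to G_\mathrm{log}\to A\to 0$. First I would work étale locally on $S$, so I may assume $T$ is a split torus of rank $r$, hence $T_\mathrm{log}\cong \bG_{m,\mathrm{log}}^{r}$ and $G_\mathrm{log}$ sits in a pushout square exhibiting it as an extension of the abelian scheme $A$ by $T_\mathrm{log}$. Since $A$ is smooth proper over $S$ and $U\subset S$ is dense with $S$ normal (Proposition \ref{log reg}(2)), the valuative-criterion argument gives $A\isom j_{\et,*}(A|_U)$ on $S_\et$; thus by the snake lemma applied to the map of the two short exact sequences it suffices to prove the statement for $T_\mathrm{log}$, i.e. that $T_\mathrm{log}|_{S_\et}\isom j_{\et,*}(T|_U)$, and by taking a basis of $X$ this reduces to the rank-one case $\bG_{m,\mathrm{log}}|_{S_\et}\isom j_{\et,*}(\bG_{m}|_U)$.

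The key step is therefore to identify $\bG_{m,\mathrm{log}}$ with $j_{\et,*}\bG_{m,U}$ as étale sheaves on $S$. By definition $\bG_{m,\mathrm{log}}$ is the strict-étale sheafification of $(Y,\cM_Y)\mapsto \Gamma(Y,\cM_Y)^\mathrm{gp}$, so on the small étale site of $S$ its sections over a strict-étale $V\to S$ are computed by $\cM_V^\mathrm{gp}(V)$ (up to sheafification in the strict étale topology); here I would use that $V$, being strict-étale over the log regular $(S,\cM_S)$, is again log regular with interior $V\cap U$ (stability of log regularity under strict-étale maps, Lemma \ref{log reg stab}/Proposition \ref{log reg}). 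Now Proposition \ref{log reg}(3) says $\cM_V$ is exactly the subsheaf of $\cO_V$ of functions invertible on $V\cap U$; passing to groups, $\cM_V^\mathrm{gp}$ is the sheaf of rational functions on $V$ that are invertible on $V\cap U$, i.e. units on the generic points of each codimension-one component of $V\setminus U$ with the correct sign of valuation — and Lemma \ref{inv func on interior} identifies this with $\cO_U(V\cap U)^\times$ after choosing a chart. Hence $\cM_V^\mathrm{gp}(V)=\Gamma(V\cap U,\cO^\times)$ functorially in $V$, which is precisely the sections of $j_{\et,*}\bG_{m,U}$; taking strict-étale sheafification of both sides (the right-hand side already being a sheaf) yields $\bG_{m,\mathrm{log}}|_{S_\et}\isom j_{\et,*}\bG_{m,U}$.

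Finally I would assemble: tensoring with $X$ (i.e. taking the $r$-fold product) gives $T_\mathrm{log}|_{S_\et}\isom j_{\et,*}(T|_U)$, and then the commutative diagram
\[
\begin{tikzcd}
0\ar[r] & T_\mathrm{log}|_{S_\et}\ar[r]\ar[d,"\sim"] & G_\mathrm{log}|_{S_\et}\ar[r]\ar[d] & A\ar[r]\ar[d,"\sim"] & 0\\
0\ar[r] & j_{\et,*}(T|_U)\ar[r] & j_{\et,*}(G|_U)\ar[r] & j_{\et,*}(A|_U) &
\end{tikzcd}
\]
together with left-exactness of $j_{\et,*}$ and the five lemma forces the middle vertical arrow to be an isomorphism. (One also needs surjectivity of $G_\mathrm{log}|_{S_\et}\to A$ as étale sheaves, which holds since the sequence $0\to T_\mathrm{log}\to G_\mathrm{log}\to A\to 0$ stays exact after restriction to $S_\et$ by the lemma already proven in the excerpt using $R^1\epsilon_* T_\mathrm{log}=0$.) I expect the main obstacle to be the careful bookkeeping in the rank-one identification $\cM^\mathrm{gp}(V)=\cO^\times(V\cap U)$ for every strict-étale $V\to S$ — in particular checking that this holds as an equality of sheaves after strict-étale sheafification and is natural in $V$, which is exactly where log regularity (Proposition \ref{log reg}(2)(3) and Lemma \ref{inv func on interior}) is doing the essential work; once that is in hand the rest is formal diagram-chasing.
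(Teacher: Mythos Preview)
Your proposal is correct and follows essentially the same route as the paper: first reduce to the rank-one case $\bG_{m,\mathrm{log}}|_{S_\et}\cong j_{\et,*}\bG_{m,U}$ using the short exact sequence together with $A\cong j_{\et,*}(A|_U)$ (the paper cites \cite[Ch.~I, Proposition~2.7]{fc90} for this rather than invoking the valuative criterion directly), and then settle the rank-one case via Proposition~\ref{log reg}(2)(3) for injectivity and Lemma~\ref{inv func on interior} for surjectivity, exactly as you outline. One minor correction: stability of log regularity under strict \'etale maps is not Lemma~\ref{log reg stab} (which concerns Kummer covers), but it is a standard consequence of the \'etale-local nature of the definition.
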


\begin{proof}
First, we treat the case where $G=T$. By working \'{e}tale locally on $S$, we may assume that $T=\bG_{m}$ and that there is an fs chart $P\to \cM_{S}$. Let $S'\in S_{\et}$. We have natural maps
\[
\cM_{S}(S')\to \cO_{S'}(S')\to \cO_{S'}(U\times_{S} S')^{\times}
\]
By Proposition \ref{log reg}(2) and (3), both maps are injective. Hence, the map $\bG_{m,\mathrm{log}}|_{S_{\et}}\to j_{\et,*}(\bG_{m,U})$ in the assertion is injective. Further, Lemma \ref{inv func on interior} implies that the map $\bG_{m,\mathrm{log}}|_{S_{\et}}\to j_{\et,*}(\bG_{m,U})$ is surjective. This proves the assertion then $G=T$.

Next, consider a general split semi-abelian scheme $G$. We have the following commutative diagram of sheaves on $S_{\et}$ whose rows are exact:
    \[
    \begin{tikzcd}
        0 \ar[r] & T_{\mathrm{log}}|_{S_{\et}} \ar[r] \ar[d] & G_{\mathrm{log}}|_{S_{\et}} \ar[r] \ar[d] & A \ar[r] \ar[d] & 0 \\
        0 \ar[r] & j_{\et,*}(T|_{U}) \ar[r] & j_{\et,*}(G|_{U}) \ar[r] & j_{\et,*}(A|_{U}).
    \end{tikzcd}
    \]
    It follows from what we have proved in the previous paragraph that the left vertical map is an isomorphism. By \cite[Ch.I, Proposition 2.7]{fc90}, the right vertical map is also an isomorphism. Therefore, the snake lemma implies that the middle vertical map is also an isomorphism.
\end{proof}

\subsection{Degeneration theory}

In this subsection, we study a relation between semi-abelian schemes, $1$-motives, and log $1$-motives over complete regular local rings.

Let $S=\mathrm{Spec}(R)$ be the spectrum of a complete regular local ring $R$ with unique closed point $s$, and $D$ be a normal crossings divisor on $S$. Let $(S,\cM_{S})$ be the fs log scheme defined by $D$. Set $U\coloneqq S\backslash D$. Note that every torus over $S$ is split after finite \'{e}tale base change.

\begin{dfn}\label{val of section of torsor}
    Let $\cP$ be a $\bG_{m}$-torsor on $S$. Let $\mathrm{Div}(S,U)$ denote the group of Weil divisors of $S$ whose support is contained in $D$. Then taking valuations defined by generic points of irreducible components of $D$ gives a map $\nu\colon \cP(U)\to \mathrm{Div}(X,U)$. 
\end{dfn}

\begin{dfn}\label{categories concerning degeneration}
    We define the following categories. 
    \begin{itemize}
        \item Let $\mathrm{DEG}(S,U)$ be the category of semi-abelian schemes $A$ over $S$ such that $A\times_{S} U$ is an abelian scheme over $U$.
        \item Let $\mathrm{wDD}(S,U)$ be the category of triples $(Y,G,u\colon Y|_{U}\to G|_{U})$ consisting of a locally constant sheaf $Y$ of free $\bZ$-modules of finite rank on $S_{\et}$, a split semi-abelian scheme $G$ over $S$, and a group homomorphism $u\colon Y|_{U}\to G|_{U}$. In the same way as Lemma \ref{symm description of 1mot}, $\mathrm{wDD}(S,U)$ is naturally equivalent to the category of $1$-motives $\cQ_{U}=(X_{U},Y_{U},A_{U},c_{U},c^{\vee}_{U},\tau)$ over $U$ such that the tuple $(X_{U},Y_{U},A_{U},c_{U},c^{\vee}_{U})$ (uniquely) extends to $(X,Y,A,c,c^{\vee})$ over $S$. In particular, $\mathrm{wDD}(S,U)$ is a full subcategory of the category of $1$-motives over $U$.
        \item Let $\mathrm{DD}_{\mathrm{pol}}(S,U)$ be the category of an object $\cQ_{U}=(X_{U},Y_{U},A_{U},c_{U},c^{\vee}_{U},\tau)\in \mathrm{wDD}(S,U)$ equipped with a polarization $\lambda_{U}\colon \cQ_{U}\to \cQ_{U}^{\vee}$ satisfying the following conditions:
        \begin{itemize}
            \item $\lambda_{U}^{\mathrm{ab}}$ extends to a polarization on $A$;
            \item there is a connected finite \'{e}tale cover $S'\to S$ such that the pullback of $Y$ to $S'$ is constant and, for each $y\in Y(S')$, we have
            \[
            \nu(\tau(y,\lambda^{\et}(y)))\in \mathrm{Div}^{+}(S',U')\backslash \{0\},
            \]
            where $U'\coloneqq U\times_{S} S'$ and $\mathrm{Div}^{+}(S',U')$ is the submonoid of $\mathrm{Div}(S',U')$ consisting of effective divisors. Clearly, this condition is independent of the choice of $S'$. 
        \end{itemize}
        Forgetting polarizations gives a functor
        $\mathrm{DD}_{\mathrm{pol}}(S,U)\to \mathrm{wDD}(S,U)$. The essential image of this functor is denoted by $\mathrm{DD}(S,U)$.
        \item Let $\mathrm{wDD}^{\mathrm{log}}(S,U)$ be the category of log $1$-motives over $(S,\cM_{S})$.
        \item Let $\mathrm{DD}^{\mathrm{log}}_{\mathrm{pol}}(S,U)$ be the category of a log $1$-motive $\cQ_{\mathrm{log}}=(X,Y,A,c,c^{\vee},\tau^{\mathrm{log}})$ equipped with a morphism $\lambda\colon \cQ_{\mathrm{log}}\to \cQ_{\mathrm{log}}^{\vee}$ satisfying the following conditions:
        \begin{itemize}
            \item $\lambda^{\mathrm{ab}}$ is a polarization on $A$;
            \item for $y\in Y_{\overline{s}}\backslash \{0\}$, we have $\langle y,\lambda^{\et}(y)\rangle \in (\cM_{S,\overline{s}}/\cO_{S,\overline{s}}^{\times})\backslash \{1\}$, where $\bar{s}$ is a geometric point on $S$ above $s$, and $\langle -,- \rangle $ is the monodromy pairing $Y\times X\to \bG_{m,\mathrm{log}}/\bG_{m}$ (see Definition \ref{monodromy pairing ass with log 1mot}).
        \end{itemize}
        Forgetting $\lambda$ gives a functor
        $\mathrm{DD}^{\mathrm{log}}_{\mathrm{pol}}(S,U)\to \mathrm{wDD}^{\mathrm{log}}(S,U)$. The essential image of this functor is denoted by $\mathrm{DD}^{\mathrm{log}}(S,U)$.
    \end{itemize}
\end{dfn}

\begin{rem}
    The notion of polarizations on log $1$-motives is also defined in {\cite[Definition 2.8]{kkn08b}}. However, we do not use it in this paper. Note that, for an object $(\cQ_{\mathrm{log}},\lambda)$ of the category $\mathrm{DD}^{\mathrm{log}}_{\mathrm{pol}}(S,U)$, the morphism $\lambda$ is not a polarization in the sense of \emph{loc. cit.} unless $S=U$. 
\end{rem}

\begin{thm}[Mumford's degeneration theory, cf.~{\cites{fc90,lan13,mad19}}]\label{deg theory} There is a natural equivalence of categories
    \[
    \mathrm{DEG}(S,U)\simeq \mathrm{DD}(S,U).
    \]
\end{thm}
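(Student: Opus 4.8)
The plan is to deduce the equivalence from Mumford's degeneration theory of abelian schemes, as developed in \cite{fc90} and refined in \cite{lan13} and \cite{mad19}; as remarked after the statement, the equivalence is essentially already contained in \cite{fc90}, so the task is mostly to recast it in the present formulation. Write $S=\mathrm{Spec}(R)$ with $R$ a complete regular local ring, and let $I_{D}\subset R$ be the reduced ideal of $D$. Since a regular local ring is a UFD, $I_{D}=(f)$ is principal, with $f$ a product of pairwise non-associate prime elements; in particular $V(I_{D})=D$ and $U=\mathrm{Spec}(R[1/f])$. The first point I would record is that $R$ is automatically $I_{D}$-adically complete: since $R$ is $\mathfrak{m}$-adically complete and every ideal of a noetherian local ring is $\mathfrak{m}$-adically closed, any $I_{D}$-adic Cauchy sequence is $\mathfrak{m}$-adically Cauchy, hence converges, and its limit lies in the relevant power of $I_{D}$. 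Thus $(R,I_{D})$ is exactly of the type to which \cite{fc90} applies.

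Next I would match the two categories with those of loc.\ cit. On the left, $\mathrm{DEG}(S,U)$ — semi-abelian schemes over $S$ whose restriction to $U$ is abelian — is by construction the category of abelian schemes over $\mathrm{Spec}(R[1/f])$ with semi-abelian degeneration along $V(I_{D})$ studied by Faltings--Chai; here one uses that over the complete local base $S$ the abelian part of the Raynaud extension, being proper, algebraizes by formal GAGA, and that the \'etale sheaves $X$ and $Y$ of the degeneration data, defined a priori along $D$, extend uniquely to $S$ because $R$ and $R/I_{D}$ are Henselian with the same residue field (if one prefers, a preliminary finite \'etale descent reduces everything to the case where $X$ and $Y$ are constant, since both $\mathrm{DEG}$ and $\mathrm{DD}$ satisfy descent along finite \'etale covers and such a cover of $S$ is again the spectrum of a complete regular local ring). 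On the right, $\mathrm{DD}(S,U)$ is, by Definition \ref{categories concerning degeneration}, the full subcategory of $\mathrm{wDD}(S,U)$ of degeneration data admitting a polarization whose abelian part extends to a polarization of $A$ and for which $\nu(\tau(y,\lambda^{\et}(y)))$ is a nonzero effective divisor supported on $D$ for every nonzero $y$; I would verify that, after unwinding the definitions of $\nu$ and of the trivialization $\tau$, this is equivalent to the positive-definiteness/ampleness condition imposed on degeneration data in loc.\ cit. The only nontrivial containment is that every object of the classical $\mathrm{DD}$ carries such a compatible polarization: this uses that abelian schemes over $S=\mathrm{Spec}(R)$ are polarizable (over a complete local base, symmetric line bundle classes on an abelian scheme deform without obstruction, so an ample symmetric bundle on the closed fibre lifts and algebraizes), together with the fact that $I_{D}$-positivity of $\tau$ already supplies a positive-definite form on $Y$ from which a compatible polarization is built.

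With these identifications, the asserted equivalence $\mathrm{DEG}(S,U)\simeq\mathrm{DD}(S,U)$ is Mumford's degeneration theorem: the functor from left to right sends a semi-abelian scheme to its degeneration data (Raynaud extension, lattice, period homomorphism, biextension trivialization), and a quasi-inverse is furnished by Mumford's quotient construction; that these functors are well defined and mutually quasi-inverse is the content of \cite{fc90} and \cite{lan13}, and naturality is clear from the constructions. One should also note that no constancy of the toric rank is being assumed — the general form of the theory in \cite{lan13} (and \cite{mad19} in the mixed-characteristic case) permits the toric rank to jump along $D$, which is what is needed here.

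The step I expect to be the main obstacle is the bookkeeping of the previous two paragraphs: translating our positivity condition — phrased concretely via the valuations of $\tau$ along the irreducible components of $D$ — into the positivity condition of the classical references, phrased via the ideal $I_{D}$, and checking that our hypotheses (semi-abelian over all of $S$, abelian over the \emph{full} interior $U=S\setminus D$) cut out exactly the category $\mathrm{DEG}$ of loc.\ cit.\ and nothing more. There is no new geometric input beyond Mumford's theory.
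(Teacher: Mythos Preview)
Your proposal is correct and aligns with the paper's approach: the paper's own proof is a one-line citation to \cite[(1.2.2)]{mad19}, and your sketch is precisely an unpacking of that citation---verifying that $R$ is $I_D$-adically complete, matching the paper's $\mathrm{DEG}(S,U)$ and $\mathrm{DD}(S,U)$ with the categories in Faltings--Chai/Lan/Madapusi, and translating the positivity condition on $\nu(\tau(y,\lambda^{\et}(y)))$ into the classical $I_D$-adic positivity. There is nothing to correct; you have simply written out the bookkeeping that the paper leaves to the reference.
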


\begin{proof}
    See \cite[(1.2.2)]{mad19}. 
\end{proof}

By reinterpreting the right hand side in the equivalence in Theorem \ref{deg theory} in terms of log $1$-motives, we obtain the following result.

\begin{thm}\label{log deg theory} 
There are natural equivalences of categories
    \[
    \mathrm{DEG}(S,U)\simeq \mathrm{DD}(S,U)\simeq \mathrm{DD}^{\mathrm{log}}(S,U).
    \]
\end{thm}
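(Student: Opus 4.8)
The first of the two equivalences is Mumford's degeneration theory (Theorem~\ref{deg theory}), so the task is to produce the second one, $\mathrm{DD}(S,U)\simeq\mathrm{DD}^{\mathrm{log}}(S,U)$. My plan is: (a) construct an equivalence $F\colon\mathrm{wDD}^{\mathrm{log}}(S,U)\isom\mathrm{wDD}(S,U)$; (b) check that $F$ is compatible with the formation of duals; (c) check that $F$ carries the full subcategory $\mathrm{DD}^{\mathrm{log}}(S,U)$ onto $\mathrm{DD}(S,U)$. For (a) I would let $F$ be restriction to the interior $U$: a log $1$-motive $\cQ_{\mathrm{log}}=(Y\stackrel{u}{\to}G_{\mathrm{log}})$ over $(S,\cM_{S})$ has ``classical part'' $(X,Y,A,c,c^{\vee})$ already defined over $S$, and since $\cM_{U}$ is trivial we have $G_{\mathrm{log}}|_{U}=G|_{U}$, so $u$ restricts to $u|_{U}\colon Y|_{U}\to G|_{U}$; the resulting $1$-motive over $U$ lies in $\mathrm{wDD}(S,U)$, and this defines $F$.

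For $F$ to be an equivalence the key input is Lemma~\ref{description of Glog}: it gives $G_{\mathrm{log}}|_{S_{\et}}\isom j_{\et,*}(G|_{U})$, hence, since $Y$ is pulled back from $S_{\et}$,
\[
\Hom_{(S,\cM_{S})_{\mathrm{kfl}}}(Y,G_{\mathrm{log}})=\Hom_{S_{\et}}(Y,j_{\et,*}(G|_{U}))=\Hom_{U_{\et}}(Y|_{U},G|_{U}),
\]
so every $u|_{U}$ extends uniquely to $u\colon Y\to G_{\mathrm{log}}$, giving a quasi-inverse to $F$. For morphisms I would pass to the symmetric descriptions of Lemmas~\ref{symm description of 1mot} and \ref{symm desc of log 1mot}: the only datum not literally defined over $S$ is the biextension trivialization, and a $\bG_{m}$-biextension trivialization of $(c\times c^{\vee})^{*}\cP_{A}$ over $U$ corresponds bijectively to a $\bG_{m,\mathrm{log}}$-biextension trivialization of $(c\times c^{\vee})^{*}\cP_{A}^{\mathrm{log}}$ over $(S,\cM_{S})$, because \'etale-locally on $S$ these are bilinear systems of $\bG_{m}$-torsors on $S$ (trivial, since $S$ is local) and $\Gamma(U,\bG_{m})=\Gamma((S,\cM_{S}),\bG_{m,\mathrm{log}})$ by Lemma~\ref{description of Glog}. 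That a morphism of $1$-motives over $U$ between objects of $\mathrm{wDD}(S,U)$ automatically has lattice and abelian parts defined over $S$ follows from $\pi_{1}(U)\twoheadrightarrow\pi_{1}(S)$ for the locally constant sheaves and from the extension of morphisms between abelian schemes over the normal base $S$ for the abelian parts. Compatibility of $F$ with the operation $\cQ\mapsto\cQ^{\vee}$ is then formal, since restriction to $U$ commutes with the switching and inversion maps defining the dual.

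For (c), under $F$ a morphism $\lambda_{U}\colon\cQ_{U}\to\cQ_{U}^{\vee}$ over $U$ corresponds to a morphism $\lambda\colon\cQ_{\mathrm{log}}\to\cQ_{\mathrm{log}}^{\vee}$ with the same lattice and abelian parts over $S$; in particular ``$\lambda_{U}^{\mathrm{ab}}$ extends to a polarization on $A$'' is literally ``$\lambda^{\mathrm{ab}}$ is a polarization on $A$''. So the content is to match the effectivity condition $\nu(\tau(y,\lambda^{\et}(y)))\in\mathrm{Div}^{+}(S',U')\backslash\{0\}$ defining $\mathrm{DD}_{\mathrm{pol}}(S,U)$ with the monodromy condition $\langle y,\lambda^{\et}(y)\rangle\in(\cM_{S,\overline{s}}/\cO_{S,\overline{s}}^{\times})\backslash\{1\}$ defining $\mathrm{DD}^{\mathrm{log}}_{\mathrm{pol}}(S,U)$. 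Passing to a connected finite \'etale cover $S'=\Spec(R')\to S$ trivializing $Y$ (harmless, both conditions being insensitive to it), $R'$ is again a complete regular local ring, so its divisor class group vanishes, $S'$ has a unique closed point, and $\cO_{S',\eta}$ is a discrete valuation ring for every generic point $\eta$ of $D'$ by Proposition~\ref{log reg}(2); together with Lemma~\ref{description of Glog} this identifies $\Gamma((S',\cM_{S'}),\bG_{m,\mathrm{log}}/\bG_{m})$ with $\mathrm{Div}(S',U')$ so that the map induced by $\Gamma((S',\cM_{S'}),\bG_{m,\mathrm{log}})=\Gamma(U',\bG_{m})$ becomes $\nu$. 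Unwinding Definition~\ref{monodromy pairing ass with log 1mot} — $\langle y,x\rangle$ is the image of $u(y)$ under $G_{\mathrm{log}}\twoheadrightarrow G_{\mathrm{log}}/G\cong T_{\mathrm{log}}/T$, evaluated at $x$ — against the construction of $\tau$ from $u$ (pushout of $0\to T_{\mathrm{log}}\to G_{\mathrm{log}}\to A\to 0$ along $x$, then pullback along $c$) yields $\langle y,x\rangle=\nu(\tau(y,x))$ under this identification. Since $S'$ is local, every component of $D'$ meets the closed point, so an element of $\mathrm{Div}(S',U')$ is effective and nonzero exactly when its class in $\cM_{S',\overline{s}}^{\mathrm{gp}}/\cO_{S',\overline{s}}^{\times}$ lies in $(\cM_{S',\overline{s}}/\cO_{S',\overline{s}}^{\times})\backslash\{1\}$; this matches the two conditions, so $F$ restricts to the desired equivalence $\mathrm{DD}^{\mathrm{log}}(S,U)\simeq\mathrm{DD}(S,U)$.

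The main obstacle is the identification $\langle y,x\rangle=\nu(\tau(y,x))$ in step (c): it requires tracing carefully through both the definition of the monodromy pairing and the symmetric construction of the trivialization $\tau$ attached to a $1$-motive, and then reconciling the two superficially different positivity conditions through the local structure of log regular schemes. A secondary, more routine point is the bookkeeping behind the several ``$\Hom$ over $S$ equals $\Hom$ over $U$'' assertions in step (a), where the normality of the base $S$ enters.
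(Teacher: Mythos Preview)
Your proposal is correct and follows essentially the same approach as the paper: both use Lemma~\ref{description of Glog} to identify $\Hom(Y,G_{\mathrm{log}})$ with $\Hom(Y|_{U},G|_{U})$ and hence obtain $\mathrm{wDD}(S,U)\simeq\mathrm{wDD}^{\mathrm{log}}(S,U)$, and then match the polarization conditions via the identification of $\mathrm{Div}^{+}(S',U')$ with $\cM_{S,\overline{s}}/\cO_{S,\overline{s}}^{\times}$ after a suitable finite \'etale cover. The paper's proof is considerably terser---it does not spell out the unwinding $\langle y,x\rangle=\nu(\tau(y,x))$ or the compatibility with duals---so your step~(c) in fact fills in details the paper leaves implicit.
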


\begin{proof}
It is enough to prove the second equivalence. For a locally constant sheaf $Y$ of free $\bZ$-modules of finite rank on $S_{\et}$ and a split semi-abelian scheme $G$ on $S$, giving a group homomorphism $Y|_{U}\to G|_{U}$ is equivalent to giving a homomorphism $Y\to G_{\mathrm{log}}$ by Lemma \ref{description of Glog}. Hence, there is a natural equivalence of categories
    \[
    \mathrm{wDD}(S,U)\simeq \mathrm{wDD}^{\mathrm{log}}(S,U).
    \]
    If we take a connected finite \'{e}tale cover $S'\to S$ such that every irreducible component of $S'-U'$ is regular, we have an isomorphism of monoids $\mathrm{Div}^{+}(S',U')\cong \cM_{S,\overline{s}}/\cO_{S,\overline{s}}^{\times}$, where we put $U'\coloneqq U\times_{S} S'$. Therefore, the above equivalence induces an equivalence
    \[
    \mathrm{DD}_{\mathrm{pol}}(S,U)\simeq \mathrm{DD}^{\mathrm{log}}_{\mathrm{pol}}(S,U),
    \]
    and so we obtain an equivalence
    \[
    \mathrm{DD}(S,U)\simeq \mathrm{DD}^{\mathrm{log}}(S,U).
    \]
\end{proof}

\begin{prop}\label{compatibility for torsion point}
Let $A\in \mathrm{DEG}(S,U)$. Let $\cQ_{U}$ (resp. $\cQ_{\mathrm{log}}$) be the object of $\mathrm{DD}(S,U)$ (resp. $\mathrm{DD}^{\mathrm{log}}(S,U)$) corresponding to $A$ via the equivalences in Theorem \ref{log deg theory}. Then there are natural isomorphisms of finite flat group schemes over $U$
    \[
    A|_{U}[n]\cong \cQ_{U}[n]\cong \cQ_{\mathrm{log}}[n]|_{U}
    \]
    for any integer $n\geq 1$.
\end{prop}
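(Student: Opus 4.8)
The plan is to establish the two isomorphisms separately. The isomorphism $\cQ_{U}[n]\cong \cQ_{\mathrm{log}}[n]|_{U}$ will be immediate from the way the equivalence of Theorem \ref{log deg theory} was built, whereas $A|_{U}[n]\cong \cQ_{U}[n]$ amounts to extracting the torsion comparison from Mumford's degeneration theory (Theorem \ref{deg theory}), and this is where essentially all of the content lies.

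For $\cQ_{U}[n]\cong \cQ_{\mathrm{log}}[n]|_{U}$: the equivalence $\mathrm{DD}(S,U)\simeq \mathrm{DD}^{\mathrm{log}}(S,U)$ of Theorem \ref{log deg theory} is induced by $\mathrm{wDD}(S,U)\simeq \mathrm{wDD}^{\mathrm{log}}(S,U)$, which sends a triple $(Y,G,u\colon Y|_{U}\to G|_{U})$ to the log $1$-motive $(Y\xrightarrow{\widetilde{u}} G_{\mathrm{log}})$, where $\widetilde{u}$ is the unique extension of $u$ furnished by Lemma \ref{description of Glog}. Since $U$ is the interior of $(S,\cM_{S})$, the log structure $\cM_{U}$ is trivial; as any Kummer morphism over a log scheme with trivial log structure again has trivial log structure, the kfl topos of $(U,\cM_{U})$ coincides with the fppf topos of $U$, and $\bG_{m,\mathrm{log}}|_{U}=\bG_{m}|_{U}$. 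Hence $T_{\mathrm{log}}|_{U}=T|_{U}$, and restricting the pushout square defining $G_{\mathrm{log}}$ gives $G_{\mathrm{log}}|_{U}\cong G|_{U}$ with $\widetilde{u}|_{U}=u$. Therefore the restriction along the morphism of sites $(U,\cM_{U})_{\mathrm{kfl}}\to (S,\cM_{S})_{\mathrm{kfl}}$ of the two-term complex $[Y\xrightarrow{\widetilde{u}} G_{\mathrm{log}}]$ representing $\cQ_{\mathrm{log}}$ is the complex $[Y_{U}\xrightarrow{u} G_{U}]$ of fppf sheaves on $U$ representing $\cQ_{U}$; as this restriction functor is exact and compatible with $\otimes^{\bL}_{\bZ}$, it commutes with $H^{-1}(-\otimes^{\bL}_{\bZ}\bZ/n)$, yielding the isomorphism. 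It is an isomorphism of finite flat group schemes over $U$, since $\cQ_{\mathrm{log}}[n]$ is a log finite group scheme by Proposition \ref{log fin ass to log 1mot} and a log finite group scheme restricts over the interior to a classical finite flat group scheme.

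For $A|_{U}[n]\cong \cQ_{U}[n]$: I would unwind Theorem \ref{deg theory}. Under its equivalence, $A$ is recovered from $\cQ_{U}\in \mathrm{DD}(S,U)$ by Mumford's construction, which, after a strict \'{e}tale localization on $S$ splitting $T$ and trivializing $X$ and $Y$, produces $A$ over $S$ together with a canonical identification of its formal completion $\widehat{A}$ along the ideal $I$ defining $D$ with $\widehat{G}/\iota(Y)$, where $\iota\colon Y\to G|_{U}$ is the period homomorphism and the quotient is $I$-adic (see \cite[Ch.~III]{fc90}, \cite[Ch.~4]{lan13}, \cite{mad19}). Taking $n$-torsion of this identification, and using that $\widehat{G}[n]$ is finite flat, matches $\widehat{A}[n]$ with the finite flat group scheme that sits in the extension $0\to G[n]\to \cQ_{U}[n]\to Y/nY\to 0$ of Proposition \ref{log fin ass to log 1mot}, over the formal neighborhood of $D$. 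Promoting this to an isomorphism over the open $U$ is the genuinely delicate point; I would either (i) descend along the faithfully flat morphism $\Spec(\widehat{R})\to \Spec(R)$, with $\widehat{R}$ the $I$-adic completion of $R$ (faithful flatness because $I\subseteq \mathfrak{m}$), the cocycle condition being controlled by the canonicity of Mumford's construction, or (ii) appeal directly to the fact that this comparison is already part of the degeneration package, recorded as the degeneration data of $A[n]$ in \cite[Ch.~4]{lan13}, the one-dimensional case being classical.

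The hard part is the isomorphism $A|_{U}[n]\cong \cQ_{U}[n]$, specifically the passage from the formal statement near $D$ to one over the open $U$: once $A$ genuinely degenerates there is no morphism $G|_{U}\to A|_{U}$ of schemes (equivalently, of fppf sheaves on $U$), so the comparison cannot be seen directly over $U$ and must be pulled back from the formal uniformization of Theorem \ref{deg theory} by a descent argument, or carefully transcribed from the torsion degeneration data of \cite{fc90,lan13}. Once the isomorphism has been pinned down, its naturality in $A$ (equivalently, in $\cQ_{U}$) is routine, every construction involved being functorial.
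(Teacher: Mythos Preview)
Your approach is the paper's. The paper's proof is two sentences: it cites \cite[Ch.~III, Corollary~7.3]{fc90} and \cite[(1.2.2.1)]{mad19} for $A|_{U}[n]\cong \cQ_{U}[n]$ (your option~(ii)), and then observes that $\cQ_{\mathrm{log}}|_{U}=\cQ_{U}$ ``by construction'', which is exactly your first paragraph in compressed form.

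One caution about your option~(i): in this section $R$ is already a complete regular local ring, hence $I$-adically complete for the ideal $I$ of $D$ (any Noetherian $\mathfrak{m}$-adically complete local ring is $I$-adically complete for every $I\subseteq\mathfrak{m}$), so $\Spec(\widehat{R})\to\Spec(R)$ is the identity and there is nothing to descend. More substantively, Mumford's uniformization $\widehat{A}\cong\widehat{G}/\iota(Y)$ lives over the formal scheme $\Spf(R,I)$, whose underlying topological space is $D$, not $U$; passing from this formal identification to an isomorphism of finite flat group schemes over the disjoint open $U$ is precisely the content of \cite[Ch.~III, \S7]{fc90} (via the explicit relatively complete model), and cannot be extracted by a descent trick alone. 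So option~(ii) is not a shortcut but the substance, and it is what the paper invokes.
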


\begin{proof}
    We have a natural isomorphism $A|_{U}[n]\cong \cQ_{U}[n]$ (see \cite[Ch.III, Corollary 7.3]{fc90} or \cite[(1.2.2.1)]{mad19}). Since the restriction of $\cQ_{\mathrm{log}}$ to $U$ coincides with $\cQ_{U}$ by construction, we have natural isomorphisms
    \[
    \cQ_{U}[n]\cong (\cQ_{\mathrm{log}})|_{U}[n]\cong \cQ_{\mathrm{log}}[n]|_{U}.
    \]
\end{proof}

\section{The proof of the main theorems}\label{sec4}

\begin{lem}\label{fully faithfulness dvr case}
Let $R$ be a discrete valuation ring with fraction field $K$, and $\cM_{R}$ be the log structure on $\mathrm{Spec}(R)$ defined by the unique closed point. 
\begin{enumerate}
    \item Let $n\geq 1$ be an integer invertible in $R$. Let $G_{1}$ and $G_{2}$ be weak log finite group schemes over $(\mathrm{Spec}(R),\cM_{R})$ killed by $n$. Then the restriction map
    \[
    \mathrm{Hom}(G_{1},G_{2})\to \mathrm{Hom}(G_{1}|_{K},G_{2}|_{K})
    \]
    is an isomorphism.
    \item Let $p$ be a prime number. Let $G_{1}$ and $G_{2}$ be log $p$-divisible groups over $(\mathrm{Spec}(R),\cM_{R})$. Then the restriction map
    \[
    \mathrm{Hom}(G_{1},G_{2})\to \mathrm{Hom}(G_{1}|_{K},G_{2}|_{K})
    \]
    is an isomorphism.
\end{enumerate}
\end{lem}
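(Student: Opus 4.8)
The plan is to treat the two parts separately, reducing part~(1) to the surjectivity of a homomorphism of Kummer \'etale fundamental groups, and part~(2) --- via Beauville--Laszlo gluing --- to the case of a complete discrete valuation ring.

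\textbf{Part (1).} Since $n$ is invertible in $R$, Lemma~\ref{log fin grp sch and lcf sheaf} identifies $\mathrm{wFin}((\Spec(R),\cM_{R}),\bZ/n)$ with the category of locally constant sheaves of finite $\bZ/n$-modules on $(\Spec(R),\cM_{R})_{\mathrm{k\et}}$, and --- since $\Spec(K)$ is the interior of $(\Spec(R),\cM_{R})$ and hence carries the trivial log structure --- it identifies $\mathrm{wFin}(\Spec(K),\bZ/n)$ with locally constant $\bZ/n$-sheaves on $\Spec(K)_{\et}$. Under these identifications the restriction map in the statement is computed by the pullback functor along $\Spec(K)\hookrightarrow(\Spec(R),\cM_{R})$, so it is an isomorphism on $\Hom$-sets as soon as this pullback functor is fully faithful, i.e.\ as soon as the induced homomorphism $\pi_{1}(\Spec(K))\to\pi_{1}^{\mathrm{k\et}}(\Spec(R),\cM_{R})$ is surjective. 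Surjectivity is equivalent to the statement that every connected finite Kummer \'etale cover $(\Spec(R'),\cM_{R'})$ of $(\Spec(R),\cM_{R})$ has connected generic fibre. But such a cover is again log regular (by Lemma~\ref{log reg stab}, applied after a strict \'etale localization), so $R'$ is normal by Proposition~\ref{log reg}(2); being also connected and finite over $R$, it is a normal domain, whence $R'\otimes_{R}K$ --- a localization of $R'$ that is finite over $K$ --- is a field, so its spectrum is connected. (Equivalently, $\pi_{1}^{\mathrm{k\et}}(\Spec(R),\cM_{R})$ is the tame quotient of $\pi_{1}(\Spec(K))$ relative to the closed point, which is by construction a quotient.)

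\textbf{Part (2), injectivity.} A homomorphism $f\colon G_{1}\to G_{2}$ with $f|_{K}=0$ restricts, for each $n$, to $f_{n}\colon G_{1}[p^{n}]\to G_{2}[p^{n}]$ with $f_{n}|_{K}=0$, and it suffices to see each $f_{n}$ vanishes. Pulling back along a suitable Kummer covering of $(\Spec(R),\cM_{R})$ by a discrete valuation ring $R'$ makes both $G_{1}[p^{n}]$ and $G_{2}[p^{n}]$ classical (apply Lemma~\ref{kfl vect bdle is classical after n-power ext} to their coordinate rings, using Lemma~\ref{str sheaf of log fin grp sch}); since a homomorphism of finite flat group schemes over a discrete valuation ring is determined by its generic fibre, the pullback of $f_{n}$ to $\Spec(R')$ vanishes, and then $f_{n}=0$ because the kfl topology is subcanonical.

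\textbf{Part (2), bijectivity.} Writing $\Hom(G_{1},G_{2})=\varprojlim_{n}\Hom(G_{1}[p^{n}],G_{2}[p^{n}])$ (and likewise over $K$), Beauville--Laszlo gluing for log finite group schemes (Proposition~\ref{bl gluing for log fin grp}) applied at each level $n$ and passed to the limit identifies $\Hom_{(\Spec(R),\cM_{R})}(G_{1},G_{2})$ with the fibre product of $\Hom_{K}(G_{1}|_{K},G_{2}|_{K})$ and $\Hom_{(\Spec(\widehat{R}),\cM_{\widehat{R}})}(G_{1}|_{\widehat{R}},G_{2}|_{\widehat{R}})$ over $\Hom_{\widehat{K}}(G_{1}|_{\widehat{K}},G_{2}|_{\widehat{K}})$, compatibly with restriction to $K$; hence the restriction map over $R$ is bijective as soon as the one over $\widehat{R}$ is, and we may assume $R$ is complete. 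If $\cM_{R}$ is trivial, log $p$-divisible groups over $(\Spec(R),\cM_{R})$ are ordinary $p$-divisible groups over $R$ and the assertion is the theorem of Tate. The remaining case, where $\cM_{R}$ is defined by the closed point, is the main obstacle: it is a logarithmic analogue of Tate's theorem, asserting that a log $p$-divisible group over $(\Spec(R),\cM_{R})$, together with its homomorphisms, is recovered from its restriction to $K$. I expect to prove this by combining the theorem of Tate just invoked with the structure theory of log $p$-divisible groups over a log regular trait (the log Dieudonn\'e theory of \cite{kat23} and \cite{ino25}): the ``classical part'' reduces to Tate's theorem, while the monodromy contribution coming from $\bG_{m,\mathrm{log}}$ is governed by the Kummer \'etale fundamental group exactly as in part~(1).
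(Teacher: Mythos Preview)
Your treatment of part~(1) matches the paper's exactly: both invoke Lemma~\ref{log fin grp sch and lcf sheaf} and the surjectivity of $\mathrm{Gal}(\overline{K}/K)\to\pi_{1,\mathrm{k\et}}(\Spec(R),\cM_{R})$, and your added justification of that surjectivity (via log regularity and normality of connected Kummer \'etale covers) is correct and welcome.

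Part~(2), however, has a genuine gap. Your reduction to the complete case via Proposition~\ref{bl gluing for log fin grp} is fine, and your separate injectivity argument is correct (if ultimately redundant). But you do not actually prove the complete case: you write that it ``is the main obstacle'' and that you ``expect to prove this'' via log Dieudonn\'e theory, with no further argument. That is a plan, not a proof. The sketch you offer---splitting into a classical part handled by Tate and a monodromy part handled as in part~(1)---is too vague to assess, and in any event the structure theory you allude to does not obviously give a clean two-step filtration on an arbitrary log $p$-divisible group over a complete DVR in a way that reduces full faithfulness to those two pieces. (Also, your case ``$\cM_{R}$ is trivial'' never occurs: the lemma hypothesizes that $\cM_{R}$ is the log structure defined by the closed point.)

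The paper does not attempt an independent proof here. It simply observes that part~(2) is precisely the log analogue of the Tate--de~Jong theorem established in \cite[Theorem~5.19]{bwz24}, noting that the full-faithfulness portion is contained in \cite[Lemma~4.8]{bwz24} and that the argument there goes through without the mixed-characteristic or perfect-residue-field hypotheses. So the correct fix is either to cite that result, or---if you want a self-contained argument---to actually carry out the proof of the complete case rather than gesturing at it.
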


\begin{proof}
    (1) follows from Lemma \ref{log fin grp sch and lcf sheaf} and the surjectivity of 
\[
\mathrm{Gal}(\overline{K}/K)\to \pi_{1,\mathrm{k\et}}(\mathrm{Spec}(R),\cM_{R}).
\]
(2) is nothing but the log version of the theorem of Tate and de Jong (\cite[Theorem 5.19]{bwz24}). Note that, although \cite[Theorem 5.19]{bwz24} assumes that $K$ is of mixed characteristic $(0,p)$, and that the residue field of $R$ is perfect, the fully faithfulness part is essentially proved in \cite[Lemma 4.8]{bwz24}, in which the argument works without such an additional assumption.
\end{proof}

\begin{prop}\label{mainthm dvr case}
    Let notations be as in Lemma \ref{fully faithfulness dvr case}. Let $A$ be a semi-abelian scheme over $R$ with $A_{K}\coloneqq A\otimes_{R} K$ being an abelian variety over $K$.  
    \begin{enumerate}
        \item Let $n\geq 1$ be an integer invertible in $R$. Then the finite group scheme $A_{K}[n]$ over $K$ uniquely extends to a log finite group scheme over $(\mathrm{Spec}(R),\cM_{R})$.
        \item Let $p$ be a prime number. Then the $p$-divisible group $A_{K}[p^{\infty}]$ over $K$ uniquely extends to a log $p$-divisible group over $(\mathrm{Spec}(R),\cM_{R})$.
    \end{enumerate}
\end{prop}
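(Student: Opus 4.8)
\emph{Plan.} The idea is to glue, along the completion of $R$, the obvious extension of $A_K[n]$ on the generic fibre with the extension produced by degeneration theory on the completed local ring; uniqueness will be formal from the full faithfulness results of Lemma~\ref{fully faithfulness dvr case}.

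\emph{Uniqueness.} If $G_1,G_2$ are log finite group schemes over $(\Spec R,\cM_R)$ each restricting to $A_K[n]$ on the interior $\Spec K$, then by Lemma~\ref{fully faithfulness dvr case}(1), applied to the full subcategory of log finite group schemes inside weak log finite group schemes killed by $n$, the restriction functor induces bijections on Hom-sets; hence the chosen isomorphism $G_1|_{\Spec K}\cong G_2|_{\Spec K}$ lifts uniquely to an isomorphism $G_1\cong G_2$. Part (2) is identical, using Lemma~\ref{fully faithfulness dvr case}(2) in place of (1).

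\emph{Existence.} Let $\widehat R$ be the completion of $R$, with fraction field $\widehat K$, and let $\cM_{\widehat R}$ be the pullback of $\cM_R$, i.e.\ the log structure on $\Spec\widehat R$ defined by its closed point. Then $\widehat R$ is a complete regular local ring of dimension $1$, its closed point is a normal crossings divisor, and the interior of $(\Spec\widehat R,\cM_{\widehat R})$ is $\Spec\widehat K$. The base change $A_{\widehat R}\coloneqq A\otimes_R\widehat R$ is a semi-abelian scheme over $\widehat R$ whose generic fibre is $A_K\otimes_K\widehat K$, an abelian variety, so $A_{\widehat R}\in\mathrm{DEG}(\Spec\widehat R,\Spec\widehat K)$. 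By Theorem~\ref{log deg theory} it corresponds to a log $1$-motive $\cQ_{\mathrm{log}}$ over $(\Spec\widehat R,\cM_{\widehat R})$; by Proposition~\ref{log fin ass to log 1mot}(2) the sheaf $\cQ_{\mathrm{log}}[n]$ is a log finite group scheme over $(\Spec\widehat R,\cM_{\widehat R})$, and $\cQ_{\mathrm{log}}[p^\infty]$ is a log $p$-divisible group by Proposition~\ref{log fin ass to log 1mot}(4). Proposition~\ref{compatibility for torsion point} gives a natural isomorphism $\cQ_{\mathrm{log}}[n]|_{\widehat K}\cong A_{\widehat R}[n]|_{\widehat K}$, which I combine with the tautological identification $A_{\widehat R}[n]|_{\widehat K}=A_K[n]\otimes_K\widehat K$ to obtain a gluing datum over $\widehat K$ between $A_K[n]\in\mathrm{Fin}(K)$ and $\cQ_{\mathrm{log}}[n]\in\mathrm{Fin}(\Spec\widehat R,\cM_{\widehat R})$. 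Beauville--Laszlo gluing for log finite group schemes (Proposition~\ref{bl gluing for log fin grp}) then produces a log finite group scheme over $(\Spec R,\cM_R)$ whose restriction to $\Spec K$ is $A_K[n]$, proving (1). For (2) I would run the same construction at every level $p^m$: since all transition maps and multiplication-by-$p$ maps are compatible with the gluing data, Proposition~\ref{bl gluing for log fin grp} yields a directed system $\{H_m\}_m$ of log finite group schemes over $(\Spec R,\cM_R)$, and one checks it is a log $p$-divisible group by testing the defining surjectivity, exactness, and union conditions after pulling back to $\Spec K$ and to $(\Spec\widehat R,\cM_{\widehat R})$, where they hold by construction (Proposition~\ref{log fin ass to log 1mot}), using that the Beauville--Laszlo functor is an equivalence.

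\emph{Expected main obstacle.} Given the cited inputs, the argument is essentially formal, so the delicate point is bookkeeping rather than mathematics: one must check that the isomorphism $\cQ_{\mathrm{log}}[n]|_{\widehat K}\cong A_{\widehat R}[n]|_{\widehat K}$ of Proposition~\ref{compatibility for torsion point} is compatible with base change from $K$, so that the glued object genuinely restricts to $A_K[n]$ on $\Spec K$ and not to a twist; and in (2) one must verify that the glued system indeed satisfies the axioms of a log $p$-divisible group. Both reduce to the already established complete case once the identifications over $\widehat K$ are pinned down carefully.
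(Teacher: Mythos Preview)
Your proposal is correct and follows essentially the same approach as the paper: uniqueness from Lemma~\ref{fully faithfulness dvr case}, existence by Beauville--Laszlo gluing (Proposition~\ref{bl gluing for log fin grp}) to reduce to the complete case, and then Theorem~\ref{log deg theory} together with Proposition~\ref{log fin ass to log 1mot} and Proposition~\ref{compatibility for torsion point} to produce the extension over $\widehat R$. The paper's proof is simply terser---it phrases the use of Proposition~\ref{bl gluing for log fin grp} as ``we may assume $R$ is complete'' rather than spelling out the gluing datum, and for part~(2) it takes $\cQ_{\mathrm{log}}[p^\infty]$ directly as the answer in the complete case rather than gluing level by level---but the underlying argument is the same, and the bookkeeping you flag as the main obstacle is exactly what is absorbed into the equivalence statement of Proposition~\ref{bl gluing for log fin grp}.
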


\begin{proof}
For both assertions, the uniqueness follows from Lemma \ref{fully faithfulness dvr case}. By Proposition \ref{bl gluing for log fin grp}, we may assume that $R$ is complete. Let $\cQ_{\mathrm{log}}$ be the log $1$-motive on $(\mathrm{Spec}(R),\cM_{R})$ corresponding to $A\in \mathrm{DEG}(R,K)$ via the equivalence in Theorem \ref{log deg theory}. Then the log finite group scheme $\cQ_{\mathrm{log}}[n]$ and the log $p$-divisible group $\cQ_{\mathrm{log}}[p^{\infty}]$ are the desired extensions.
\end{proof}

\begin{lem}\label{lem on purity for morphisms of kfl vect bdles}
    Let $f\colon X\to Y$ be a flat morphism from a (not necessarily locally noetherian) scheme $X$ to a locally noetherian normal scheme $Y$. Let $U$ be a dense open subset of $Y$ containing all points of codimension $1$. Then the restriction functor
    \[
    \mathrm{Vect}(X)\to \mathrm{Vect}(f^{-1}(U))
    \]
    is fully faithful.
\end{lem}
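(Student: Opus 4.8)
The plan is to reduce the statement to a single vanishing assertion on the base $Y$ and then transport it to $X$ by flat base change. First, for vector bundles $\cE$ and $\cF$ on $X$ one has $\Hom_{X}(\cE,\cF)=\Gamma(X,\sHom_{\cO_{X}}(\cE,\cF))$, and since $\sHom$ is compatible with restriction to the open subscheme $f^{-1}(U)$ and $\sHom_{\cO_{X}}(\cE,\cF)$ is again a vector bundle, the full faithfulness of the restriction functor is equivalent to the following: for every vector bundle $\cG$ on $X$, the canonical map $\Gamma(X,\cG)\to\Gamma(f^{-1}(U),\cG|_{f^{-1}(U)})$ is bijective. In turn, it suffices to prove that $\cG\to j_{*}j^{*}\cG$ is an isomorphism of sheaves on $X$, where $j\colon f^{-1}(U)\hookrightarrow X$ denotes the open immersion.

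The input on $Y$ is algebraic Hartogs. Since $U$ contains every point of codimension $\leq 1$ (in particular every generic point), the closed complement $Z\coloneqq Y\setminus U$ has codimension $\geq 2$ in the locally noetherian normal scheme $Y$; hence, writing $j_{Y}\colon U\hookrightarrow Y$ for the inclusion, one has $\cO_{Y}\isom (j_{Y})_{*}\cO_{U}$ and $R^{1}(j_{Y})_{*}\cO_{U}=0$. Locally this is the statement that a noetherian normal ring $A$ satisfies $\mathrm{depth}_{I}(A)\geq 2$ for every ideal $I$ of height $\geq 2$, which follows from Serre's condition $(S_{2})$.

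To transport this to $X$, observe that the square formed by $j$, $j_{Y}$, the structure morphism $f$, and the restriction $f_{U}\colon f^{-1}(U)\to U$ is cartesian, that $j_{Y}$ is quasi-compact and quasi-separated because $Y$ is locally noetherian, and that $f$ is flat. Flat base change then gives $f^{*}R(j_{Y})_{*}\cF\isom Rj_{*}f_{U}^{*}\cF$ for quasi-coherent $\cF$ on $U$; taking $\cF=\cO_{U}$ and passing to $H^{0}$ yields $\cO_{X}=f^{*}\cO_{Y}\isom j_{*}\cO_{f^{-1}(U)}$. Finally, the assertion $\cG\isom j_{*}j^{*}\cG$ is local on $X$, so we may assume $\cG\cong\cO_{X}^{\oplus r}$; using that $j_{*}$ commutes with finite direct sums and with restriction to open subschemes (the latter again reducing to flat base change over $Y$), the assertion follows from the case $\cG=\cO_{X}$ just treated.

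The only point that requires care is that $X$ is not assumed to be noetherian, or even quasi-compact, so local cohomology over $X$ is not a convenient tool; I would instead keep everything sheaf-theoretic via $j_{*}j^{*}$ and appeal to the flat base change theorem for quasi-coherent cohomology over the locally noetherian base $Y$, which carries no finiteness hypothesis on the source. I do not anticipate any further obstacle.
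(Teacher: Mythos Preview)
Your proof is correct and follows essentially the same route as the paper: reduce via internal Hom to showing $\Gamma(X,\cG)\to\Gamma(f^{-1}(U),\cG)$ is bijective, use algebraic Hartogs on the normal base $Y$ to get $\cO_{Y}\isom (j_{Y})_{*}\cO_{U}$, and transport this to $X$ by flat base change (using that $j_{Y}$ is qcqs since $Y$ is locally noetherian). The only cosmetic difference is that the paper handles a general vector bundle $\cE$ in one line via the projection formula $i_{*}i^{*}\cE\cong\cE\otimes i_{*}\cO_{f^{-1}(U)}$, whereas you localize on $X$ to reduce to $\cG=\cO_{X}^{\oplus r}$; both are standard and your appeal to $R^{1}(j_{Y})_{*}\cO_{U}=0$ is not needed.
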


\begin{proof}
    By taking internal homomorphisms, the problem is reduced to showing that, for a vector bundle $\cE$ on $X$, the restriction map
    \[
    \Gamma(X,\cE)\to \Gamma(f^{-1}(U),\cE)
    \]
    is an isomorphism. Let $i\colon f^{-1}(U)\hookrightarrow X$ and $j\colon U\hookrightarrow Y$ be natural open immersions. Then we have isomorphisms of $\cO_{X}$-modules
    \begin{align*}
        i_{*}i^{*}\cE\cong \cE\otimes i_{*}\cO_{f^{-1}(U)}\cong \cE\otimes f^{*}j_{*}\cO_{U}\cong \cE\otimes f^{*}\cO_{Y}\cong \cE,
    \end{align*}
    where the first isomorphism is the projection formula, the second one is the flat base change, and the third one follows from the assumption that $U$ is an open subset of a locally noetherian normal scheme $Y$ containing all points of codimension $1$. Taking global sections on both sides, we obtain the statement.
\end{proof}

\begin{prop}\label{purity for hom of kfl vect bdle}
    Let $f\colon (X,\cM_{X})\to (Y,\cM_{Y})$ be a strict flat morphism from a (not necessarily locally noetherian) fs log scheme $(X,\cM_{X})$ to a log regular log scheme $(Y,\cM_{Y})$. Let $U$ be a dense open subset of $Y$ containing all points of codimension $1$. Then the restriction functor
    \[
    \mathrm{Vect}_{\mathrm{kfl}}(X,\cM_{X})\to \mathrm{Vect}_{\mathrm{kfl}}(f^{-1}(U),\cM_{f^{-1}(U)})
    \]
    is fully faithful, where $\cM_{f^{-1}(U)}$ is the pullback log structure of $\cM_{X}$.
\end{prop}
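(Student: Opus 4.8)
The plan is to deduce this log/kfl purity statement from the classical purity of Lemma~\ref{lem on purity for morphisms of kfl vect bdles} by passing to a Kummer covering over which the kfl vector bundles become classical (so that Lemma~\ref{lem on purity for morphisms of kfl vect bdles}, which only sees usual vector bundles, can be applied). First I would reduce, by passing to internal homomorphisms, to proving that for every kfl vector bundle $\cF$ on $(X,\cM_{X})$ the restriction
\[
\Gamma((X,\cM_{X})_{\mathrm{kfl}},\cF)\to \Gamma((f^{-1}(U),\cM_{f^{-1}(U)})_{\mathrm{kfl}},\cF)
\]
is an isomorphism, using that $\sHom_{\cO_{(X,\cM_{X})}}(\cE_{1},\cE_{2})$ is again a kfl vector bundle and commutes with restriction to the open subsite. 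Since $f$ is strict the question is \'{e}tale local on $Y$, so I may assume $Y$ affine and quasi-compact, carrying an fs chart $P\to\cM_{Y}$ that pulls back to a chart of $\cM_{X}$. By Lemma~\ref{kfl vect bdle is classical after n-power ext} there is an $n\ge 1$ such that, on the standard Kummer covering $g\colon(X',\cM_{X'})\coloneqq(X,\cM_{X})\otimes_{(\bZ[P],P)^{a}}(\bZ[P^{1/n}],P^{1/n})^{a}\to(X,\cM_{X})$, the pullback $g^{*}\cF$ is classical, say $g^{*}\cF=\iota(\cF_{0})$ for a usual vector bundle $\cF_{0}$ on $\underline{X'}$. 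Writing $(Y',\cM_{Y'})$ for the corresponding Kummer covering of $(Y,\cM_{Y})$, one has $(X',\cM_{X'})=(X,\cM_{X})\times_{(Y,\cM_{Y})}(Y',\cM_{Y'})$ strict over $(Y',\cM_{Y'})$, so $\underline{X'}=\underline{X}\times_{\underline{Y}}\underline{Y'}$ is flat over $\underline{Y'}$; likewise the fs fiber product $(X'',\cM_{X''})\coloneqq(X',\cM_{X'})\times_{(X,\cM_{X})}(X',\cM_{X'})$ equals $(X,\cM_{X})\times_{(Y,\cM_{Y})}(Y'',\cM_{Y''})$ with $Y''\coloneqq Y'\times_{Y}Y'$, it is strict over $(Y'',\cM_{Y''})$, and $\cF$ is still classical on it, being a pullback of $\iota(\cF_{0})$.

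The geometric heart is to check that the classical input is available over $\underline{Y'}$ and $\underline{Y''}$. Using Lemma~\ref{log reg stab} --- after reducing to strict local rings, where the chart may be taken sharp and $Y''$ is again a standard Kummer covering, now of $Y'$, and the maps $P\to P^{1/n}$ and $P^{1/n}\to P^{1/n}\oplus_{P}P^{1/n}$ are Kummer --- I would conclude that $(Y',\cM_{Y'})$ and $(Y'',\cM_{Y''})$ are log regular, hence $\underline{Y'}$ and $\underline{Y''}$ are locally noetherian and normal by Proposition~\ref{log reg}(2). Since $\underline{Y'}\to\underline{Y}$ and $\underline{Y''}\to\underline{Y}$ are finite and surjective, and $\underline{Y}\setminus U$ has codimension $\ge 2$, the preimages $U'\subset\underline{Y'}$ and $U''\subset\underline{Y''}$ of $U$ are dense open subsets (e.g.\ because these coverings are finite flat over the interior of $(Y,\cM_{Y})$, which meets $U$ in a dense open) whose complements again have codimension $\ge 2$, so $U'$ and $U''$ contain all points of codimension $1$; moreover the underlying scheme $\underline{W'}$ of $W'\coloneqq(X',\cM_{X'})\times_{(X,\cM_{X})}f^{-1}(U)$ is exactly the preimage of $U'$ under $\underline{X'}\to\underline{Y'}$, and similarly for $W''\coloneqq(X'',\cM_{X''})\times_{(X,\cM_{X})}f^{-1}(U)$.

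With this in hand the proof concludes formally. Lemma~\ref{lem on purity for morphisms of kfl vect bdles}, applied to the flat morphisms $\underline{X'}\to\underline{Y'}$ and $\underline{X''}\to\underline{Y''}$ with the dense opens $U'$ and $U''$, gives $\Gamma(\underline{X'},\cF_{0})\isom\Gamma(\underline{W'},\cF_{0})$ and $\Gamma(\underline{X''},\cF_{0})\isom\Gamma(\underline{W''},\cF_{0})$. On the other hand, since the kfl topology is subcanonical and $\{X'\to X\}$, $\{W'\to W\}$ are kfl coverings, $\Gamma((X,\cM_{X})_{\mathrm{kfl}},\cF)$ is the equalizer of the two restriction maps $\Gamma(X',\cF)\rightrightarrows\Gamma(X'',\cF)$, and similarly $\Gamma((W,\cM_{W})_{\mathrm{kfl}},\cF)$ is the equalizer of $\Gamma(W',\cF)\rightrightarrows\Gamma(W'',\cF)$; because $\cF$ is classical on $X',X'',W',W''$ these $\Gamma$'s are the groups of global sections of $\cF_{0}$ on the corresponding underlying schemes, and the isomorphisms just produced identify the two equalizer diagrams. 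Hence $\Gamma((X,\cM_{X})_{\mathrm{kfl}},\cF)\isom\Gamma((W,\cM_{W})_{\mathrm{kfl}},\cF)$, which is what we wanted, and the Proposition follows.

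I expect the main obstacle to be the second paragraph: verifying that the Kummer covering of the log regular base $(Y,\cM_{Y})$ is again log regular (so that its underlying scheme is normal and locally noetherian, allowing classical purity to be invoked) and that the codimension-$\ge 2$ condition on $Y\setminus U$ is preserved under pullback along the finite surjective maps $\underline{Y'}\to\underline{Y}$ and $\underline{Y''}\to\underline{Y}$, including density of the preimages. The internal-hom reduction, the \'{e}tale-local reduction afforded by the strictness of $f$, and the kfl descent at the end are all routine.
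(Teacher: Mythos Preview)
Your overall strategy---reduce to a Kummer cover where the bundles become classical, invoke Lemma~\ref{lem on purity for morphisms of kfl vect bdles}, and descend---is exactly the paper's, and most of the details (the internal-hom reduction, the strictness allowing $(X',\cM_{X'})=(X,\cM_X)\times_{(Y,\cM_Y)}(Y',\cM_{Y'})$, the kfl-descent equalizer at the end) are fine. The reduction to $Y$ strict local and the verification that $(Y',\cM_{Y'})$ is log regular via Lemma~\ref{log reg stab} also match the paper.

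The gap is your treatment of the double self-product. You claim that $(Y'',\cM_{Y''})$ is log regular by applying Lemma~\ref{log reg stab} to the Kummer map $P^{1/n}\to P^{1/n}\oplus_P^{\mathrm{sat}}P^{1/n}$, but this saturated pushout is $P^{1/n}\times G$ with $G=(P^{1/n})^{\mathrm{gp}}/P^{\mathrm{gp}}$ a nontrivial finite group, so that $Y''\cong Y'\times_{\mathrm{Spec}\,\bZ}\mathrm{Spec}\,\bZ[G]$. When the residue characteristic of $Y$ divides $|G|$ (which you cannot exclude), $\bZ[G]$ is not reduced over that residue field, so $\underline{Y''}$ is not reduced, hence not normal, and certainly not log regular; Lemma~\ref{lem on purity for morphisms of kfl vect bdles} therefore cannot be applied to $\underline{X''}\to\underline{Y''}$. (This also shows that Lemma~\ref{log reg stab}, as you invoke it, cannot literally apply to monoids with torsion in their group completion.)

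The paper sidesteps this entirely by applying Lemma~\ref{lem on purity for morphisms of kfl vect bdles} to both $X'\to Y'$ \emph{and} $X''\to Y'$, with the single open $U'\subset Y'$. This works because $Y''\to Y'$ is finite free (it is base-changed from $\mathrm{Spec}\,\bZ[G]\to\mathrm{Spec}\,\bZ$), so $X''=X'\times_{Y'}Y''\to X'\to Y'$ is flat; and $Y'$ is genuinely log regular (here $Q=P^{1/n}$ is sharp), hence normal by Proposition~\ref{log reg}(2). Replacing your ``$X''\to Y''$, $U''\subset Y''$'' by ``$X''\to Y'$, $U'\subset Y'$'' repairs your argument completely and brings it in line with the paper's proof.
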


\begin{proof}
    Let $\cE_{1},\cE_{2}$ be kfl vector bundles on $(X,\cM_{X})$. We shall prove that the restriction map
    \[
    \mathrm{Hom}(\cE_{1},\cE_{2})\to \mathrm{Hom}(\cE_{1}|_{f^{-1}(U)},\cE_{2}|_{f^{-1}(U)})
    \]
    is an isomorphism. By the limit argument (cf.~\cite[Appendix]{ino23}), we may assume that $Y$ is a spectrum of a strict local ring. Let $y\in Y$ be a unique closed point. Take a chart $P\to \cM_{Y}$ such that $P\to \cM_{Y,y}/\cO_{Y,y}^{\times}$ is an isomorphism. By Lemma \ref{kfl vect bdle is classical after n-power ext}, we can take an integer $n\geq 1$ such that the pullback of $\cE_{i}$ to $(X',\cM_{X'})\coloneqq (X,\cM_{X})\otimes_{(\bZ[P],P)^{a}} (\bZ[P^{1/n}],P^{1/n})^{a}$ is classical for $i=1,2$. Let $(Y',\cM_{Y'})\coloneqq (Y,\cM_{Y})\otimes_{(\bZ[P],P)^{a}} (\bZ[P^{1/n}],P^{1/n})^{a}$. By Lemma \ref{log reg stab}, $(Y',\cM_{Y'})$ is log regular, and so $Y'$ is normal by Proposition \ref{log reg}(2). Let $(X'',\cM_{X''})$ denote the self-saturated fiber product of $(X',\cM_{X'})$ over $(X,\cM_{X})$. Let $V'$ (resp. $V''$) (resp. $U'$) be the preimage of $U$ in $X'$ (resp. $X''$) (resp. $Y'$). Since $Y'\to Y$ corresponds to an integral extension of normal domains by Lemma \ref{log reg}(2) and the separatedness of $\cO_{(Y,\cM_{Y})}$, $U'$ is also a dense open subset of $Y'$ containing all points of codimension $1$. Applying Lemma \ref{lem on purity for morphisms of kfl vect bdles} to flat and qcqs morphisms of schemes $X'\to Y'$ and $X''\to Y'$ and the open subset $U'\subset Y'$, we conclude that the restriction maps
    \begin{align*}
        \mathrm{Hom}(\cE_{1}|_{(X',\cM_{X'})},\cE_{2}|_{(X',\cM_{X'})})&\to \mathrm{Hom}(\cE_{1}|_{(V',\cM_{V'})},\cE_{2}|_{(V',\cM_{V'})}), \\
        \mathrm{Hom}(\cE_{1}|_{(X'',\cM_{X''})},\cE_{2}|_{(X'',\cM_{X''})})&\to \mathrm{Hom}(\cE_{1}|_{(V'',\cM_{V''})},\cE_{2}|_{(V'',\cM_{V''})})
    \end{align*}
    are isomorphisms. Therefore, the claim follows from kfl descent. 
\end{proof}

\begin{cor}[Purity for homomorphisms of weak log fnite group schemes]\label{purity for hom of weak log fin grp}
Under the assumption of Proposition \ref{purity for hom of kfl vect bdle}, the restriction functor
\[
\mathrm{wFin}(X,\cM_{X})\to \mathrm{wFin}(f^{-1}(U),\cM_{f^{-1}(U)})
\]
is fully faithful.
\end{cor}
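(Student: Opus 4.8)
The plan is to reduce everything to Proposition~\ref{purity for hom of kfl vect bdle} by passing to coordinate rings. By Lemma~\ref{str sheaf of log fin grp sch}, a weak log finite group scheme $G$ over $(X,\cM_{X})$ corresponds to a Hopf algebra object $\cA_{G}$ of the monoidal category $\mathrm{Vect}_{\mathrm{kfl}}(X,\cM_{X})$, and a homomorphism $G_{1}\to G_{2}$ corresponds to a morphism $\cA_{G_{2}}\to \cA_{G_{1}}$ in $\mathrm{Vect}_{\mathrm{kfl}}(X,\cM_{X})$ compatible with the multiplication, unit, comultiplication, and counit (compatibility with the antipode being automatic for bialgebra maps). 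Since the formation of $\cA_{G}$ commutes with pullback, restriction of homomorphisms along $(f^{-1}(U),\cM_{f^{-1}(U)})\hookrightarrow (X,\cM_{X})$ translates into restriction of the corresponding morphisms of Hopf algebra objects. So the problem becomes: the restriction functor on Hopf algebra objects of $\mathrm{Vect}_{\mathrm{kfl}}$ is fully faithful.

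For faithfulness I would simply observe that a morphism of the underlying kfl vector bundles is determined by its restriction to $f^{-1}(U)$ by the faithfulness half of Proposition~\ref{purity for hom of kfl vect bdle}; hence two homomorphisms of weak log finite group schemes that agree over $f^{-1}(U)$ already agree over $(X,\cM_{X})$.

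For fullness, given a homomorphism over $f^{-1}(U)$, I would first apply the fullness half of Proposition~\ref{purity for hom of kfl vect bdle} to extend the underlying morphism $\cA_{G_{2}}|_{f^{-1}(U)}\to \cA_{G_{1}}|_{f^{-1}(U)}$ to a morphism $\varphi\colon \cA_{G_{2}}\to \cA_{G_{1}}$ in $\mathrm{Vect}_{\mathrm{kfl}}(X,\cM_{X})$, and then check that $\varphi$ is automatically a morphism of Hopf algebra objects. The point is that each of the finitely many defining compatibilities is an equality of two morphisms in $\mathrm{Vect}_{\mathrm{kfl}}(X,\cM_{X})$ between tensor powers of $\cA_{G_{1}}$ and $\cA_{G_{2}}$, which again lie in $\mathrm{Vect}_{\mathrm{kfl}}(X,\cM_{X})$ because this category is monoidal; the two morphisms agree after restriction to $f^{-1}(U)$ since the given homomorphism over $f^{-1}(U)$ is a genuine group scheme homomorphism, so the faithfulness half of Proposition~\ref{purity for hom of kfl vect bdle} applied to those tensor powers forces the compatibilities over $(X,\cM_{X})$.

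I expect the only point requiring care — more bookkeeping than a real obstacle — to be this last verification that the extended morphism of kfl vector bundles respects the Hopf algebra structure; once phrased as equalities of morphisms between kfl vector bundles it is a formal consequence of Proposition~\ref{purity for hom of kfl vect bdle}. Everything else is a straightforward transport along Lemma~\ref{str sheaf of log fin grp sch}.
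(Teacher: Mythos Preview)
Your proposal is correct and is exactly the argument the paper has in mind: its proof is the single sentence ``This follows from Lemma~\ref{str sheaf of log fin grp sch} and Proposition~\ref{purity for hom of kfl vect bdle},'' and you have simply unpacked that sentence. The one subtlety you flagged---that the extended vector-bundle map automatically respects the Hopf structure because each compatibility is an equality of morphisms in $\mathrm{Vect}_{\mathrm{kfl}}$ detectable over $f^{-1}(U)$---is precisely the content left implicit in the paper's one-liner.
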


\begin{proof}
    This follows from Lemma \ref{str sheaf of log fin grp sch} and Proposition \ref{purity for hom of kfl vect bdle}.
\end{proof}

\begin{thm}\label{mainthm fin ver}
    Let $n\geq 1$ be an integer. Let $(X,\cM_{X})$ be an fs log scheme defined by a locally noetherian regular scheme $X$ with a normal crossings divisor $D$, and $A$ be a semi-abelian scheme over $X$. Let $U\coloneqq X\backslash D$. Suppose that $A_{U}\coloneqq A\times_{X} U$ is an abelian scheme over $U$ and that $D\otimes_{\bZ} \bZ[1/n]$ is dense in $D$. Then the finite flat group scheme $A_{U}[n]$ over $U$ uniquely extends to a log finite group scheme $A[n]^{\mathrm{log}}$ over $(X,\cM_{X})$.
\end{thm}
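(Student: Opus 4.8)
The plan is to deduce the theorem from the discrete valuation ring case (Proposition~\ref{mainthm dvr case}) and the complete local case, via a spreading-out argument followed by strict fpqc descent from the completion, exactly along the lines sketched in the introduction; throughout, $(X,\cM_X)$ is log regular.

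\textbf{Uniqueness.} Let $G_1,G_2$ be log finite group schemes over $(X,\cM_X)$ restricting to $A_U[n]$ over $U$, giving an isomorphism $\psi\colon G_1|_U\isom G_2|_U$. At each generic point $\eta$ of $D$ the local ring $\cO_{X,\eta}$ is a discrete valuation ring in which $n$ is invertible (this is where the hypothesis that $D\otimes_{\bZ}\bZ[1/n]$ is dense in $D$ enters), so by Lemma~\ref{fully faithfulness dvr case}(1) the isomorphism $\psi$ extends uniquely over $\Spec\cO_{X,\eta}$. By the limit argument these extensions glue with $\psi$ to an isomorphism $G_1|_V\isom G_2|_V$ over an open $V$ containing $U$ and every generic point of $D$, so that $X\setminus V$ has codimension $\geq 2$. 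Corollary~\ref{purity for hom of weak log fin grp} applied to $\mathrm{id}_{(X,\cM_X)}$ and $V$ then extends this to an isomorphism $G_1\isom G_2$ compatible with $\psi$. In particular an extension is unique once it exists, so the construction is local on $X$, and reducing to the affine case we may assume $X=\Spec R$ with $R$ a noetherian regular local ring.

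\textbf{Spreading out.} For each generic point $\eta$ of $D$, Proposition~\ref{mainthm dvr case}(1) gives an extension of $A_U[n]$ to a log finite group scheme over $(\Spec\cO_{X,\eta},\cM_{\cO_{X,\eta}})$. By the limit argument these descend to open neighborhoods of the $\eta$ and, after shrinking, glue with $A_U[n]$ over $U$ to a log finite group scheme $A_V[n]^{\mathrm{log}}$ over $(V,\cM_V)$ with $U\subseteq V\subseteq X$ and $\mathrm{codim}_X(X\setminus V)\geq 2$. It remains to extend $A_V[n]^{\mathrm{log}}$ across the codimension-$\geq 2$ locus.

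\textbf{Descent from the completion.} Let $\widehat R$ be the completion of $R$. Then $\widehat R$ is a complete regular local ring, $D_{\widehat R}$ is a normal crossings divisor on $\Spec\widehat R$, and, since $\Spec\widehat R\to\Spec R$ is flat, the preimage $\widehat V$ of $V$ is a dense open of $\Spec\widehat R$ containing all codimension-$1$ points. By Theorem~\ref{log deg theory} the semi-abelian scheme $A_{\widehat R}\in\mathrm{DEG}(\Spec\widehat R,\widehat U)$ corresponds to a log $1$-motive $\cQ_{\mathrm{log}}$ over $(\Spec\widehat R,\cM_{\widehat R})$, and by Proposition~\ref{log fin ass to log 1mot}(2) together with Proposition~\ref{compatibility for torsion point} the object $A_{\widehat R}[n]^{\mathrm{log}}\coloneqq\cQ_{\mathrm{log}}[n]$ is a log finite group scheme extending $A_{\widehat U}[n]$; applying the uniqueness already proved to $(\widehat V,\cM_{\widehat V})$ identifies $A_{\widehat R}[n]^{\mathrm{log}}|_{\widehat V}$ canonically with the pullback of $A_V[n]^{\mathrm{log}}$. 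The morphism $(\Spec\widehat R,\cM_{\widehat R})\to(X,\cM_X)$ is a strict fpqc cover; on $T\coloneqq\Spec(\widehat R\otimes_R\widehat R)$ the two projections to $\Spec\widehat R$ agree after composition with $\Spec\widehat R\to\Spec R$, so over the preimage $T_V$ of $V$ the two pullbacks of $A_{\widehat R}[n]^{\mathrm{log}}$ both equal the pullback of $A_V[n]^{\mathrm{log}}$, yielding a canonical isomorphism $\varphi_0$ between them over $T_V$. The first projection $T\to\Spec\widehat R$ is strict and flat, so by Corollary~\ref{purity for hom of weak log fin grp} the restriction functor $\mathrm{wFin}(T,\cM_T)\to\mathrm{wFin}(T_V,\cM_{T_V})$ is fully faithful; hence $\varphi_0$ extends uniquely to an isomorphism $\varphi$ over $T$, and the same full faithfulness over $T$ and over the triple product $\Spec(\widehat R\otimes_R\widehat R\otimes_R\widehat R)$ shows $\varphi$ satisfies the cocycle condition (both identities hold over the preimage of $V$, where $\varphi$ is the tautological descent datum of a pullback, hence everywhere). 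By strict fpqc descent for log finite group schemes (\cite[Theorem 7.1 and Theorem 8.1]{kat21}, which applies to fpqc covers as well, cf.\ the proof of Proposition~\ref{bl gluing for log fin grp}), the pair $(A_{\widehat R}[n]^{\mathrm{log}},\varphi)$ descends to a log finite group scheme $A[n]^{\mathrm{log}}$ over $(X,\cM_X)$, whose restriction to $V$ is $A_V[n]^{\mathrm{log}}$ and hence to $U$ is $A_U[n]$.

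\textbf{Main obstacle.} The real content lies in two inputs already in hand: the complete local case, built on the reinterpretation of Mumford's degeneration theory in terms of log $1$-motives (Theorem~\ref{log deg theory}, Proposition~\ref{log fin ass to log 1mot}), and the purity for homomorphisms of weak log finite group schemes over possibly non-noetherian bases (Corollary~\ref{purity for hom of weak log fin grp}). Given these, the delicate step in assembling the proof is to verify that the descent datum $\varphi$ over $T=\Spec(\widehat R\otimes_R\widehat R)$ — a ring that is in general neither noetherian nor log regular — is well defined, an isomorphism, and a cocycle; this is exactly what the non-noetherian form of purity is designed to supply.
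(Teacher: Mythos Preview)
Your proof is correct and follows essentially the same route as the paper's: first establish uniqueness via Lemma~\ref{fully faithfulness dvr case}(1) at the generic points of $D$ plus purity (Corollary~\ref{purity for hom of weak log fin grp}), then spread out to a codimension-$\geq 2$ open $V$ via Proposition~\ref{mainthm dvr case}(1), construct the extension over the completion using the log $1$-motive from Theorem~\ref{log deg theory}, and descend along the strict fpqc cover $\Spec\widehat R\to\Spec R$ using purity to produce the descent datum. The only cosmetic difference is that you apply Corollary~\ref{purity for hom of weak log fin grp} to the projection $T\to\Spec\widehat R$ (with base $\Spec\widehat R$ log regular and open $\widehat V$), whereas the paper applies it to $X''\to X$ (with base $X$ and open $V$); both are valid invocations, and your explicit remark on the cocycle condition over the triple product makes a point the paper leaves implicit.
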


\begin{proof}
First, we prove the following claim: for weak log finite group schemes $G_{1}$ and $G_{2}$ over $(X,\cM_{X})$ killed by $n$, the restriction map
\[
\mathrm{Hom}(G_{1},G_{2})\to \mathrm{Hom}(G_{1}|_{U},G_{2}|_{U})
\]
is an isomorphism. Take a homomorphism $f_{U}\colon G_{1}|_{U}\to G_{2}|_{U}$. By the assumption, $n$ is invertible in $\cO_{X,\eta}$ for each generic point $\eta$ of $D$. Hence, there exist an open subset $V$ of $X$ containing $U$ with $\mathrm{codim}_{X}(X\backslash V)\geq 2$ and an extension $f_{V}\colon G_{1}|_{V}\to G_{2}|_{V}$ of $f_{U}$ by Lemma \ref{fully faithfulness dvr case}(1) and the limit argument. Then $f_{V}$ uniquely extends to a homomorphism $f\colon G_{1}\to G_{2}$ by Corollary \ref{purity for hom of weak log fin grp}. This argument also shows that $f$ is a unique extension of $f_{U}$.

We turn to proving Theorem \ref{mainthm fin ver}. Since the uniqueness of the extension follows from the claim in the previous paragraph, we may work Zariski locally on $X$. Hence, the limit argument allows us to assume that $X=\mathrm{Spec}(R)$ for a local ring $R$. By the assumption, $n$ is invertible in $\cO_{X,\eta}$ for each generic point $\eta$ of $D$. Hence, there exist an open subset $V$ of $X$ containing $U$ with $\mathrm{codim}_{X}(X\backslash V)\geq 2$ and a log finite group scheme $A_{V}[n]^{\mathrm{log}}$ over $(V,\cM_{V})$ extending $A_{U}[n]$ by Proposition \ref{mainthm dvr case}(1) and the limit argument. It is enough to extend $A_{V}[n]^{\mathrm{log}}$ to a log finite group scheme over $(X,\cM_{X})$. Let $X'\coloneqq \mathrm{Spec}(\widehat{R})$ and $X''\coloneqq X'\times_{X} X'$. Let $p_{i}\colon (X'',\cM_{X''})\to (X',\cM_{X'})$ be the projection maps for $i=1,2$. Let $U'$ (resp.~$V'$) denotes the preimage of $U$ (resp.~$V$) via $X'\to X$. Let $\cQ_{\mathrm{log}}$ be the object of $\mathrm{DD}^{\mathrm{log}}(X',U')$ corresponding to $A\times_{X} X'\in \mathrm{DEG}(X',U')$. By Proposition \ref{compatibility for torsion point}, the log finite group scheme $\cQ_{\mathrm{log}}[n]$ is an extension of $(A\times_{X} U')[n]$. By the claim in the previous paragraph, we have an isomorphism $\cQ_{\mathrm{log}}[n]|_{V'}\cong A_{V}[n]^{\mathrm{log}}|_{V'}$. Applying Corollary \ref{purity for hom of weak log fin grp} to the strict flat map $(X'',\cM_{X''})\to (X,\cM_{X})$ and the open set $V\subset X$, we obtain a unique isomorphism $p_{1}^{*}\cQ_{\mathrm{log}}[n]\cong p_{2}^{*}\cQ_{\mathrm{log}}[n]$ extending the isomorphism
\[
(p_{1}^{*}\cQ_{\mathrm{log}}[n])|_{V''}\cong A_{V}[n]^{\mathrm{log}}|_{V''}\cong 
p_{2}^{*}\cQ_{\mathrm{log}}[n]|_{V''},
\]
where $V''$ is the preimage of $V$ via $X''\to X$. This defines descent datum of $\cQ_{\mathrm{log}}[n]$, and we obtain a log finite group scheme $A[n]^{\mathrm{log}}$ extending $A_{V}[n]^{\mathrm{log}}$ by strict fpqc descent for finite Kummer log flat schemes (\cite[Theorem 7.1 and Theorem 8.1]{kat21}). This finishes the proof.
\end{proof}

\begin{thm}\label{mainthm BT ver}
    Let $(X,\cM_{X})$ be an fs log scheme defined by a locally noetherian regular scheme $X$ with a normal crossings divisor $D$, and $A$ be a semi-abelian scheme over $X$. Let $U\coloneqq X\backslash D$. Suppose that $A_{U}$ is an abelian scheme over $U$. Then the $p$-divisible group $A_{U}[p^{\infty}]$ over $U$ uniquely extends to a log $p$-divisible group $A^{\mathrm{log}}[p^{\infty}]$ over $(X,\cM_{X})$. 
\end{thm}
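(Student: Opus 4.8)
The plan is to follow almost verbatim the proof of Theorem \ref{mainthm fin ver}, replacing $n$-torsion by $p$-power torsion throughout. The point that makes the statement hold \emph{without} any invertibility or density hypothesis (which was needed in Theorem \ref{mainthm fin ver}) is that the full faithfulness input over discrete valuation rings is now Lemma \ref{fully faithfulness dvr case}(2), the log version of the Tate--de Jong theorem, which holds in arbitrary residue characteristic. \emph{Step 1 (full faithfulness of restriction to $U$, hence uniqueness).} I first prove that for log $p$-divisible groups $G_{1},G_{2}$ over $(X,\cM_{X})$ the restriction map $\Hom(G_{1},G_{2})\to\Hom(G_{1}|_{U},G_{2}|_{U})$ is bijective. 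Since $\Hom(G_{1},G_{2})=\varprojlim_{n}\Hom(G_{1}[p^{n}],G_{2}[p^{n}])$ and similarly over $U$, it is enough to treat each $p^{n}$-torsion. Given $f_{U}$, for every generic point $\eta$ of $D$ the local ring $\cO_{X,\eta}$ is a discrete valuation ring by Proposition \ref{log reg}(2), carrying the log structure defined by its closed point by Lemma \ref{log reg dvr}; Lemma \ref{fully faithfulness dvr case}(2) extends $f_{U}$ across $\Spec\cO_{X,\eta}$, and the limit argument produces an extension $f_{V}$ over an open $V\supseteq U$ with $\mathrm{codim}_{X}(X\setminus V)\geq 2$. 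As $V$ then contains every point of codimension $\leq 1$, Corollary \ref{purity for hom of weak log fin grp} applied to $\mathrm{id}_{(X,\cM_{X})}$ and $V$ extends $f_{V}$ uniquely to $f\colon G_{1}\to G_{2}$; the same statement gives uniqueness. In particular the extension asserted in the theorem is unique once it exists.

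\emph{Step 2 (reduction and extension over a big open).} By the uniqueness just proved I may work Zariski-locally and, via the limit argument, assume $X=\Spec R$ with $R$ a regular local ring. Applying Proposition \ref{mainthm dvr case}(2) at each generic point $\eta$ of $D$, spreading out, and gluing the resulting local extensions of $A_{U}[p^{\infty}]$ by means of Step 1, I obtain an open $V\supseteq U$ with $\mathrm{codim}_{X}(X\setminus V)\geq 2$ together with a log $p$-divisible group $A_{V}[p^{\infty}]^{\mathrm{log}}$ over $(V,\cM_{V})$ extending $A_{U}[p^{\infty}]$. It remains to extend $A_{V}[p^{\infty}]^{\mathrm{log}}$ to $(X,\cM_{X})$.

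\emph{Step 3 (completion, log $1$-motive, descent).} Put $X'=\Spec\widehat{R}$ with $\widehat{R}$ the completion of $R$ (a complete regular local ring), and $X''=X'\times_{X}X'$ with projections $p_{1},p_{2}\colon(X'',\cM_{X''})\to(X',\cM_{X'})$; both $X'\to X$ and $X''\to X$ are strict faithfully flat. By Theorem \ref{log deg theory}, $A\times_{X}X'\in\mathrm{DEG}(X',U')$ corresponds to a log $1$-motive $\cQ_{\mathrm{log}}$ over $(X',\cM_{X'})$, and by Proposition \ref{log fin ass to log 1mot}(4) the union $\cQ_{\mathrm{log}}[p^{\infty}]\coloneqq\bigcup_{n}\cQ_{\mathrm{log}}[p^{n}]$ is a log $p$-divisible group over $(X',\cM_{X'})$; Proposition \ref{compatibility for torsion point} gives $\cQ_{\mathrm{log}}[p^{\infty}]|_{U'}\cong(A\times_{X}U')[p^{\infty}]$. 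Since $(X',\cM_{X'})$ is again defined by a regular scheme with a normal crossings divisor, the argument of Step 1 applies over $(V',\cM_{V'})$ and yields an isomorphism $\cQ_{\mathrm{log}}[p^{\infty}]|_{V'}\cong A_{V}[p^{\infty}]^{\mathrm{log}}|_{V'}$. Pulling this back along $p_{1}$ and $p_{2}$, and using that the composite $V''\to V'\to V$ is independent of the projection, I obtain $p_{1}^{*}\cQ_{\mathrm{log}}[p^{\infty}]|_{V''}\cong A_{V}[p^{\infty}]^{\mathrm{log}}|_{V''}\cong p_{2}^{*}\cQ_{\mathrm{log}}[p^{\infty}]|_{V''}$. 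Corollary \ref{purity for hom of weak log fin grp}, applied to the strict flat morphism $(X'',\cM_{X''})\to(X,\cM_{X})$ and the open $V\subset X$ (which contains all points of codimension $\leq 1$), extends this uniquely to an isomorphism $\varphi\colon p_{1}^{*}\cQ_{\mathrm{log}}[p^{\infty}]\isom p_{2}^{*}\cQ_{\mathrm{log}}[p^{\infty}]$ over $X''$; the cocycle identity for $\varphi$ holds over the preimage of $V$ in $X'\times_{X}X'\times_{X}X'$, where all three pullbacks are the canonical isomorphism through $A_{V}[p^{\infty}]^{\mathrm{log}}$, hence holds everywhere by the uniqueness clause of Corollary \ref{purity for hom of weak log fin grp}. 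Strict fpqc descent for finite Kummer log flat log schemes (\cite[Theorem 7.1 and Theorem 8.1]{kat21}), applied level-by-level, then descends $\cQ_{\mathrm{log}}[p^{\infty}]$ to a log $p$-divisible group $A^{\mathrm{log}}[p^{\infty}]$ over $(X,\cM_{X})$ extending $A_{V}[p^{\infty}]^{\mathrm{log}}$, hence $A_{U}[p^{\infty}]$.

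\emph{Main obstacle.} All the genuinely hard content is imported: Mumford's degeneration theory in its log-$1$-motive formulation (Theorem \ref{log deg theory}), the purity statement Corollary \ref{purity for hom of weak log fin grp}, and the log Tate--de Jong theorem. Given these, the delicate points are bookkeeping: checking that full faithfulness of restriction to $U$ for \emph{$p$-divisible} groups needs no invertibility assumption (it does not, precisely because Lemma \ref{fully faithfulness dvr case}(2) is unconditional), and producing a genuine descent datum — which is why one passes first to the intermediate open $V$, so that the gluing isomorphism can be built through the common object $A_{V}[p^{\infty}]^{\mathrm{log}}$ before purity is invoked with target $(X,\cM_{X})$.
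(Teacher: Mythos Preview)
Your proof is correct and follows essentially the same strategy as the paper: extend across the generic points of $D$ via the log Tate--de Jong theorem (Lemma \ref{fully faithfulness dvr case}(2)), build the extension on the completion from the associated log $1$-motive, and descend using Corollary \ref{purity for hom of weak log fin grp} and strict fpqc descent. The only organizational difference is that the paper carries out the construction one level $p^{n}$ at a time---building each $A[p^{n}]^{\mathrm{log}}$ with prescribed restrictions to $U$ \emph{and} to every $\Spec\cO_{X,\eta}$, then assembling and verifying the $p$-divisible-group axioms on completions---thereby making explicit what you leave implicit in Steps 1--2, namely that every spreading-out/limit argument and every application of purity must be run at finite level, since a morphism of log $p$-divisible groups is not a finitely presented datum.
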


\begin{proof}
    Although almost all arguments of Theorem \ref{mainthm fin ver} work in this setting, we have to notice that we need to pass to finite levels when we use the limit argument. 

For each generic point $\eta$ of $D$, there exists a unique log $p$-divisible group $A_{\cO_{X,\eta}}[p^{\infty}]^{\mathrm{log}}$ over $(\mathrm{Spec}(\cO_{X,\eta}),\cM_{\cO_{X,\eta}})$ extending $A_{U}[p^{\infty}]|_{\mathrm{Spec}(K(\eta))}$ by Lemma \ref{mainthm dvr case}, where $K(\eta)$ denotes the fraction field of $\cO_{X,\eta}$. For every $n\geq 1$, by the same argument as Theorem \ref{mainthm fin ver}, there exists a unique log finite group $A[p^{n}]^{\mathrm{log}}$ over $(X,\cM_{X})$ with compatible isomorphisms $A[p^{n}]^{\mathrm{log}}|_{U}\cong A_{U}[p^{n}]$ and $A[p^{n}]^{\mathrm{log}}|_{(\mathrm{Spec}(\cO_{X,\eta}),\cM_{\cO_{X,\eta}})}\cong A_{\cO_{X,\eta}}[p^{n}]$ for each generic point $\eta$ of $D$.

Natural inclusion maps $A_{U}[p^{n}]\hookrightarrow A_{U}[p^{n+1}]$ and $A_{\cO_{X,\eta}}[p^{n}]\hookrightarrow A_{\cO_{X,\eta}}[p^{n+1}]$ uniquely extend to a homomorphism $A[p^{n}]^{\mathrm{log}}\to A[p^{n+1}]^{\mathrm{log}}$ for every $n\geq 1$ by the limit argument and Corollary \ref{purity for hom of weak log fin grp}. It is enough to check that $A[p^{\infty}]^{\mathrm{log}}\coloneqq \displaystyle \varinjlim_{n\geq 1}
    A[p^{n}]^{\mathrm{log}}$ is a log $p$-divisible group with
    \[
    A[p^{n}]^{\mathrm{log}}\cong \mathrm{Ker}(\times p^{n}\colon A[p^{\infty}]^{\mathrm{log}}\to A[p^{\infty}]^{\mathrm{log}}).
    \]
To check this, it suffices show that it is so after taking the base change to $(\mathrm{Spec}(\widehat{\cO}_{X,x}),\cM_{\widehat{\cO}_{X,x}})$ for every $x\in X$. Hence, we may assume that $X=\mathrm{Spec}(R)$ for a complete regular local ring $R$. Let $\cQ_{\mathrm{log}}$ be the object of $\mathrm{DD}^{\mathrm{log}}(X,U)$ corresponding to $A$ under the equivalence in Theorem \ref{log deg theory}. By the uniqueness of extension, $A[p^{n}]^{\mathrm{log}}$ is isomorphic to $\cQ_{\mathrm{log}}[p^{n}]$. Therefore, the claim follows from Proposition \ref{log fin ass to log 1mot}(3).
\end{proof}

\end{document}